\theoremstyle{plain}
\newtheorem{theorem}{Theorem}[section]
\newtheorem{lemma}[theorem]{Lemma}
\newtheorem{corollary}[theorem]{Corollary}
\newtheorem{proposition}[theorem]{Proposition}
\theoremstyle{remark}   
\newtheorem{definition}[theorem]{Definition}
\newtheorem{remark}[theorem]{Remark}
\numberwithin{equation}{section}
\newenvironment{assumption*}
 {\ifnum\value{subassumption}=0 \stepcounter{assumption}\fi\subassumption}
 {\endsubassumption}
\newenvironment{assumption+}[1]
 {\subassumption}
 {\endsubassumption}
\DeclareMathOperator*{\argmin}{arg\,min}
\newcommand{\eps}{\ensuremath{\varepsilon}}
   \newcommand{\R}{\mathbb{R}}
\renewcommand{\t}{^{\top}}
\newcommand{\inv}{^{-1}}
\newcommand{\E}{\mathbb{E}}
\newcommand{\p}{\mathbb{P}}
\newcommand{\ipq}{\mathbf{I}_{p,q}}
\newcommand{\wx}{\mathbf{W_*^X}}
\newcommand{\wy}{\mathbf{W_*^Y}}
\newcommand{\tr}{\textnormal{Tr}}
\newcommand{\supp}{\textnormal{supp}}
\newcommand{\aone}{\mathbf{A}^{(1)}}
\newcommand{\atwo}{\mathbf{A}^{(2)}}
\newcommand{\xhat}{\mathbf{\hat X}}
\newcommand{\yhat}{\mathbf{\hat Y}}
\newcommand{\xtilde}{\mathbf{\tilde X}}
\newcommand{\ytilde}{\mathbf{\tilde Y}}
\newcommand{\one}{^{(1)}}
\newcommand{\two}{^{(2)}}
\title{Nonparametric two-sample hypothesis testing for low-rank
random graphs of differing sizes}
\author{Joshua Agterberg, Minh Tang, and Carey Priebe}
\date{\today}
\begin{document}

\maketitle

\begin{abstract}
Given two networks of differing sizes, it is of interest to test whether the two networks belong to the same distribution.  We formalize the notion of ``equality of distribution'' under the framework of the generalized random dot product graph, which considers as special cases a number of popular network  models with low-rank expectations.  We then propose a nonparametric two-sample test statistic to conduct this test, assuming only that the networks have independent edges generated from low-rank probability matrices.  
Our proposed test statistic involves using the maximum mean discrepancy applied to suitably rotated rows of a graph embedding, 
where the rotation is estimated using optimal transport.  We show that our test statistic, appropriately scaled, is consistent for sufficiently dense graphs, and we study its convergence under different sparsity regimes, and our results are demonstrated in numerical simulations.  
\end{abstract}





\section{Introduction}
\label{sec:introduction}


Network data arises naturally in several fields, including neuroscience \citep{bullmore_complex_2009,bullmore_brain_2011,vogelstein_graph_2013,finn_functional_2015,priebe_two-truths_2019,arroyo-relion_network_2019} and social networks \citep{newman_random_2002,newman_modularity_2006,carrington_models_2005} among others. With the introduction of network data in the various sciences, there is a need for developing corresponding statistical methodology and theory.    
Often one wishes to determine whether or not two graphs exhibit similar distributional properties for some notion of similarity between distributions on networks.  Furthermore, as in classical statistics, one may wish to analyze graph data with only a few assumptions on the probability distributions. For example, for Euclidean data, given i.i.d.  observations $\{X_i\}_{i=1}^n $ and  $\{Y_j\}_{j=1}^m\in \R^d$ with cumulative distribution functions denoted $F_X$ and $F_Y$ respectively, a model-agnostic way to test whether $F_X = F_Y$ is use nonparametric methods, such as \citet{mood_asymptotic_1954,anderson_two-sample_1994,gretton_kernel_2012,szekely_energy_2013}, and \citet{chen_new_2017}.

For the two-sample test we consider, we take the perspective that a single network constitutes an observation. 
A number of works have studied hypothesis testing when the graphs are on the same vertex set (assuming the vertex matching is known \emph{a priori}) \citep{tang_semiparametric_2017,li_two-sample_2018,levin_central_2019,ghoshdastidar_two-sample_2020,draves_bias-variance_2020}, analogous to the ``matched pairs'' paradigm in Euclidean data.  However, in many settings, there is not necessarily an \emph{a priori} matching between vertices; for example, one could introduce and remove vertices without fundamentally altering the network structure. 

Explicitly, we consider observing two adjacency matrices $\aone \in \{0,1\}^{n\times n}$ and $\atwo \in \{0,1\}^{m\times m}$, and we consider testing the null hypothesis:
\begin{align*}
    H_0: \text{The distribution of $\aone$ is equal to the distribution of $\atwo$},
\end{align*}
where we only assume that $\mathbb{E} \aone$ (and $\E \atwo$) are low rank (see \cref{sec:preliminaries} for a formal description) and that the edges are drawn independently. By considering the latent-space model \citep{hoff_latent_2002} introduced in \citet{rubindelanchy_statistical_2022}, wherein each vertex has a latent vector in low-dimensional Euclidean space associated to it, we are able to consider testing whether two networks are ``equal in distribution,'' where our definition of equality of distribution is specific enough to allow for a meaningful notion of similarity between graphs on different vertex sets but general enough to allow for arbitrary low-rank graphs.  Under this framework, the model we consider herein includes stochastic blockmodels \citep{holland_stochastic_1983}, degree-corrected stochastic blockmodels, mixed-membership stochastic blockmodels, random dot product graphs \citep{athreya_statistical_2018}, and finite-rank graphons \citep{lovasz_large_2012} as special cases.  
Previous tests have primarily focused on specific submodels, such as the stochastic blockmodel or its variants \citep{lei_goodness--fit_2016,bickel_hypothesis_2016,fan_simple_2022}, or operate on general edge probability matrices \citep{ghoshdastidar_two-sample_2017}.  A more thorough discussion of related literature is in Section \ref{sec:previousresults}.

To discuss our main results, consider observing two networks on $n$ and $m$ vertices respectively, each with average expected degree of order $n\alpha_n$ and $m\beta_m$ respectively, where $\alpha_n,\beta_m \to 0$ as $n,m\to\infty$ or $\alpha_n,\beta_m \equiv  1$.  Our main contributions can be described as follows. 
\begin{itemize}
\item We consider testing whether two networks have equal distribution, and we formalize the notion of ``equality of distribution'' for networks with low-rank edge probability matrices.  
    \item We demonstrate that a modification of the test statistic considered in \citet{tang_nonparametric_2017} is consistent under the null and alternative for sparse networks (with sparsity $n\alpha_n \gg \textcolor{black}{\log^4(n)}, 
    m\beta_m \gg \textcolor{black}{\log^4(m)}$) 
    whose edge probability matrices may have negative and/or close eigenvalues (\cref{thm:consistency_repeated} and \cref{thm:centering}) in an asymptotic regime where each network is of comparable sparsity ($\frac{m\beta_m}{n\alpha_n + m\beta_m} \to \rho \in (0,1)$). In contrast to \citet{tang_nonparametric_2017}, our results hold for \emph{any} low-rank random graph models (satisfying suitable regularity conditions).
    \item We then show that the modified test statistic is consistent in an asymptotic regime where each network is sufficiently dense (with average expected degree growing faster than $\sqrt{n}\log(n)$), but the number of vertices are comparable $(\frac{m}{m+n} \to \rho \in (0,1)).$ 
        \item To compute the modified test statistic we provide an optimal transport algorithm, and we show that our algorithm converges when initialized sufficiently close to the global optimum.
    \item We further demonstrate a sort of ``uniform consistency'' of our test statistic under various fixed alternative hypotheses (see \cref{prop3} for details).
    \item We demonstrate the utility of our algorithm with simulated data.
\end{itemize}

The most similar to our work is \citet{tang_nonparametric_2017}, in which the
authors consider testing if two random dot product graph networks
have the same distribution. However, the theory developed in \citet{tang_nonparametric_2017} does not address the sparse network setting, nor does
it allow for negative eigenvalues or repeated eigenvalues of the edge
probability matrix and thus precludes testing under several commonly
used random graph models such as the balanced K-blocks SBM. In
this work we removed these limitations of \citet{tang_nonparametric_2017}. In particular we substantially extend their framework to address the indefinite
geometry induced by the negative eigenvalues, and our proposed test
procedure and its theoretical properties depend on a careful analysis of
the interplay between indefinite orthogonal transformations, optimal
transport, and convergence of degenerate U -statistics.  See \cref{sec:algorithm} and \cref{sec:previousresults} for more detailed discussions.

The paper is organized as follows. In 
Section \ref{sec:preliminaries}, we give the relevant definitions and describe our setting.  In Section \ref{sec:theory} we state our main  theoretical results for sparse, indefinite random graphs with negative and repeated eigenvalues and describe a modification to handle repeated eigenvalues.  In Section \ref{sec:sims} we show our results on simulated data, and in Section \ref{sec:discussion} we discuss our results.  Section \ref{sec:proofs} contains the proofs of our main results.  
\\ \ \\
\noindent
\textbf{Notation:} We use capital letters to denote random vectors $X \in \R^d$, bold  lowercase letters to denote fixed vectors, and bold capital letters for fixed or random matrices (which will be clear from context).  The distribution of a random vector $X$ will be denoted by $F_X$, and for $X_1, \dots, X_n$ i.i.d. some distribution $F_X$, we use $\mathbf{X}$ to denote the $n \times d$ matrix with its rows the vectors $X_1, \dots, X_n$.   In many occasions, given $X_1, \dots, X_n$ i.i.d. $F_X$, we let $X$ denote a realization from $F$ that is independent from $\{X_i\}_{i=1}^{n}$. We write $\|\cdot\|$ for the usual Euclidean norm on vectors and the spectral norm on matrices and $\|\cdot \|_F$ for the Frobenius norm.  For a matrix $\mathbf{M}$ \textcolor{black}{we write its trace as $\mathrm{Tr}(\mathbf{M})$} and its $\ell_2$ to $\ell_\infty$ operator norm via $\|\mathbf{M}\|_{2,\infty} \equiv \max_{i} \|\mathbf{M}_{i \cdot}\|$, where $\mathbf{M}_{i \cdot}$ are the rows of $\mathbf{M}$.  We use $\mathbf{I}_k$ to denote the $k\times k$ identity matrix.  For a matrix $\mathbf{M}$, the operator $\text{diag}(\mathbf{M})$ extracts its diagonal as a matrix, and for two matrices $\mathbf{M}$ and $\mathbf{N}$, the operator $\text{bdiag}(\mathbf{M,N})$ constructs the block-diagonal matrix $\begin{pmatrix}\mathbf{M} &0 \\0 & \mathbf{N} \end{pmatrix}.$ \textcolor{black}{For two nonnegative functions $f(n)$ and $g(n)$, } we write $f(n) = O(g(n))$ if there exists a constant $C >0$ such that $f(n) \leq C g(n)$ for all $n$ sufficiently large, and $f(n) = \omega(g(n))$ if there exists  a constant $c$ such that $cg(n) \leq f(n)$ for all $n$ sufficiently large.  We also write $f(n) \gg g(n)$ if $g(n)/ f(n) \to 0$ as $n\to\infty$.

\section{Preliminaries}
\label{sec:preliminaries}
In order to develop our hypothesis test we use the latent-space framework motivated by \citet{rubindelanchy_statistical_2022} to compare distributions between networks of different sizes.  
First, we discuss the notion of a $(p,q)$ admissible distribution, \textcolor{black}{which is crucial for our formulation of the hypothesis test}.  In what follows, the matrix $\ipq$ is defined as $\ipq:= \text{bdiag}(\mathbf{I}_p, -\mathbf{I}_q)$.
\begin{definition}
We say that $F_X$ with compact support $\Omega \subset \R^d$ is a $(p,q)$ admissible distribution if for all $\mathbf{x,y} \in \Omega $, $\mathbf{x\t I}_{p,q}\mathbf{ y} \in [0,1]$.
\end{definition}

\noindent
For a fixed $(p,q)$ admissible distribution, we consider the generalized random dot product graph as follows.

\begin{definition}[\citet{rubindelanchy_statistical_2022}] \label{def:grdpg} We say a graph $\mathbf{A} \in \{0,1\}^{n\times n}$ is a \emph{generalized random dot product graph on $n$ vertices} with $(p,q)$-admissible distribution $F_X$, sparsity factor $\alpha_n \textcolor{black}{\in (0,1]}$, and latent positions $\mathbf{X}$ if the matrix $\mathbf{A}$ is symmetric, and the entries $\mathbf{A}_{ij}$ are conditionally independent given $\mathbf{X}$ and Bernoulli random variables with $$\p(\mathbf{A}_{ij} = 1 | \mathbf{X}) = \alpha_n X_i\t \mathbf{I}_{p,q} X_j$$ with $\mathbf{A}_{ij} = \mathbf{A}_{ji}$, and $X_1, \dots, X_n \sim F_X$ are i.i.d.  We write $(\mathbf{A},\mathbf{X}) \sim GRDPG( F_X,n,\alpha_n)$.  
\end{definition}

The introduction of the matrix $\ipq$ is to model large in magnitude negative eigenvalues of the adjacency matrix.  We denote $d := p + q$ as the dimension of the latent Euclidean space associated to $F_X$.  Observe that
\begin{align*}
    \mathbb{E} \mathbf{A} | \mathbf{X} = \alpha_n \mathbf{X} \ipq \mathbf{X}\t,
\end{align*}
and hence that $\mathbb{E}\mathbf{A}| \mathbf{X}$ is rank at most $d$.  The GRDPG model allows for arbitrary low-rank models, so is sufficiently agnostic to provide a meaningful setting for inference. In a nonparametric setting, the parameters for the GRDPG model are simply the signature $(p,q)$, the sparsity parameter $\alpha_n$, and the distribution $F_X$ (which may or may not be parametric).  For this work, we assume the signature $(p,q)$ is known.  

There are a number of low-rank network models, all of which are GRDPGs:
\begin{itemize}
    \item \textbf{Stochastic Blockmodels}: The stochastic blockmodel \citep{holland_stochastic_1983} posits that each vertex $i$ belongs to one of $K$ communities, with $\p(\tau(i)=k) = \pi_k$, where $\tau(i): [n] \to [K]$ is the \emph{membership function} specifying the membership of each vertex.  Once communities are determined, edges are formed according to $\p(\mathbf{A}_{ij} = 1| \tau ) = \mathbf{B}_{\tau(i),\tau)j)}$, where $\mathbf{B} \in [0,1]^{K\times K}$ is a symmetric edge connectivity matrix.  
        To reformulate this model as a GRDPG, let $\mathbf{B} = \mathbf{VDV}\t$ denote its eigendecomposition, and let $F_X$ denote the distribution supported on the rows of $\mathbf{V |D|}^{1/2}$, with $\p(X_i = (\mathbf{V|D|}^{1/2}\big)_{k\cdot}) = \pi_k$. Let $p$ and $q$ denote the number of positive and negative eigenvalues of $\mathbf{B}$.  It is straightforward to check that this results in the equivalent distribution.
    \item \textbf{Degree-Corrected Stochastic Blockmodels}: Similar to the stochastic blockmodel, the Degree-Corrected Stochastic Blockmodel (DCSBM)\citet{karrer_stochastic_2011} posits that there are $K$ unique communities in addition to \emph{degree correction parameters} for each vertex, denoted $\theta_i$. Given  the $\theta_i$'s, the edge probabilities are determined by $\p(\mathbf{A}_{ij} = 1 | \{\theta_i\},\tau) = \theta_i \theta_j \mathbf{B}_{\tau(i)\tau(j)}$, where $\mathbf{B},\tau$ are as above.  
    Similar to the stochastic blockmodel, $F_X$ can be formed by providing a distribution on scalars to generate the $\theta_i$'s, and then assigning probability to latent positions defined via the eigendecomposition of $\mathbf{B}$ as above.  Alternatively, any distribution on the \emph{rays} with endpoints   
    defined by the rows of the eigendecomposition of $\mathbf{B}$ as in the previous example can yield an appropriate $F_X$.
    \item \textbf{Mixed-Membership Stochastic Blockmodels}:  This model relaxes the assumption that latent vectors belong to discrete communities, and instead requires only that total community memberships sum to unity \citet{airoldi_mixed_2008}.  Explicitly, given community membership vectors $\eta_i, \eta_j \in [0,1]^{K}$ such that $\sum_{k} (\eta_i)_k = 1$, edge probabilities are given by $\mathbb{P}(\mathbf{A}_{ij} = 1| \eta) = \eta_i\t \mathbf{B} \eta_j$.  Similar to the previous examples, a distribution $F_X$ can be obtained by allowing any distribution on the convex hull of the rows defined via the eigendecomposition of $\mathbf{B}$.  
    \item \textbf{Popularity-Adjusted Stochastic Blockmodels}: The popularity-adjusted stochastic blockmodel (PABM) \citep{sengupta_block_2018}  generalizes the degree-corrected stochastic blockmodel to allow for \emph{popularity parameters} for a given vertex within each community.  The $K$-block PABM is a special case of a GRDPG where $F_X$ is a mixture distribution of $K$-dimensional linear subspaces of $\mathbb{R}^{K^2}$ see \citet{koo_popularity_2023} for more details.   
    \item \textbf{Random Dot Product Graphs}: The random dot product graph \citet{athreya_statistical_2018} is simply a special case of the GRDPG with $p = d$ and $q = 0$.  
    \item \textbf{Finite-Rank Graphons}: A graphon is a symmetric function $f: \mathcal{X} \times \mathcal{X}\to [0,1]$, where $\mathcal{X}$ is some latent Euclidean space.  We say $f$ is finite-rank if its corresponding integral operator on $\mathcal{X}$ is of finite rank.  On can straightforwardly associate a finite-rank graphon to a GRDPG by considering the (functional) eigendecomposition of $f$; see \citet{lei_network_2020} for details.  
\end{itemize}
    Given two networks from any of the models above, the framework considered herein can test whether each network has the same distribution, even if one of the networks is a submodel of the other (e.g. the first network is an instantiation of a stochastic blockmodel and the second is an instantiation of a degree-corrected stochastic blockmodel). Therefore, a key feature of our proposed test is that it is \emph{universally consistent} for the class of rank $d$ network models.  

One potential issue with working with the GRDPG model is that it exhibits nonidentifiability.  To be explicit, suppose $\mathbf{Q}$ is a matrix such that $\mathbf{Q}\ipq \mathbf{Q}\t = \ipq$ (this is known as the \emph{indefinite orthogonal group} $\mathbb{O}(p,q)$).  Define the distribution $\tilde F_X := F_X \circ \mathbf{Q}$, where $F_X \circ \mathbf{Q}$ means that one generates $X_i \sim F_X$ and then left multiplies the vectors $X_i$ by $\mathbf{Q\t}$.  Then the probabilities of each edge are fixed since
\begin{align*}
    \p_{\tilde F_X} (\mathbf{A}_{ij} =1 | \mathbf{X} ) &= \alpha_n (\mathbf{Q\t} X_i)\t \ipq (\mathbf{Q}\t X_j) \\
    &= \alpha_n X_i\t (\mathbf{Q}\ipq \mathbf{Q\t}) X_j = \alpha_n X_i\t \ipq X_j = \p_{F_X} (\mathbf{A}_{ij} =1 | \mathbf{X} ).
\end{align*}
Hence, the distribution of the graph remains unchanged if one transforms the support of $F_X$ by any indefinite orthogonal transformation $\mathbf{Q}$. Therefore, any nonparametric test of equality of distribution must allow equality up to indefinite orthogonal transformations. This motivates the following definition.

\begin{definition}
Let $F_X$ and $F_Y$ be two $(p,q)$ admissible distributions.  We say $F_X$ and $F_Y$ are \emph{equivalent up to indefinite orthogonal transformation} if there exists a matrix $\mathbf{Q} \in \mathbb{O}(p,q)$ such that
\begin{align*}
    F_X = F_Y \circ \mathbf{Q}.  
\end{align*}
In this case, we write $F_X \simeq F_Y$.
\end{definition}


We are now ready to formally describe our hypothesis test under the generalized random dot product graph framework.
Suppose we observe two graph adjacency matrices $\aone$ and $\atwo$ such that $(\aone,\mathbf{X}) \sim GRDPG( F_X,n,\alpha_n)$ and $(\atwo,\mathbf{Y}) \sim GRDPG(F_Y,m,\beta_m)$ are mutually independent and have the same signature $(p,q)$.  We consider the hypothesis test
\begin{align*}
   &H_0: F_Y \simeq F_X  \\
   &H_A: F_Y \not \simeq F_X.
\end{align*}
Again, we assume throughout that $(p,q)$ is known and fixed in $n$ and $m$. A subtle point is that we assume that $(p,q)$ is the same for each graph -- if not, we simply reject the null hypothesis.  In addition, in general we do not assume $(\alpha_n,\beta_m)$ are known, but, for ease of exposition we shall first assume that they are known.  They can be estimated consistently, so we will revisit these issues later (see Corollary \ref{cor:scaling_estimate}).

In light of the generality of the hypothesis $F_Y \simeq F_X$, the additional nonidentifiability associated to the GRDPG requires carefully accounting for indefinite orthogonal matrices in deriving theoretical results, but still maintains a meaningful notion of similarity between networks in practical settings.  In \cref{thm:consistency_repeated}, we will show that this nonidentifiability can be eliminated in the sense that the results will hold up to \emph{orthogonal} (as opposed to \emph{indefinite orthogonal}) transformation.  
However, we emphasize that the proofs of our theoretical results require careful analyses of indefinite orthogonal matrices and their convergence, and are not just simple extensions of previous results.



In practice one observes only the graph adjacency matrix, and therefore must estimate the latent position matrix $\mathbf{X}$.  The statistical properties of the scaled eigendecomposition, referred to as the \emph{adjacency spectral embedding} (ASE), are investigated in \citet{rubindelanchy_statistical_2022}.  The definition is given below.

\begin{definition}[Adjacency Spectral Embedding] \label{def:ase} Suppose $(\mathbf{A},\mathbf{X})\sim GRDPG$ $( F_X,n,\alpha_n)$, and write the eigendecomposition of $\mathbf{A}$ as $\sum_{i=1}^{n} \lambda_i \mathbf{u_i u_i\t}$, where the $\lambda_i$ are ordered by magnitude; $|\lambda_1| \geq |\lambda_2| \geq \dots \geq |\lambda_n|$, and the $\mathbf{u_i}$ are orthonormal.  Form the $d\times d$ matrix $\mathbf{\Lambda_A}$ by taking the $d$ largest (in magnitude) eigenvalues of $\mathbf{A}$ sorted by positive and then negative components, and the $n \times d$ matrix $\mathbf{U_A}$ with columns consisting of the eigenvectors associated to the eigenvalues in $\mathbf{\Lambda_A}$.  The \emph{adjacency spectral embedding} of $\mathbf{A}$ is the $n \times d$ matrix $$\mathbf{\hat X : = U_A |\Lambda_A|^{1/2}},$$ where the operator $|\cdot|$ takes the absolute values of the entries.  
\end{definition}


\subsection{A Kernel Estimator}
\label{sec:gretton}
To describe our test statistic, we must first define \emph{mean embedding} of a distribution. Consider a symmetric positive-definite kernel $\kappa( \cdot, \cdot): \R^d \times \R^d \to \R$ with associated reproducing kernel Hilbert space $\mathcal{H}$.  Let $\mathcal{P}$ denote the set of probability distributions on $\R^d$.  The \textit{mean embedding} of a distribution function $F$ is defined via
\begin{align*}
\mu: \mathcal{P} &\to \mathcal{H} \\
    \mu[F](\cdot) &:= \mathbb{E} \kappa(\cdot, X), 
\end{align*}
\textcolor{black}{where the expectation is taken for the random variable $X$ with distribution function $F$.} Hence $\mu[F](\cdot)$ defines a function in $\mathcal{H}$, so that $\mu[\cdot]$ is a map from probability distributions to $\mathcal{H}$.
%
A kernel $\kappa$ is called \emph{characteristic} \citep{sriperumbudur_universality_2011}
if $\mu[\cdot]$ is injective, so that $F = G$ if and only if $\mu[F] = \mu[G]$. Examples of characteristic kernels include the Gaussian kernel $\kappa(x,y) = \exp\left(- \frac{1}{\sigma^2} \| x-y\|^2 \right)$ and the Laplace kernel $\kappa(x,y) = \exp\left(- \frac{1}{\sigma} \| x - y \|_1\right)$, \textcolor{black}{where $\|\cdot\|_1$ denotes the $\ell_1$ norm on vectors}.  We say $\kappa$ is \emph{radial} if $\kappa(x,y) = \kappa( \mathbf{W} x, \mathbf{W} y)$ for any orthogonal matrix $\mathbf{W}$.

Since $\kappa$ is a function of two variables, given independent samples $\mathbf{X} = \{X_i\}_{i=1}^{n}$ and $\mathbf{Y} = \{Y_j\}_{j=1}^{m}$, we define the (two-sample) $U$-statistic
\begin{align*}
   U_{n, m}(\mathbf{X}, \mathbf{Y}):=\frac{1}{n(n-1)} \sum_{j \neq i} \kappa\left(X_{i}, X_{j}\right)-\frac{2}{m n} \sum_{i=1}^{n} \sum_{k=1}^{m} \kappa\left(X_{i}, Y_{k}\right) + \frac{1}{m(m-1)} \sum_{l \neq k} \kappa\left(Y_{k}, Y_{l}\right).
\end{align*}  
In the asymptotic regime that $n$ and $m$ tend to infinity, under the assumption $\frac{m}{n+m} \to \rho \in (0,1)$,  \citet{gretton_kernel_2012} showed that 
\begin{align*}
    U_{n,m}(\mathbf{X,Y}) \to \| \mu[F_X] - \mu[F_Y]\|_{\mathcal{H}}^2 
\end{align*}
almost surely, and, when $\kappa$ is characteristic, then $\| \mu[F_X] - \mu[F_Y]\|_{\mathcal{H}}^2 = 0$ if and only if $F_X = F_Y$.
  Moreover, they showed that $(n+m)U_{n,m}(\mathbf{X,Y})$ has a nondegenerate limiting distribution under the null hypothesis $F_X = F_Y$, though the limiting distribution is not independent of $F_X$ and $F_Y$ in general.  The scaling by $(n+m)$ is due to the fact that the $U$-statistic is degenerate, where degeneracy of a $U$-statistic with kernel $h$ of two variables means that $\E_{F_X}(h(X,\cdot))$ is constant. 
  See, for example, Chapter 5 of \citet{serfling_approximation_1980} for more details on the theory of degenerate $U$-statistics.    

\section{Consistent Hypothesis Testing} \label{sec:theory}

We now present a detailed asymptotic analysis of our two-sample test statistic. 
Given two graphs $\aone$ and $\atwo$ on $n$ and $m$ vertices respectively our test statistic is defined via
\begin{align*}
    U_{n,m}(\xhat,\yhat) := \frac{1}{n(n-1)} \sum_{j \neq i} \kappa\left(\hat X_{i}, \hat X_{j}\right)-\frac{2}{m n} \sum_{i=1}^{n} \sum_{k=1}^{m} \kappa\left(\hat X_{i}, \hat Y_{k}\right)+\frac{1}{m(m-1)} \sum_{l \neq k} \kappa\left(\hat Y_{k}, \hat Y_{l}\right),
\end{align*}
where $\xhat$ and $\yhat$ are the adjacency spectral embeddings of $\aone$ and $\atwo$ respectively. In what follows, all of our asymptotic results are stated as $n$ and  $m$ tend to infinity. 
We will require some assumptions on the kernel $\kappa$.

\begin{assumption+}{1}\label{assumption:kappa}
The kernel $\kappa$ is characteristic, radial, and twice continuously differentiable on $\R^d$.
\end{assumption+}

The assumption that $\kappa$ is radial is so that our results can be expressed in terms of individual orthogonal matrices that may themselves be products of several orthogonal matrices.  For example, we have the identity $U(\xhat \mathbf{W}_1, \yhat \mathbf{W}_2) = U(\xhat \mathbf{\tilde W}, \yhat)$, where $\mathbf{\tilde W} = \mathbf{W}_1 \mathbf{W}_2\t$.  
Our main results can be modified slightly to hold without this assumption at the penalty of introducing more orthogonal matrices.  Differentiability is a relatively mild requirement, \textcolor{black}{and we note that it implies $\kappa$ is Lipschitz on compact sets, a fact we use several times in our proof.} The assumption of $\kappa$ being characteristic is so that we achieve consistency against all fixed alternatives. 

Since real-world graphs are sparse, we conduct a more thorough study of our test statistic under sparsity.  First, we make assumptions on the sparsity for which our more general results hold.  We implicitly assume that either $\alpha_n,\beta_m \to 0$ or that $\alpha_n \equiv \beta_m \equiv 1$, since if $\alpha_n$ or $\beta_m$ are converging \textcolor{black}{to some constant in $(0,1)$}, one can just rescale the distribution $F_X$ or $F_Y$.

\begin{assumption+}{2a}\label{sparsity1}
The sparsity parameters for the graphs satisfy
\begin{align*}
    \min(n\alpha_n,m\beta_m) \gg  \log^4(\max(n,m)) ,
\end{align*}
 and
 \begin{align*}
     \frac{m\beta_m }{m\beta_m+ n\alpha_n} \to \rho \in (0,1); \qquad m \asymp n.
 \end{align*}
\end{assumption+}
\noindent 
 In \cref{sparsity1} the technical assumption is that $\min(n\alpha_n,m\beta_m)$ grows polylogarithmically in $n$.  Recent work \citep{lei_unified_2019,abbe_entrywise_2020,su_strong_2020,xie_entrywise_2022} has established that $n\alpha_n \geq C \log(n)$ is required for perfect recovery in stochastic blockmodels and random dot product graphs.  While \cref{sparsity1} is slightly suboptimal, our analysis is based on the theory in \citet{rubindelanchy_statistical_2022}, who require that $n\alpha_n \gg \log^4(n)$.  \textcolor{black}{It is likely possible to extend our results down to the optimal regime $n\alpha_n \geq C \log(n)$, but this analysis requires significantly more bookkeeping and is unlikely to yield any useful conceptual or technical ideas.  We leave extending our results to this regime for future work.}
 

If instead we have a slightly denser graph, we make the following assumption.
\begin{assumption+}{2b}\label{sparsity2}
   The sparsity parameters for the graphs satisfy
\begin{align*}
    \min(n\alpha_n,m\beta_m) \gg n^{1/2} \log(n),
\end{align*}
and
 \begin{align*}
     \frac{m }{m+ n} \to \rho \in (0,1).
 \end{align*}
\end{assumption+}

In both asymptotic regimes, there are two competing factors: the first is in the approximation of the \emph{unperturbed} $U$-statistic to the population value; that is, the $U$-statistic obtained given access to the latent vectors $X_1, \dots ,X_n$, and the second is in the approximation of the \emph{estimated} $U$-statistic to the unperturbed $U$-statistic.  In the first asymptotic regime, the primary difficulty stems from the approximation of $\xhat$ and $\yhat$ to $\mathbf{X}$ and $\mathbf{Y}$ (up to indefinite orthogonal transformation) because the graph sparsity makes estimation difficult.  In the second asymptotic regime, the primary difficulty comes from the approximation of the $U$-statistic to the maximum mean discrepancy between two appropriately defined distributions.  The asymptotic regime in Assumption \ref{sparsity2} up to the logarithmic term has been assumed in the literature previously \citep{tang_asymptotically_2017,jones_multilayer_2020}. 



Under the GRDPG framework, one necessarily has to contend with indefinite orthogonal transformations. 
From the equation $\mathbf{Q}\ipq \mathbf{Q\t} = \ipq$ 
we have that $| \text{det}(\mathbf{Q})| = 1$, and hence $\mathbf{Q}$ is invertible and $\mathbf{Q\inv} \in \mathbb{O}(p,q)$ as well.  We also note that the set $\mathbb{O}(p,q)$ includes block-diagonal orthogonal matrices; i.e. if we have $\mathbf{W}_p$ and $\mathbf{W}_q$ for $p \times p$ and $q\times q$ orthogonal matrices, then
\begin{align*}
    \begin{pmatrix} \mathbf{W}_p &0 \\0 & \mathbf{W}_q \end{pmatrix} \ipq  \begin{pmatrix} \mathbf{W}_p\t &0 \\0 & \mathbf{W}_q\t \end{pmatrix} &=  \begin{pmatrix} \mathbf{W}_p\mathbf{W}_p\t &0 \\0 & -\mathbf{W}_q\mathbf{W}_q\t \end{pmatrix} = \ipq.
\end{align*}
We refer to the subgroup $\mathbb{O}(p, q) \cap \mathbb{O}(d)$ as the subgroup of block-orthogonal
matrices.  Note that $ \| \mathbf{Q} \| = 1$ for any block-orthogonal $\mathbf{Q}$, whereas 
for any finite $M > 0$, there exists $\mathbf{Q} \in \mathbb{O}(p,q)\setminus \mathbb{O}(d)$ with $\|\mathbf{Q} \|  > M$. 

Therefore, allowing for negative eigenvalues involves studying matrices $\mathbf{Q} \in \mathbb{O}(p,q)$ that could be badly behaved (in a spectral norm sense). Nevertheless, using the limiting results in \citet{agterberg_two_2020}, by subtly passing between indefinite and Euclidean geometry, we can show that when using the adjacency spectral embeddings, one does not even have to consider indefinite orthogonal matrices.  

In order to present our main results, we require some additional notation.  Define
\begin{align*}
    \mathbf{P}\one := \alpha_n\mathbf{X} \ipq \mathbf{X\t}; \qquad \mathbf{P}\two := \beta_m \mathbf{Y} \ipq \mathbf{Y\t},
\end{align*}
and let $\mathbf{U_X \Lambda_X U_X\t}$ and $\mathbf{U_Y \Lambda_Y U_Y\t}$ be their respective eigendecompositions, with $\mathbf{\Lambda_X}$ and $\mathbf{\Lambda_Y}$ arranged with the $p$ positive eigenvalues first and $q$ negative eigenvalues second.  Define
\begin{align*}
    \mathbf{\tilde X := U_X |\Lambda_X|}^{1/2}; \qquad  \mathbf{\tilde Y := U_Y |\Lambda_Y|}^{1/2}. 
\end{align*}
%
The matrices $\mathbf{\tilde X}$ and $\mathbf{\tilde Y}$ can be viewed as surrogates for the matrices $\alpha_n^{1/2}\mathbf{X}$ and $\beta_m^{1/2}\mathbf{Y}$ up to indefinite orthogonal transformation. 

We have the following result. All proofs are deferred to Section \ref{sec:proofs}.  

\begin{theorem}\label{thm:consistency_repeated}
Suppose Assumptions \ref{assumption:kappa} and \ref{sparsity1} hold.  Then under the null hypothesis $F_X \simeq F_Y$ there exist two sequences of block-orthogonal matrices $\mathbf{\hat W}_n$ and $\mathbf{W}_n$ such that  
\begin{align*}
(m\beta_m + n\alpha_n) \left| U_{n,m}\left( \frac{ \mathbf{\hat X \hat W}_n}{\sqrt{\alpha_n}}, \frac{\mathbf{\hat Y}}{\sqrt{\beta_m}} \right) -  U_{n,m}\left( \frac{\mathbf{\tilde X} \mathbf{W}_n}{\sqrt{\alpha_n}}, \frac{\mathbf{\tilde Y}}{\sqrt{\beta_m}} \right)\right| \to 0 
\end{align*}
almost surely.  
If instead $F_X \not \simeq F_Y$, then almost surely,
\begin{align*}
    \frac{(m\beta_m + n\alpha_n)}{\log(n)} \left| U_{n,m}\left( \frac{ \mathbf{\hat X \hat W}_n}{\sqrt{\alpha_n}}, \frac{\mathbf{\hat Y}}{\sqrt{\beta_m}} \right) -  U_{n,m}\left( \frac{\mathbf{\tilde X} \mathbf{W}_n}{\sqrt{\alpha_n}}, \frac{\mathbf{\tilde Y}}{\sqrt{\beta_m}} \right)\right|   \to 0. 
\end{align*}
\end{theorem}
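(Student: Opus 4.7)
The approach is to mirror the proof of Theorem \ref{thm:consistency}, inserting two additional block-orthogonal rotations to absorb the non-uniqueness of eigenvectors within each repeated eigenspace. When $\mathbf{\Delta_X}\ipq$ or $\mathbf{\Delta_Y}\ipq$ has repeated eigenvalues, both the population matrix $\mathbf{\tilde X}$ (respectively $\mathbf{\tilde Y}$) and the adjacency spectral embedding $\mathbf{\hat X}$ (respectively $\mathbf{\hat Y}$) are defined only up to an orthogonal rotation within each eigenspace; these rotations respect the block structure separating positive and negative eigenvalues, since those two invariant subspaces are always separated by zero.

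The first and hardest step is to extend the two-to-infinity perturbation bound for $\mathbf{\hat X}$ that underlies the proof of Theorem \ref{thm:consistency} to the repeated-eigenvalue setting. Specifically, I would establish that there exist sequences of block-orthogonal matrices $\mathbf{\hat W}_n^{(X)}, \mathbf{\hat W}_m^{(Y)}, \mathbf{W}_n^{(0)}, \mathbf{V}_m^{(0)}$ such that
\begin{align*}
\| \mathbf{\hat X} \mathbf{\hat W}_n^{(X)} - \mathbf{\tilde X}\mathbf{W}_n^{(0)} \|_{2,\infty} &= o\!\left( (m\beta_m + n\alpha_n)^{-1/2} \right), \\
\| \mathbf{\hat Y} \mathbf{\hat W}_m^{(Y)} - \mathbf{\tilde Y}\mathbf{V}_m^{(0)} \|_{2,\infty} &= o\!\left( (m\beta_m + n\alpha_n)^{-1/2} \right),
\end{align*}
almost surely under Assumption \ref{sparsity1}. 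This is a Davis--Kahan-type bound at the level of eigenspaces rather than individual eigenvectors, obtained by choosing $\mathbf{\hat W}_n^{(X)}$ and $\mathbf{\hat W}_m^{(Y)}$ via orthogonal Procrustes within each eigenspace and then applying the leave-one-out techniques of \citet{rubin-delanchy_statistical_2020} and \citet{agterberg_two_2020}, which remain valid when only the between-class eigengaps (between distinct eigenvalues of $\mathbf{\Delta_X}\ipq$ and $\mathbf{\Delta_Y}\ipq$) are present rather than individual eigengaps.

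Given these alignment bounds, I would then follow the proof of Theorem \ref{thm:consistency} verbatim. Since $\kappa$ is $C^2$ and the supports of $F_X, F_Y$ may be taken compact by Theorem \ref{theorem:bounded}, $\kappa$ is Lipschitz on the relevant compact region, so a first-order expansion combined with the two-to-infinity bound yields
\begin{align*}
U_{n,m}\!\left(\tfrac{\mathbf{\hat X}\mathbf{\hat W}_n^{(X)}}{\sqrt{\alpha_n}}, \tfrac{\mathbf{\hat Y}\mathbf{\hat W}_m^{(Y)}}{\sqrt{\beta_m}}\right) - U_{n,m}\!\left(\tfrac{\mathbf{\tilde X}\mathbf{W}_n^{(0)}}{\sqrt{\alpha_n}}, \tfrac{\mathbf{\tilde Y}\mathbf{V}_m^{(0)}}{\sqrt{\beta_m}}\right) = o\!\left(\tfrac{1}{m\beta_m + n\alpha_n}\right)
\end{align*}
almost surely in the null case, and $o(\log(n)/(m\beta_m + n\alpha_n))$ in the alternative, the extra logarithm coming from the high-probability two-to-infinity control, exactly as in Theorem \ref{thm:consistency}. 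Finally, since $\kappa$ is radial one has $\kappa(a\mathbf{O}_1, b\mathbf{O}_2) = \kappa(a\mathbf{O}_1\mathbf{O}_2^\top, b)$ for any orthogonal $\mathbf{O}_1, \mathbf{O}_2$; applying this identity termwise in the definition of $U_{n,m}$ and using rotation invariance for the ``diagonal'' $\kappa(\hat X, \hat X)$ and $\kappa(\hat Y, \hat Y)$ contributions gives
\begin{align*}
U_{n,m}\!\left(\tfrac{\mathbf{\hat X}\mathbf{\hat W}_n^{(X)}}{\sqrt{\alpha_n}}, \tfrac{\mathbf{\hat Y}\mathbf{\hat W}_m^{(Y)}}{\sqrt{\beta_m}}\right) = U_{n,m}\!\left(\tfrac{\mathbf{\hat X}\mathbf{\hat W}_n}{\sqrt{\alpha_n}}, \tfrac{\mathbf{\hat Y}}{\sqrt{\beta_m}}\right)
\end{align*}
with $\mathbf{\hat W}_n := \mathbf{\hat W}_n^{(X)}(\mathbf{\hat W}_m^{(Y)})^\top$, and analogously for the population side with $\mathbf{W}_n := \mathbf{W}_n^{(0)}(\mathbf{V}_m^{(0)})^\top$; both are block-orthogonal as products of block-orthogonals, recovering the stated form. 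The main obstacle is the first step---establishing the two-to-infinity-norm alignment bound without individual eigengaps---where the key observation is that the weakened requirement (alignment of eigenspaces rather than of individual eigenvectors) is precisely what makes the subspace-level Davis--Kahan argument attainable at the same rate as the distinct-eigenvalue case.
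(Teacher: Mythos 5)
Your overall architecture---insert block-orthogonal rotations for each repeated eigenspace, prove an alignment result that needs no within-eigenspace gap, then use radiality of $\kappa$ to collapse the two rotations into the single matrices $\mathbf{\hat W}_n$ and $\mathbf{W}_n$ in the statement---matches the paper in spirit, and the radial-kernel collapsing step is exactly how the paper phrases its result. However, the quantitative core of your argument has a genuine gap. First, the alignment rate you require, $\| \mathbf{\hat X}\mathbf{\hat W}_n^{(X)} - \mathbf{\tilde X}\mathbf{W}_n^{(0)}\|_{2,\infty} = o\bigl((m\beta_m+n\alpha_n)^{-1/2}\bigr)$, is not attainable: the two-to-infinity bound available here (Theorem \ref{thm:grdpg}, which already holds with a block-orthogonal $\mathbf{W}_*$ and no within-eigenspace gap) is of order $\log(n)/\sqrt{n}$, i.e.\ $\log(n)/\sqrt{n\alpha_n}$ after the $\alpha_n^{-1/2}$ scaling, which is the row-wise fluctuation scale of the embedding and cannot be improved to $o\bigl((n\alpha_n)^{-1/2}\bigr)$ (in the dense case $\alpha_n\equiv 1$ it is not even $o(n^{-1/2})$). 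Second, and more importantly, even with the true rate the step ``first-order expansion plus the two-to-infinity bound gives $o\bigl(1/(m\beta_m+n\alpha_n)\bigr)$'' fails: a termwise Lipschitz/first-order bound only controls the difference of $U$-statistics by the (scaled) two-to-infinity error, so after multiplying by $m\beta_m+n\alpha_n$ you get a quantity of order $\sqrt{n\alpha_n}\,\log(n)$, which diverges. The linear perturbation terms do not cancel row by row; they only cancel after averaging, and under the null.

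The paper's proof supplies precisely this missing cancellation mechanism. It passes to the $V$-statistic, writes the difference as $\bigl|\,\|\xi\|_{\mathcal H}^2-\|\hat\xi\|_{\mathcal H}^2\,\bigr|\le \|\xi-\hat\xi\|_{\mathcal H}\bigl(2\|\xi\|_{\mathcal H}+\|\xi-\hat\xi\|_{\mathcal H}\bigr)$ for the empirical mean-embedding differences $\xi,\hat\xi$, and then (i) controls $\|\xi-\hat\xi\|_{\mathcal H}$ at rate $\sqrt{\log(n)/(n\alpha_n)}+\sqrt{\log(m)/(m\beta_m)}$ via the functional CLT of Lemma \ref{lem:fclt}, whose proof uses the expansion $\mathbf{\hat X}\mathbf{W}_*^\top-\mathbf{\tilde X}=(\mathbf{A}-\mathbf{P})\mathbf{U_X}|\mathbf{\Lambda_X}|^{-1/2}\mathbf{I}_{p,q}+\mathbf{R}$ from Lemma \ref{lem:frobenius} together with a chaining argument, exploiting that the dominant term is mean-zero and averages out across rows; and (ii) shows that under the null $\|\xi\|_{\mathcal H}=O(\sqrt{\alpha_n\log n}+\sqrt{\beta_m\log m})$, using Lemma \ref{lem:qtilde} (alignment of $\mathbf{Q_X}$ to $\mathbf{\tilde Q_X}$ up to a block rotation at rate $\sqrt{\log(n)/n}$), a Hilbert-space Hoeffding bound, and the fact that the population MMD vanishes under the null. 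It is exactly this degeneracy of $\xi$ under the null that yields the full $(m\beta_m+n\alpha_n)$ scaling, while under the alternative $\|\xi\|_{\mathcal H}$ is of order $\sqrt{m\beta_m+n\alpha_n}$, which is why the statement divides by $\log(n)$ there---not, as you suggest, because of the high-probability two-to-infinity control. Your proposal omits both the empirical-process averaging step and the null-degeneracy step, and these cannot be replaced by a pointwise Lipschitz argument, so the proof as written does not go through.
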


We emphasize that these results are desirable since the matrices $\mathbf{\hat W}_n$ are block orthogonal matrices and not indefinite orthogonal matrices.  The proofs in Section \ref{sec:proofs} demonstrate that we can effectively bypass the indefinite orthogonal transformations through careful analyses of their convergence, which requires careful tabulation of several block-orthogonal matrices.  Consequently, the only estimation required is the matrix $\mathbf{\hat W}_n$ -- we will return to estimating this orthogonal matrix in Section \ref{sec:algorithm}. In essence, the  orthogonal matrices $\mathbf{\hat W}_n$ and $\mathbf{W}_n$ appear because without distinct eigenvalues, the embeddings $\xhat$ and $\yhat$ need to simultaneously aligned to each other as well as $\xtilde$ and $\ytilde$.   

While the convergence in Theorem \ref{thm:consistency_repeated} shows that the \textcolor{black}{difference between} the $U$-statistic evaluated at $\alpha_n^{-1/2}\xhat\mathbf{\hat W}_n$ and $\beta_m^{-1/2}\yhat$ and the $U$-statistic evaluated at $\alpha_n^{-1/2}\xtilde \mathbf{W}_n$ and $\beta_m^{-1/2}\ytilde$ \textcolor{black}{tends to zero quickly}, this does not immediately imply consistent testing.  The following result quantifies the convergence of the centering term.

\begin{theorem} \label{thm:centering}
For the sequence of block-orthogonal matrices $\mathbf{W}_n$ appearing in Theorem \ref{thm:consistency_repeated}, it holds that
\begin{align*}
  \bigg|  U_{n,m}(\xtilde \mathbf{W}_n/\alpha_n^{1/2}, \ytilde/\beta_m^{1/2}) &- \| \mu[F_X \circ \mathbf{\tilde Q_X\inv}] - \mu[F_Y \circ \mathbf{\tilde Q_Y\inv}] \|_{\mathcal{H}}^2 \bigg| \\&= O\bigg( \sqrt{\frac{\log(n+m)}{n}} + \sqrt{\frac{\log(n+m)}{m}} \bigg),
\end{align*}
with probability at least $1 - O(n^{-2} + m^{-2})$.  Here $\mathbf{\tilde Q_X}$ and $\mathbf{\tilde Q_Y}$ are indefinite orthogonal matrices depending only on $F_X$ and $F_Y$ and the signature $(p,q)$ and are such that
\begin{align*}
    \| \mu[F_X \circ \mathbf{\tilde Q_X\inv}] - \mu[F_Y \circ \mathbf{\tilde Q_Y\inv}] \|_{\mathcal{H}}^2 &= \begin{cases} 0 & F_X \simeq F_Y \\ C > 0 & F_X \not \simeq F_Y.\end{cases}
\end{align*}
\end{theorem}
Consequently, both Theorems \ref{thm:consistency_repeated} and \ref{thm:centering} imply that if one can estimate the matrices $\mathbf{\hat W}_n$ consistently, then we can devise a consistent test procedure through a permutation test and bootstrapping the test statistic distribution. Again, we postpone estimating $\mathbf{\hat W}_n$ to Section \ref{sec:algorithm}.

The convergence in Theorem \ref{thm:consistency_repeated} requires scaling by $m\beta_m + n\alpha_n$.  This scaling is not the same as the scaling in \citet{gretton_kernel_2012} to yield a nondegerate limiting distribution.  However, for sufficiently dense graphs, we have the following result.

\begin{corollary}\label{cor:scaling}
Consider the setting of Theorem \ref{thm:consistency_repeated}, but suppose instead that Assumption \ref{sparsity2} is satisfied.
Under the null hypothesis, we have that
\begin{align*}
(m + n) \left( U_{n,m}\left( \frac{\mathbf{\hat X  \hat W}_n}{\sqrt{\alpha_n}}, \frac{\mathbf{\hat Y}}{\sqrt{\beta_m}} \right) -  U_{n,m}\left( \frac{\mathbf{\tilde XW}_n}{\sqrt{\alpha_n}}, \frac{\mathbf{\tilde Y}}{\sqrt{\beta_m}} \right)\right) \to 0
\end{align*}
almost surely, for the same sequences of orthogonal matrices $\mathbf{\hat W}_n$ and $\mathbf{W}_n$ as in Theorem \ref{thm:consistency_repeated}.  
\end{corollary}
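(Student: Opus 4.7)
The corollary follows by revisiting the proof of Theorem~\ref{thm:consistency_repeated} under the denser sparsity regime. Since Assumption~\ref{sparsity2} strictly strengthens Assumption~\ref{sparsity1} (because $n^{1/2}\log^{1+\eta}(n) \gg \log^4(n)$), I would reuse the block-orthogonal matrices $\mathbf{\hat W}_n, \mathbf{W}_n$ produced there and only upgrade the scaling from $(m\beta_m + n\alpha_n)$ to $(m+n)$. Since $m/(m+n) \to \rho \in (0,1)$, the ratio $(m+n)/(m\beta_m + n\alpha_n)$ is of order $1/\min(\alpha_n,\beta_m)$, so the task reduces to tightening the bound on the difference of $U$-statistics in Theorem~\ref{thm:consistency_repeated} by exactly this factor in the new regime.

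Concretely, I would retrace the two-term Taylor expansion of the radial kernel $\kappa$ about the aligned surrogate embedding used in Theorem~\ref{thm:consistency_repeated}, which yields
\begin{align*}
\bigl|U_{n,m}(\xhat\mathbf{\hat W}_n/\sqrt{\alpha_n}, \yhat/\sqrt{\beta_m}) - U_{n,m}(\xtilde\mathbf{W}_n/\sqrt{\alpha_n}, \ytilde/\sqrt{\beta_m})\bigr| \leq R_{1,n} + R_{2,n},
\end{align*}
where $R_{1,n}$ collects the linear-in-perturbation contributions and $R_{2,n}$ the quadratic Hessian remainder. The $2 \to \infty$-norm concentration for the GRDPG embedding \citep{rubin-delanchy_statistical_2020,agterberg_two_2020} gives $\|\xhat\mathbf{\hat W}_n/\sqrt{\alpha_n} - \xtilde\mathbf{W}_n/\sqrt{\alpha_n}\|_{2,\infty} = O_P(\log^{c}(n)/\sqrt{n\alpha_n})$, which under Assumption~\ref{sparsity2} is $o_P(n^{-1/4})$ up to logarithmic factors. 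Consequently, bounding the Hessian of $\kappa$ uniformly over the (compact, by Theorem~\ref{theorem:bounded}) support of $F_X$ and $F_Y$ gives $R_{2,n} = O_P(\log^{2c}(n)/(n\alpha_n))$, and $(m+n) R_{2,n} \to 0$ almost surely follows by Borel--Cantelli applied to the sub-exponential tails.

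The main obstacle is controlling $R_{1,n}$ at the improved rate. Here I would exploit the leading-order expansion $\xhat - \xtilde\mathbf{W} \approx (\aone - \mathbf{P}\one)\xtilde(\mathbf{\tilde\Lambda}_{\mathbf{X}})^{-1}$ to rewrite $R_{1,n}$ as a bilinear form in the mean-zero noise matrices $\aone - \mathbf{P}\one$ and $\atwo - \mathbf{P}\two$. Combining the degeneracy of the $U$-statistic under $H_0$ (which makes the leading linear-in-noise contributions average out rather than accumulate, since one-variable expectations of $\kappa$ are constant under the null) with a Hanson--Wright-type concentration inequality for bilinear forms in independent Bernoulli increments produces the additional factor of $\min(\alpha_n,\beta_m)$ needed to absorb the gap between $(m\beta_m + n\alpha_n)$ and $(m+n)$. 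Almost-sure convergence is then promoted from convergence in probability exactly as in the proof of Theorem~\ref{thm:consistency_repeated}.
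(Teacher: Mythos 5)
There is a genuine gap, and it sits in your treatment of the quadratic remainder $R_{2,n}$. From the $2\to\infty$ bound, each summand of $R_{2,n}$ is of order $\log^{2}(n)/(n\alpha_n)$ (in the $\hat X_i/\sqrt{\alpha_n}$ scale), and averaging over pairs does not improve this, so your bound gives $(m+n)R_{2,n} = O\big(\log^{2}(n)/\alpha_n\big)$; even replacing the row-wise bound by the Frobenius-norm concentration of Lemma \ref{lem:frobenius} only yields $(m+n)R_{2,n}=O(1/\alpha_n)$. Under Assumption \ref{sparsity2} the sparsity can be as small as $\alpha_n \asymp n^{-1/2}\log^{1+\eta}(n)$, so this quantity diverges (roughly like $\sqrt{n}$ up to logarithms); it certainly does not vanish, and no Borel--Cantelli argument can rescue a bound that is not $o(1)$. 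The quadratic-in-noise contributions are in fact only negligible because, under the null, the Hessian-weighted averages $\tfrac1n\sum_j \nabla^2\kappa(\cdot,\tilde X_j/\sqrt{\alpha_n})$ and $\tfrac1m\sum_k \nabla^2\kappa(\cdot,\tilde Y_k/\sqrt{\beta_m})$ nearly coincide, so the quadratic terms cancel across the three blocks of the $U$-statistic. The paper's proof captures this cancellation automatically by writing $(m+n)(V_{n,m}(\xhat\cdots)-V_{n,m}(\xtilde\cdots)) = \|\hat\xi\|_{\mathcal H}^2-\|\xi\|_{\mathcal H}^2 \le \|\xi-\hat\xi\|_{\mathcal H}\,(2\|\xi\|_{\mathcal H}+\|\xi-\hat\xi\|_{\mathcal H})$, where the quadratic Taylor contributions live inside $\|\xi-\hat\xi\|_{\mathcal H}=O(\sqrt{\log n}/(\sqrt n\,\alpha_n)+\sqrt{\log m}/(\sqrt m\,\beta_m))$ (the improved, $1/\sqrt n$-scaled version of Lemma \ref{lem:fclt}) and are multiplied by $\|\xi\|_{\mathcal H}=O(\sqrt{\log n+\log m})$ under the null — not by $\sqrt{m+n}$ — so the product is $O(\log n/(\sqrt n\,\alpha_n)+\log m/(\sqrt m\,\beta_m))\to 0$ under Assumption \ref{sparsity2}. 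A direct Taylor expansion of the $U$-statistic difference can work only if you exhibit this cross-block cancellation explicitly, which your proposal does not.

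Your treatment of $R_{1,n}$ is also not yet a proof. The "degeneracy under $H_0$" concerns the latent-position randomness, whereas the linear-in-noise terms are linear in the conditional noise $\aone-\mathbf{P}\one$ given $\mathbf{X}$; what is actually needed is a bound, uniform over the (kernel-induced) function class, on $\sum_i \partial f(\tilde X_i)^\top\big[(\aone-\mathbf{P}\one)\mathbf{U_X}|\mathbf{\Lambda_X}|^{-1/2}\big]_i$, which the paper obtains by the chaining argument in Lemma \ref{lem:fclt}, not by a Hanson--Wright inequality, and you give no variance computation showing that a bilinear-form bound produces the claimed extra factor of $\min(\alpha_n,\beta_m)$. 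Two smaller points: Assumption \ref{sparsity2} does not literally strengthen Assumption \ref{sparsity1}, since the balance conditions differ ($m\beta_m/(m\beta_m+n\alpha_n)\to\rho$ versus $m/(m+n)\to\rho$); and the paper's own proof of this corollary is a short bookkeeping exercise — rerun the proof of Theorems \ref{thm:consistency} and \ref{thm:consistency_repeated} with the new scaling, invoke the final statement of Lemma \ref{lem:fclt} for $\|\xi-\hat\xi\|_{\mathcal H}$, and check that the resulting bound vanishes under Assumption \ref{sparsity2} — rather than a new expansion of the kernel.
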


Finally, we note that in general the sparsity factors $\alpha_n$ and $\beta_m$ are not known.  If instead we wish to use the estimated sparsity factors, we have the following result.  

\begin{corollary} \label{cor:scaling_estimate}
Assume that $\E( X_1 \t \ipq X_2) = \frac{1}{2}$ and that $\alpha_n,\beta_m \to 0$. 
Define
\begin{align*}
    \hat \alpha_n := \frac{1}{{{n }\choose 2}} \sum_{i < j} \aone_{ij}; \qquad \hat \beta_m := \frac{1}{{{m}\choose 2}} \sum_{i < j} \atwo_{ij}.
\end{align*}

Then the limiting results in Theorem \ref{thm:consistency_repeated} and Corollary \ref{cor:scaling} all hold under their respective conditions with $\mathbf{\hat X}/\alpha_n^{1/2}$ and $\mathbf{\hat Y}/\beta_m^{1/2}$ replaced with $\mathbf{\hat X}/\hat\alpha_n^{1/2}$ and $\mathbf{\hat Y}/\hat\beta_m^{1/2}$ respectively and the almost sure convergence replaced with  convergence in probability.
\end{corollary}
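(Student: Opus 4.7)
The plan is to show that substituting the estimators $\hat\alpha_n,\hat\beta_m$ for the unknown $\alpha_n,\beta_m$ perturbs the $U$-statistic by a negligible amount, and then invoke Theorems \ref{thm:consistency}, \ref{thm:consistency_repeated}, and Corollary \ref{cor:scaling} through the triangle inequality. The first step is to establish the rate at which $\hat\alpha_n$ concentrates around $\alpha_n$. The hypothesis $\E(X_1\t\ipq X_2)=1$ gives $\E[\hat\alpha_n]=\alpha_n$, and the standard Hoeffding decomposition splits $\hat\alpha_n-\alpha_n$ into a conditionally independent Bernoulli-noise term of variance $O(\alpha_n/n^2)$ and a non-degenerate $U$-statistic in the latent positions of variance $O(\alpha_n^2/n)$. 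Under either Assumption \ref{sparsity1} or \ref{sparsity2} this yields $\hat\alpha_n/\alpha_n = 1 + O_p(n^{-1/2})$ and analogously $\hat\beta_m/\beta_m = 1 + O_p(m^{-1/2})$, so that $G_n := \sqrt{\alpha_n/\hat\alpha_n}$ and $H_m := \sqrt{\beta_m/\hat\beta_m}$ both tend to $1$ in probability at these rates.

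Next, I would write $\xhat/\sqrt{\hat\alpha_n} = G_n \cdot \xhat/\sqrt{\alpha_n}$ and likewise for $\yhat$, and on the high-probability event where the rescaled embeddings have uniformly bounded rows (as used in the proof of Theorem \ref{thm:consistency}), Taylor-expand the $C^2$ radial kernel $\kappa(u,v) = f(\|u-v\|^2)$ around $(G,H)=(1,1)$. This produces the identity
\[
U_{n,m}\!\left(\tfrac{\xhat}{\sqrt{\hat\alpha_n}}, \tfrac{\yhat}{\sqrt{\hat\beta_m}}\right) - U_{n,m}\!\left(\tfrac{\xhat}{\sqrt{\alpha_n}}, \tfrac{\yhat}{\sqrt{\beta_m}}\right) = (G_n^2-1)\mathcal{A}_{n,m} + (H_m^2-1)\mathcal{B}_{n,m} + R_{n,m},
\]
where $\mathcal{A}_{n,m}$ and $\mathcal{B}_{n,m}$ are two-sample $U$-statistics built from $f'(\|\cdot\|^2)$ and the bilinear structure of $\|G\xhat_i/\sqrt{\alpha_n} - H\yhat_k/\sqrt{\beta_m}\|^2$, and $R_{n,m}$ is a second-order remainder that is quadratic in $(G_n-1,H_m-1)$.

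The main obstacle is showing that $(m\beta_m+n\alpha_n)$ times the above difference is $o_p(1)$ under the null (with the corresponding $\log(n)$-relaxation under the alternative, and the $(m+n)$-scaling in Corollary \ref{cor:scaling}). The crucial structural observation is that under $F_X\simeq F_Y$ the population versions of $\mathcal{A}_{n,m}$ and $\mathcal{B}_{n,m}$ vanish: expanding $\|X-X'\|^2 = \|X\|^2+\|X'\|^2-2\langle X,X'\rangle$ and invoking exchangeability of $X,X'$ forces the three-way cancellation of the diagonal, cross, and off-diagonal contributions that otherwise appear as coefficients of $G_n^2-1$ and $H_m^2-1$. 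Combined with the $O_p(n^{-1/2})$ rate of $G_n-1$ (and the analogous rate for $H_m-1$), together with the correlation between $G_n$ and $\mathcal{A}_{n,m}$ induced by their common dependence on the latent positions $\mathbf{X}$, this pushes the first-order contribution below the required $o_p((m\beta_m+n\alpha_n)^{-1})$ threshold. The quadratic remainder $R_{n,m}$ is handled by applying the same symmetrization at one higher order. Assembling via the triangle inequality with the almost-sure convergence in Theorems \ref{thm:consistency}, \ref{thm:consistency_repeated}, and Corollary \ref{cor:scaling} then yields convergence in probability with $\hat\alpha_n,\hat\beta_m$ in place of $\alpha_n,\beta_m$. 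Under the alternative $F_X\not\simeq F_Y$ the same decomposition applies; the population $\mathcal{A}_{n,m}$ need not vanish, but the extra $\log(n)$ factor in the rescaling absorbs the correspondingly weaker cancellation.
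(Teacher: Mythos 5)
Your route is genuinely different from the paper's, and it has a gap at its central step. The paper never compares the statistic at $\hat\alpha_n$ with the statistic at $\alpha_n$: it shows (Lemma \ref{cor:fclt_estimate}, proved from Lemma \ref{sparsity_lemma} and Slutsky) that the functional CLT of Lemma \ref{lem:fclt} survives the replacement of $\hat X_i/\alpha_n^{1/2}$ by $\hat X_i/\hat\alpha_n^{1/2}$ in probability, after which the proofs of Theorems \ref{thm:consistency}, \ref{thm:consistency_repeated} and Corollary \ref{cor:scaling} run verbatim. Your opening step (relative error $O_{\p}(n^{-1/2})$ for $\hat\alpha_n$ and $O_{\p}(m^{-1/2})$ for $\hat\beta_m$) is correct and is essentially Lemma \ref{sparsity_lemma}, and your observation that the population coefficient of $G_n^2-1$ vanishes under the null is also correct. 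But you never bound the \emph{empirical} coefficient $\mathcal{A}_{n,m}$, and the mechanism you invoke (a correlation between $G_n$ and $\mathcal{A}_{n,m}$) is not the one that works. What the null case of your reduction needs is $\mathcal{A}_{n,m}=o_{\p}\big(1/(\sqrt{n}\,\alpha_n)\big)$, since $m\beta_m+n\alpha_n\asymp n\alpha_n$ and $G_n^2-1=O_{\p}(n^{-1/2})$. The deviation of $\mathcal{A}_{n,m}$ from zero has a latent-sampling part of order $n^{-1/2}+m^{-1/2}$, which is fine, plus a plug-in part from replacing the latent positions by $\xhat,\yhat$; the only tool your sketch supplies for that part is the row-wise $2\to\infty$ Lipschitz bound, which gives $O(\log n/\sqrt{n\alpha_n})$, and then $(m\beta_m+n\alpha_n)(G_n^2-1)\mathcal{A}_{n,m}$ is of order $\sqrt{\alpha_n}\log n$ — not $o_{\p}(1)$ over the full range of Assumption \ref{sparsity1} (take $\alpha_n=1/\log n$). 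Closing this gap requires averaged, not row-wise, control of the plug-in error, e.g. through the Frobenius bound of Lemma \ref{lem:frobenius} or the empirical-process machinery of Lemmas \ref{lem:fclt} and \ref{cor:fclt_estimate}; that machinery is exactly what your proposal bypasses.

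The alternative case is a second, sharper gap: the claim that the extra $\log(n)$ in the rescaling absorbs the weaker cancellation is quantitatively false. Under $F_X\not\simeq F_Y$ the population coefficient of $G_n^2-1$ is a nonzero constant, and $G_n^2-1$ is of exact order $n^{-1/2}$ in probability (it inherits a nondegenerate CLT limit from the latent-position $U$-statistic), so your first-order term multiplied by $(m\beta_m+n\alpha_n)/\log(n)$ is of order $\sqrt{n}\,\alpha_n/\log(n)$, which diverges for, e.g., $\alpha_n=n^{-1/4}$; a logarithmic factor cannot absorb a polynomially growing quantity. Hence the triangle-inequality reduction does not deliver the alternative-hypothesis half of the corollary as written: that half cannot be obtained by comparing the statistic at $\hat\alpha_n$ and at $\alpha_n$ and must instead be handled inside the proof of Theorem \ref{thm:consistency}, where the sparsity-estimation error enters only through the functional-CLT term $\|\xi-\hat\xi\|_{\mathcal{H}}$ and is balanced against $\|\xi\|_{\mathcal{H}}$.
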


The condition $\E(X_1\ipq X_2) = \frac{1}{2}$ is 
used only
for identifiability of $\alpha_n$ and $\beta_m$ when they need to be estimated. See
e.g., \citet{lunde_subsampling_2023} for an identical condition in the setting of graphons.  

\subsection{Interpretation}
There are several different alignment matrices that appear in order to show the convergence in Theorem \ref{thm:consistency_repeated}. However, in our analysis we are able to show that only the indefinite orthogonal matrices that are simultaneously orthogonal have any effect on the limiting values.  Given Theorem \ref{thm:centering}, the main results in Theorem \ref{thm:consistency_repeated} further details that under the null hypothesis $F_X \simeq F_Y$ one can perform consistent testing given access to only the graphs $\aone$ and $\atwo$. The results of \citet{gretton_kernel_2012} imply that $(m+n)U_{n,m}(\mathbf{X,Y})$ has a nondegenerate limiting distribution under the null hypothesis. For graphs with average degree growing faster than $n^{1/2}\text{polylog}(n)$, Corollary \ref{cor:scaling} says that the same scaling occurs under the null hypothesis with $\xtilde$ and $\ytilde$ as replacements for $\mathbf{X}$ and $\mathbf{Y}$.  For almost surely dense graphs; i.e. graphs with $\alpha_n = \beta_m = 1$, Theorem \ref{thm:consistency_repeated} also provides a result under the alternative.

Lemma \ref{lem:qtilde} shows that the rate of the approximation of $\xtilde \mathbf{W}_n/\sqrt{\alpha_n}$ and $\ytilde/\sqrt{\beta_m}$ to $\mathbf{X\tilde Q_X\inv}$ and $\mathbf{Y\tilde Q_Y\inv}$ is of order $\sqrt{\log(n)/n}$, which, in general, is not fast enough to guarantee the convergence of $$(n+m)\left(U_{n,m}(\xtilde \mathbf{W}_n/\alpha_n^{1/2},\ytilde/\beta_m^{1/2}) -U_{n,m}(\mathbf{X\tilde Q_X\inv},\mathbf{Y\tilde Q_Y\inv}) \right)$$ to zero. However, Theorem \ref{thm:consistency_repeated} together with Theorem \ref{thm:centering} shows that the $U$-statistic evaluated at $\xhat \mathbf{\hat W}_n$ and $\yhat$ still tends to zero under the null hypothesis and to a constant under the alternative, which guarantees consistent testing.


For testing purposes, the lack of distributional results is of no consequence, since if one can reliably estimate the orthogonal transformation $\mathbf{\hat W}_n$ appearing in Theorem \ref{thm:consistency_repeated} then one can perform consistent testing through a bootstrapped permutation test; see the following section.  Furthermore, the limiting distribution for the maximum mean discrepancy between two distributions $F_X$ and $F_Y$ will not be independent of $F_X$ and $F_Y$ in general, so one may have to use a permutation test to approximate the null distribution anyways.



For sparser graphs, the convergence in Theorem \ref{thm:consistency_repeated} under the null hypothesis requires the scaling $m\beta_m + n\alpha_n$, which, if $n \asymp m$ and $\alpha_n \asymp \beta_m $ is slower than the convergence in \citet{gretton_kernel_2012} by a factor of $\alpha_n$.  The reason for this stems primarily from the fact that for sparse graphs it is much more difficult to estimate $\xtilde$ and $\ytilde$.  The sparsity factor here brings down the effective sample size; one observes only $\Theta(n\alpha_n)$ edges per vertex on average for sparse graphs instead of $\Theta(n)$ edges for dense graphs.  Therefore, though the scaling is slower than that for \cref{cor:scaling}, Theorem \ref{thm:consistency_repeated} still guarantees consistency under both fixed and local alternatives as long as 
\begin{align*}
    \| \mu[F_X \circ \mathbf{\tilde Q_X}\inv] - \mu[F_Y \circ \mathbf{\tilde Q_Y}\inv] \|_{\mathcal{H}}^2 \gg \max\bigg\{ \sqrt{\frac{\log(n)}{n}}, \frac{\log(n)}{n\alpha_n}\bigg\}
\end{align*}
whenever \cref{sparsity1} holds, estimating $\mathbf{\hat W}_n$ not withstanding.

\subsection{Estimating $\mathbf{\hat W}_n$} \label{sec:algorithm}

We note that thus far, we have demonstrated that negative eigenvalues do not affect limiting results despite \emph{a priori} having to consider indefinite orthogonal transformations.  Such a result is desirable, as one does not have to resort to numerical algorithms optimizing over $\mathbb{O}(p,q)$, which could be unstable due to the ill-conditioning inherent in indefinite orthogonal transformations.  Furthermore, we have shown that any modification to our test need  estimate only the matrix $\mathbf{\hat W}_n$ from Theorem \ref{thm:consistency_repeated}.  We now turn our attention to estimating $\mathbf{\hat W}_n$.

First, observe that by Theorem \ref{thm:centering} under the null hypothesis there exist indefinite orthogonal matrices $\mathbf{\tilde Q_X}$ and $\mathbf{\tilde Q_Y}$ depending only on $F_X$, $F_Y$, and the signature $(p,q)$ such that
\begin{align*}
    F_X \circ \mathbf{\tilde Q_X}\inv =  F_Y\circ \mathbf{\tilde Q_Y}\inv,
\end{align*}
which follows from the fact that $\kappa$ is assumed to be a characteristic kernel.  Since the two distributions are equal under the null, we consider estimating $\mathbf{\hat W}_n$ via the machinery of \emph{optimal transport}.  

Let $\mathbf{\hat X}$ and $\mathbf{\hat Y}$ be the adjacency spectral embeddings of $\aone$ and $\atwo$.  One can view a collection of points as a distribution by assigning equal point mass to each point.  Define $\hat F_{\hat X}$ as the empirical distribution for $\mathbf{\hat X}$ and define $\hat F_{\hat Y}$ as the empirical distribution for $\mathbf{\hat Y}$; that is $\hat F_{\hat X}$ places point mass of $\frac{1}{n}$ at each $\hat X_i$, and $\hat F_{\hat Y}$ places point mass of $\frac{1}{m}$ at each $\hat Y_j$. Let $d_2(\cdot,\cdot)$ denote the Wasserstein $\ell^2$ distance between two distributions; that is, given two distributions $F$ and $G$, we define
\begin{align*}
    d_2( F,G) := \inf_{\Gamma_{F,G}}( \E_{(X,Y)\sim\Gamma_{F,G}} \| X - Y\|_2^2 )^{1/2}, 
\end{align*}
where $\Gamma_{F,G}$ is the set of 
distributions whose marginals are $F$ and $G$. The set $\Gamma_{F,G}$ is called the set of \emph{couplings} of $F$ and $G$.  If $F$ and $G$ are empirical distributions on $n$ and $m$ points respectively, the couplings can be represented by matrices whose rows and columns sum to $\frac{1}{m}$ and $\frac{1}{n}$; these are the \emph{assignment matrices}.

In light of Theorems \ref{thm:consistency_repeated} and \ref{thm:centering}, we propose finding the orthogonal matrix $\mathbf{\hat W}_n$ that solves the problem
\begin{align}
    \inf_{\mathbf{W} \in \mathbb{O}(d)\cap \mathbb{O}(p,q)} d_2( \hat F_{\hat X/\hat\alpha_n^{1/2}}, \hat F_{\hat Y/\hat \beta_m^{1/2}} \circ \mathbf{ W}), \label{wasserstein}
\end{align}
where $\hat F_{\hat Y/\hat \beta_m^{1/2}} \circ \mathbf{W}$ is the empirical distribution $\hat F_{\hat Y/\hat \beta_m^{1/2}}$ transformed by an orthogonal matrix $\mathbf{W}$.  The above distance is considered in both \citet{lei_network_2020} and \citet{levin_bootstrapping_2025} as the \emph{orthogonal Wasserstein distance}.  


In order to account for the positive and negative parts separately, we consider optimizing over the top $p$ part and the bottom $q$ part separately.  To be concrete, we consider minimizing the two (separate) problems
\begin{align}
\mathbf{W}_1 :&= \argmin_{\mathbf{W} \in \mathbb{O}(p)} d_2( \hat F_{\hat X/\hat\alpha_n^{1/2}}^{(p)},  \hat F_{\hat Y/\hat\beta_m^{1/2}}^{(p)} \textcolor{black}{\circ \mathbf{W}}); \label{eq:otp} \\
\mathbf{W}_2 :&= \argmin_{\mathbf{W} \in \mathbb{O}(q)} d_2( \hat F_{\hat X/\hat\alpha_n^{1/2}}^{(q)},  \hat F_{\hat Y/\hat\beta_m^{1/2}}^{(q)} \textcolor{black}{\circ \mathbf{W}}), \nonumber
\end{align}
where $\hat F_{\hat X/\hat\alpha_n^{1/2}}^{(p)}$ denotes the marginal distribution of the first $p$ components (with the other terms defined similarly).  We then set 
\begin{align*}
    \mathbf{\hat W} :&= \begin{pmatrix}
    \mathbf{W}_1 & 0 \\
     0 & \mathbf{W}_2
    \end{pmatrix}.
\end{align*}
The problem in expression \eqref{eq:otp} is simultaneously  an optimal transport problem in finding the minimum over couplings and a Procrustes problem in finding the minimum over orthogonal matrices. Define the matrix $\mathbf{C_W} \in \R^{n \times m}$ as the \emph{cost matrix with respect to $\mathbf{W}$} by setting $$\mathbf{(C_W)}_{ij} := \|\mathbf{W} \hat X_i^{(p)} -  \hat Y_j^{(p)}\|^2.$$  Then expression \ref{wasserstein} can be written as 
\begin{align}
    \min_{ \mathbf{W, \Pi}} \langle \mathbf{\Pi,C_W} \rangle \label{mainprob}
\end{align}
where the inner product is the Frobenius (matrix) inner product, $\mathbf{W}$ is a block-orthogonal matrix, and $\mathbf{\Pi}$ satisfies $\mathbf{\Pi}\mathbf{1} = \frac{1}{m} \mathbf{1}$ and $\mathbf{\Pi\t}\mathbf{1} = \frac{1}{n} \mathbf{1}$; that is, $\mathbf{\Pi}$ is an assignment matrix. 

A natural solution to solve the problem \eqref{mainprob} is to alternate between 1) solving for $\mathbf{\Pi}$ (given fixed $\mathbf{W})$, and 2) solving for $\mathbf{W}$.  
The following result demonstrates the properties of the global minimizers of \eqref{mainprob}.
\begin{proposition}
 \label{prop3}
   Suppose that $F_X \simeq F_Y$.  Then with probability at least $1 - O(n^{-2} + m^{-2}),$
   \begin{align*}
       \inf_{\mathbf{W} \in \mathbb{O}(d) \cap\mathbb{O}(p,q)} d_2(\mathbf{\hat X} \mathbf{W}, \mathbf{\hat Y} ) &= O\left( \frac{\log^{1/d}(n)}{n^{1/d}} +\frac{\log^{1/d}(m)}{m^{1/d}} + \frac{\log(n)}{(n\alpha_n)^{1/2}} + \frac{\log(m)}{(m\beta_m)^{1/2}}\right).
   \end{align*}
\end{proposition}
Unfortunately, the procedure of alternating optimization suffers from a computational bottleneck -- the optimal transport problem is an \emph{assignment problem}, and while there exist efficient algorithms to solve any one assignment problem, this procedure requires solving an assignment problem at each iteration, which takes time $O(\max\{n,m\} \times \min\{n,m\}^2)$.  To ameliorate this bottleneck, we propose solving a regularized version of the optimal transport problem, first proposed by \citet{cuturi_sinkhorn_2013}.  Define the auxiliary expression
\begin{align}
\inf_{\mathbf{\Pi, W}} \langle \mathbf{\Pi}, C_\mathbf{W} \rangle + \eps H(\mathbf{\Pi}) \label{prob:sinkhorn}
\end{align}
where $H(\mathbf{\Pi})$ is the entropy of the distribution given by $\mathbf{\Pi}$.  For a fixed $\eps$, Equation \ref{prob:sinkhorn} can be computed efficiently via the Sinkhorn algorithm \citep{cuturi_sinkhorn_2013}. 
We then alternately minimize over $\mathbf{W}$ and $\mathbf{\Pi}$ to find the solution.

The following result shows that provided there is a pair of fixed points $(\Pi^{(\infty)},\mathbf{\hat W}^{(\infty)})$, 
alternate minimization yields convergence provided we initialize within a constant radius of the global minimizer. For convenience we focus on the case $q = 0$ since we optimize both separately.
%
\begin{theorem}\label{thm:iterativeconvergence}
Suppose that $(\mathbf{\hat W}^{(\infty)}, \mathbf{\hat \Pi}^{(\infty)})$ are fixed points of \eqref{prob:sinkhorn}.  
Suppose that $\mathbf{\hat X}\t \mathbf{\hat \Pi}^{(\infty)} \mathbf{\hat Y}$ is rank $d$, and has smallest singular value $\sigma_d$ satisfying $\sigma_d \geq C_0 > 0$, \textcolor{black}{where $C_0$ is some fixed  constant depending on the supports of $\mathbf{X}$ and $\mathbf{Y}$}. Then the iterates of \cref{algo2} satisfy
\begin{align*}
    \| \mathbf{W}_T - \mathbf{\hat W}^{(\infty)} \|_F \leq \frac{1}{2^T} \| \mathbf{W}_{\mathrm{init}} - \mathbf{\hat W}^{(\infty)} \|_F
\end{align*}
for all $T \in \mathbb{N}$.  
\end{theorem}

\cref{thm:iterativeconvergence} demonstrates that \textcolor{black}{\cref{algo2}} yields convergence to fixed points, assuming there are fixed points. 
However, consider the case that $\mathbf{\hat X} = \mathbf{\hat Y \mathbf{W}}$ for some orthogonal $\mathbf{W}$.  Then $\mathbf{\hat X}\t \mathbf{\hat Y} = \mathbf{W}\t$, and hence $\mathbf{W}$ would be a fixed point provided $\eps$ is taken sufficiently small, as the optimal (non-regularized) coupling between $\mathbf{X}$ and $\mathbf{YW}$ is given by the identity.    In practice, we have found that starting at the sign matrices $\text{diag}(\pm 1)$  and finding various local minimums yields good performance.

In addition, \cref{thm:iterativeconvergence} requires that the smallest singular value of $\xhat\t \mathbf{\hat \Pi}^{(\infty)} \yhat$ is bounded below by some absolute constant, \textcolor{black}{which itself depends on our application of Theorem 3 of \citet{deligiannidis_quantitative_2024}, which depends on the unknown distribution function.  Therefore,  while this constant is unknown in practice, it does not grow with $n$.}  If $\mathbf{\hat X} = \mathbf{\hat Y \mathbf{W}}$ for some orthogonal $\mathbf{W}$ and $\eps$ is taken sufficiently small, then $\mathbf{\hat \Pi}^{(\infty)}$ is the identity, and the singular values of $\mathbf{\hat X}\t \mathbf{\hat Y} \mathbf{W}$ can be lower bounded by each of the nonzero singular values of $\mathbf{\hat X}$ and $\mathbf{\hat Y}$, which, under our assumptions, can be shown to be growing in $n$.  Consequently, the assumption that the singular values of $\xhat\t \mathbf{\hat \Pi}^{(\infty)} \yhat$ are lower bounded by an absolute constant, while unverifiable in practice, is not particularly stringent.

Finally, we demonstrate that even when minimizing over all block-orthogonal transformations, under a fixed alternative we still have consistency.  The following result holds regardless of how we choose to estimate $\mathbf{\hat W}_n$ (e.g. via regularized or unregularized optimal transport).
\begin{proposition} 
 \label{prop2} Let $(\aone,\mathbf{ X}) \sim GRDPG(F_X,n,\alpha_n)$ and $(\atwo, \mathbf{Y}) \sim GRDPG(F_Y,m,\beta_m)$ be independent. Suppose Assumption \ref{sparsity1} or Assumption \ref{sparsity2} is satisfied, and suppose further that $\kappa$ satisfies Assumption \ref{assumption:kappa}. 
 If $F_X \not \simeq F_Y$, then for any sequence of orthogonal matrices $\mathbf{ W}_n 
 \in \mathbb{O}(p,q) \cap \mathbb{O}(d)$, there exists a constant $C > 0$ depending only on $F_X$ and $F_Y$ such that almost surely $$\liminf_{n,m} U_{n,m}(\xhat \mathbf{W}_n/ \alpha_n^{1/2},\yhat / \beta_m^{1/2}) \geq  C. $$
 \end{proposition}


Given two adjacency matrices $\aone$ and $\atwo$, calcuating the adjacency spectral embeddings, and aligning these embeddings with an orthogonal matrix yields a consistent test statistic.  From there, one can bootstrap the null distribution of $U_{n,m}$ to get an approximate $p$-value.  In summary, our full procedure for two-sample testing is described in \cref{alg:mainprocedure}, wherein the step of estimating $\mathbf{\hat W}_n$ is given in \cref{algo2} which, as described above, uses  Sinkhorn regularization within each step.  This regularization scheme only suffers from a penalty in the objective function of $O(\eps \log(1/\eps))$, so by taking $\eps$ sufficiently small, the error from the Sinkhorn regularization will be smaller than the error obtained via optimal transport.

\begin{algorithm}[t]
\caption{Optimal Transport-Procrustes}
 \label{algo2}
\begin{algorithmic}[t]
\REQUIRE $\mathbf{\hat X, \hat Y}$ initial guesses $\mathbf{W}\in \mathbb{O}(p)$, $\mathbf{\Pi }=  \frac{1}{mn}\mathbf{1}\mathbf{1}\t$, dimension $p$.
\REPEAT
\STATE Calculate $C_{\mathbf{W}'}$ via $(C_{\mathbf{W}})_{ij} := \|\hat X_i - \mathbf{W}\hat Y_j\|_2^2$
\STATE Set $\eps > 0$ as some positive number and solve for $\mathbf{\Pi}$ aligning $\mathbf{\hat X}\mathbf{W}$ and $\mathbf{\hat Y}$ via the Sinkhorn algorithm
\STATE Set $\mathbf{M := \Pi \hat X}\mathbf{W (\hat Y} )\t$ with singular value decomposition $\mathbf{U\Sigma V\t}$, set $\mathbf{W}' := \mathbf{UV\t}$
\STATE Set $\mathbf{W} := \mathbf{W}'$
\UNTIL max number of iterations
\RETURN $\mathbf{ W}$
\end{algorithmic}

\end{algorithm}

\begin{algorithm}[h]

\begin{algorithmic}[1]

\caption{Nonparametric Two-Graph Hypothesis Testing}
\label{alg:mainprocedure}
\REQUIRE $\aone \in \R^{n\times n}, \atwo\in\R^{m \times m}$
\STATE Embed $\aone$ and $\atwo$ into $\R^d$ using the adjacency spectral embeddings, obtaining $\mathbf{\hat X}$ and $\mathbf{\hat Y}$ and sparsity estimates $\hat\alpha_n^{1/2}$ and $\hat \beta_m^{1/2}$.
\STATE Find $\mathbf{\hat W}_n^1$ using Algorithm \ref{algo2} on the first $p$ coordinates of $\mathbf{\hat X}$ and $\mathbf{\hat Y}$ 
\STATE Find $\mathbf{\hat W}_n^2$ using Algorithm \ref{algo2} on the last $q$ coordinates of $\mathbf{\hat X}$ and $\mathbf{\hat Y}$ 
\STATE Set $\mathbf{\hat W}_n = \mathrm{bdiag}(\mathbf{\hat W}_n^1, \mathbf{\hat W}_n^2)$.  
\STATE Calculate the value of the $U$-statistic $U_{n,m}(\mathbf{\hat X}/\hat\alpha_n^{1/2}, \mathbf{\hat Y \hat W}_n/\hat\beta_m^{1/2})$;
\STATE Bootstrap the $U$-statistic distribution by repeating: 
\begin{enumerate}
\item Draw a sample of size $n$ from the empirical distributions $\hat F_{\hat X/\hat \alpha_n^{1/2}}$ and $\hat F_{\hat Y/\hat \beta_m^{1/2} } \circ \mathbf{\hat W}_n$ and assigning it to $\mathbf{X}^*$;
    \item Assign $\mathbf{Y}^{*}$ as the remaining samples;
    \item Calculate $U_{n,m}(\mathbf{X}^*, \mathbf{Y}^{*})$.
\end{enumerate}
\STATE Calculate the empirical probability of observing $U_{n,m}(\mathbf{\hat X}/\hat\alpha_n^{1/2}, \mathbf{\hat Y \hat W}_n/\hat\beta_m^{1/2})$ under the bootstrapped null distribution.
\RETURN Estimated p-value.
\end{algorithmic}

\end{algorithm}



\begin{remark}[Close, but not repeated eigenvalues] \label{sec:close_eigs_discussion}
    Before moving on, we provide some intuition as to why estimating a rotation can be beneficial even when one does not  have exactly repeated eigenvalues.  We focus on the positive semidefinite case for convenience, though the analysis for the indefinite case is similar. 

Suppose $\E(XX\t)$ has $d$ distinct eigenvalues, and let $\mathbf{U_A}$ and $\mathbf{U_P}$ be the leading $d$ eigenvectors of $\mathbf{A}$ and $\mathbf{P} = \alpha_n \mathbf{XX\t}$. Let $\mathbf{W}_*$ be defined via
\begin{align*}
    \mathbf{W}_* &= \inf_{\mathbf{W} \in \mathbb{O}(d)} \| \mathbf{U_A - U_PW}\|_F,
\end{align*}
which has a closed-form solution in terms of the left and right singular vectors of the matrix $\mathbf{U_A\t U_P}$.  Since $\E(XX\t)$ has distinct eigenvalues, without loss of generality assume that the columns of $\mathbf{U_A}$ are chosen so that the inner product between the columns of $\mathbf{U_A}$ and $\mathbf{U_P}$ are positive.  Then the sequence of matrices $\mathbf{W}_*$ is converging to the identity, which also provides the uniqueness (up to sign) of the matrices $\xhat$ and $\yhat$. 

Define $$\delta := \min_{1\leq i \leq d}\bigg( \lambda_{i}(\E(XX\t)) - \lambda_{i+1}(\E(XX\t))\bigg),$$ where $\lambda_{d+1} := -\infty$ by convention.  It can be shown (see Appendix \ref{sec:close_eigs}) that
\begin{align*}
    \|\mathbf{W}_* - \mathbf{I}\|_{F} &= O\bigg( \frac{\log(n)}{n\alpha_n \delta} \bigg),
\end{align*}
where the big $O(\cdot)$ notation hides dependence on the dimension $d$. 
Hence, even though the right hand side tends to zero as $n\alpha_n \to \infty$, so the eigenvectors of $\mathbf{A}$ and $\mathbf{P}$ are well-aligned (up to sign), the rate of convergence of the orthogonal matrix depends on $n,\alpha_n$, and the corresponding eigengap.  

In practice, one  observes only the two graphs, and the eigenvalues must be estimated from the eigenvalues of $\mathbf{A}$.  Therefore, even though the orthogonal matrix is converging to the identity, for any finite $n$, it may not be close if the eigengap is small relative to $n$.  So if one observes two graphs from the same model, but both $n$ and $m$ are small relative to $\delta$, then one may still need to estimate a rotation to align $\xhat$ and $\yhat$, despite asymptotically having distinct eigenvalues. 

\end{remark}

\subsection{Relation to Previous Results}
\label{sec:previousresults}
Both \citet{ghoshdastidar_two-sample_2017} and \citet{tang_nonparametric_2017} consider a similar test as in this paper.  In \citet{ghoshdastidar_two-sample_2017}, the authors introduce a formalism for two-sample testing under the assumption one observes only the adjacency matrices.  Although our broad setting is similar to theirs, our formulation is in terms of the underlying distribution for the graphs, while \citet{ghoshdastidar_two-sample_2017} is in terms of some network statistic $f$, and thus, given two networks $\aone \sim Q$ and $\atwo \sim Q'$ on $n$ and $m$ vertices with $n,m \to \infty$, our procedure is consistent against all fixed alternatives $Q\neq Q'$ (provided both $Q$ and $Q'$ are instances of the GRDPG model), while the test procedure in \citet{ghoshdastidar_two-sample_2017} is only consistent if the distribution of the network statistic $f$ is different under $Q$ and $Q'$.


Our results are perhaps most similar to \citet{tang_nonparametric_2017}. In \citet{tang_nonparametric_2017} the authors consider a similar test statistic (albeit without additionally computing $\mathbf{\hat W}_n$) under the assumption that $\aone$ and $\atwo$ are both from the random dot product graph model, the sparsity parameters are constant, and the matrices $\E(XX\t)$ and $\E(YY\t)$ have distinct eigenvalues. Leveraging previous results for random dot product graphs, 
the authors show that if $\kappa$ is a radial kernel, and $\frac{m}{n+m} \to \rho \in (0,1)$, then there exists a sequence of orthogonal matrices $\mathbf{W}_n$ such that
\begin{align*}
    (n+m) \left(U_{n,m}(\mathbf{\hat X, \hat Y}) - U_{n,m} (\mathbf{X, YW}_n) \right)\to 0
\end{align*}
almost surely under the null hypothesis, where $ U_{n,m}(\mathbf{\hat X, \hat Y})$ is the $U$-statistic defined using the estimates $\mathbf{\hat X}$ and $\mathbf{\hat Y}$ from the adjacency spectral embeddings of $\aone$ and $\atwo$. 
Moreover, for the sequence of orthogonal matrices $\mathbf{W}_n$ we have
\begin{align*}
    U_{n,m}(\mathbf{X, YW}_n) \to \begin{cases} 0 & F_X \simeq F_Y \\ c > 0 &F_X \not \simeq F_Y, \end{cases}
\end{align*}
where recall for RDPGs $F_X \simeq F_Y$ means that $F_X$ and $F_Y$ are the equivalent up to orthogonal transformation.

While at first glance the test statistic proposed in \citet{tang_nonparametric_2017} is similar to our test statistic, analyzing the test statistic in a general low-rank setting involves substantial theoretical and methodological considerations with respect to indefinite orthogonal transformations, optimal transport, and sparsity.  
%
Consider  for illustration the $K$-block balanced homogeneous stochastic blockmodel, where each vertex $i$ is assigned to community $k$ with probability $\frac{1}{K}$, and $\mathbf{A}_{ij}$ is drawn Bernoulli$(a)$ if vertices $i$ and $j$ are the same community and Bernoulli$(b)$ if vertices $i$ and $j$ are in different communities.  Letting $\sigma(\mathbf{M})$ denote the spectrum of a symmetric matrix $\mathbf{M}$, observe that it holds that
\begin{align*}
    \sigma\bigg( \frac{1}{n} \mathbf{X} \ipq \mathbf{X}\t \bigg) = \sigma\bigg( \frac{1}{n} \mathbf{X}\t \mathbf{X} \ipq \bigg) \to \mathbb{E}(X X\t) \ipq,
\end{align*}
and hence the assumption that $\E(XX\t)$ has distinct eigenvalues is equivalent to the assumption that $\E \aone | \mathbf{X}$ has gaps between nonzero eigenvalues growing at order $n\alpha_n$.  However, immediate inspection shows that this condition is violated (with high probability) in the $K$-block balanced homogenous stochastic blockmodel if $K \geq 3$.  Moreover, if $b > a$, it is straightforward to check that the $K$-block balanced homogeneous blockmodel translates into a GRDPG model with $p = 1$ and $q = K-1$, which renders the results of \citet{tang_nonparametric_2017} invalid (who require $q = 0$).  Moreover, our results shed light on an interesting phenomenon with respect to sparsity: for dense networks with average expected degree growing faster than $\sqrt{n}\log(n)$, one attains the scaling $(m+n)$ to zero under the null hypothesis without any additional modification, something that cannot be ascertained from the results of \citet{tang_nonparametric_2017}, which only hold under dense networks with average expected degree of order $n$.  


Beyond \citet{tang_nonparametric_2017}, there have been several tests   proposed assuming that the graphs have the same set of vertices, such as
%
%
\citet{tang_semiparametric_2017,ghoshdastidar_two-sample_2017,li_two-sample_2018,levin_central_2019} and \citet{draves_bias-variance_2020}.  In \citet{tang_semiparametric_2017,levin_bootstrapping_2025} and \citet{draves_bias-variance_2020}, the authors work under the random dot product graph model, though they require that the expected degree grows as $\Theta(n)$ (that is, the sparsity parameter is constant). In \citet{li_two-sample_2018}, working under the stochastic blockmodel, the authors are able to derive more explicit limiting results for their test statistic, under the condition that the expected degree grows as $\Theta(\sqrt{n})$. 
In \citet{ghoshdastidar_two-sample_2020}, the authors allow for arbitrary distributions on two graphs, but again require that the graphs be on the same set of nodes.  In contrast to all of these works, we do not assume that the two graphs are on the same set of vertices.  

Our test statistic is based on a two-sample $U$-statistic using the rows of $\xhat$ and $\yhat$.  In \citet{levin_bootstrapping_2025}, the authors consider bootstrapping nondegenerate $U$-statistics for random dot product graphs by estimating the latent positions. 
In addition, there have been a number of works on $U$-statistics for graphs in the more general graphon model \citep{lunde_subsampling_2023,zhang_edgeworth_2022,lin_higher-order_2020,lin_theoretical_2020}, but these involve bootstrapping moments of the underlying graphon, which can be computationally infeasible in practice.  In this paper, we study a \emph{degenerate} two-sample test statistic, which is not considered in any of these works.   

We remark that Proposition \ref{prop3} provides a similar bound to Theorem 5 of \citet{levin_bootstrapping_2025} and Theorem 4.4 of \citet{lei_network_2020}, both of which consider convergence of empirical distributions to the corresponding latent position distribution under the single-graph setting, though in both of these works they require that the orthogonal matrix  in the eigendecomposition of $\E(XX\t)$ is block diagonal. 
As a counterexample, consider the following GRDPG model.  Let $\mathbf{B} \in [0,1]^{K\times K}$ be a symmetric connectivity matrix of rank $K$, and let $\mathbf{VDV\t}$ be its eigendecomposition.  Let $Z_i \sim $Dirichlet$(\alpha)$ for some $\alpha \in \R^K$, and define $X_i = \mathbf{V |D|}^{1/2} Z_i$, which is a valid GRDPG distribution.  Then the matrix $\E(XX\t)$ has a block-orthogonal eigendecomposition if and only if $\mathbf{V |D|}^{1/2} \E(ZZ\t) \mathbf{|D|}^{1/2} \mathbf{V}\t$ does, where $\E(ZZ\t)$ is the second moment matrix for a Dirichlet random variable.  
If $\alpha$ is the all ones vector, then
\begin{align*}
     \mathbf{V |D|}^{1/2} \E(ZZ\t) \mathbf{|D|}^{1/2} \mathbf{V}\t &= \frac{K}{K+1}\mathbf{V |D| V\t} + \frac{1}{K(K+1)} \mathbf{V|D|}^{1/2} \mathbf{1 1\t} \mathbf{|D|}^{1/2} \mathbf{V\t}.
\end{align*}
The example $\mathbf{B} = -.1\mathbf{I} + .2 \mathbf{11\t}$ yields an orthogonal matrix that is not block-diagonal.  Hence, even though assuming the eigendecomposition of $\E(XX\t)$ has a block diagonal structure is an attractive assumption amenable to theoretical analysis, this assumption can be violated by many different models.

Because of the prevalence of spectral methods in the literature, estimation of $\mathbf{\hat W}_n$ arises often in related inference tasks.  For example, \citet{zhang_unseeded_2018} proposes solving a smooth function of the Laplace distance between distributions to estimate $\mathbf{\hat W}_n$, and \citet{li_two-sample_2018}, operating under the stochastic blockmodel, consider estimating $\mathbf{\hat W}_n$ by minimizing over the community memberships. 
Indeed, both methods are practically similar to ours, and may provide comparable results in practice, though we believe we are the first to apply it to nonparametric hypothesis testing and to provide asymptotic statistical guarantees under both the null and alternative hypotheses.  
Furthermore, Optimal Transport-Procrustes has been used to some success in the literature on natural language processing.  Though our methodology is similar to \citet{alvarez-melis_towards_2019}, other methods have been proposed for numerically solving the problem (e.g. \citet{grave_unsupervised_2019}).

Finally, we mention that though our algorithm is based on entropy-regularized Wasserstein distance, our results are stated in terms of the unregularized Wasserstein distance.  
While it may be possible to extend results on regularized optimal transport (e.g. \citet{gangrade_efficient_2019,bigot_central_2019}) to the mixed continuous and discrete setting implicitly required for our purposes, such an extension would require nontrivial analysis of the regularized Sinkhorn distance between $\xhat$ and $\yhat$. 



\section{Simulations}
\label{sec:sims}


\textcolor{black}{For all simulations we fix $n = m$ and $\alpha _n = \beta_m$.} We first consider the balanced homogeneous stochastic blockmodel defined as follows.  First, we generate $\aone$ as a stochastic blockmodel with $\mathbf{B}\one$ fixed as
\begin{align*}
    \mathbf{B}^{(1)}&=\begin{pmatrix}.4 & .8 & .8 \\ .8 & .4 & .8 \\ .8 & .8 & .4\end{pmatrix},
\end{align*}
with equal probability of community membership.  We then generate $\atwo$ independently from $\aone$ with probability matrix $\mathbf{B}\one + \textcolor{black}{\nu} \mathbf{I}$ for $\textcolor{black}{\nu}  \in \{0,.05,.1,.15,.2\}$.  Note that all these choices of $\textcolor{black}{\nu} $ still yield a probability generating matrix with $p =1$ and $q = 2$.  We run 200 simulated permutation tests with 200 permutations and choose the critical value for $\alpha = .05$.  We run this for different choices of sparsity with $\alpha_n = \beta_m \in \{1,.8.,.6\},$ corresponding to sparser and sparser networks respectively.  

\begin{figure}[t]
 \subfloat{%
            \includegraphics[width=.3\linewidth,height=60mm]{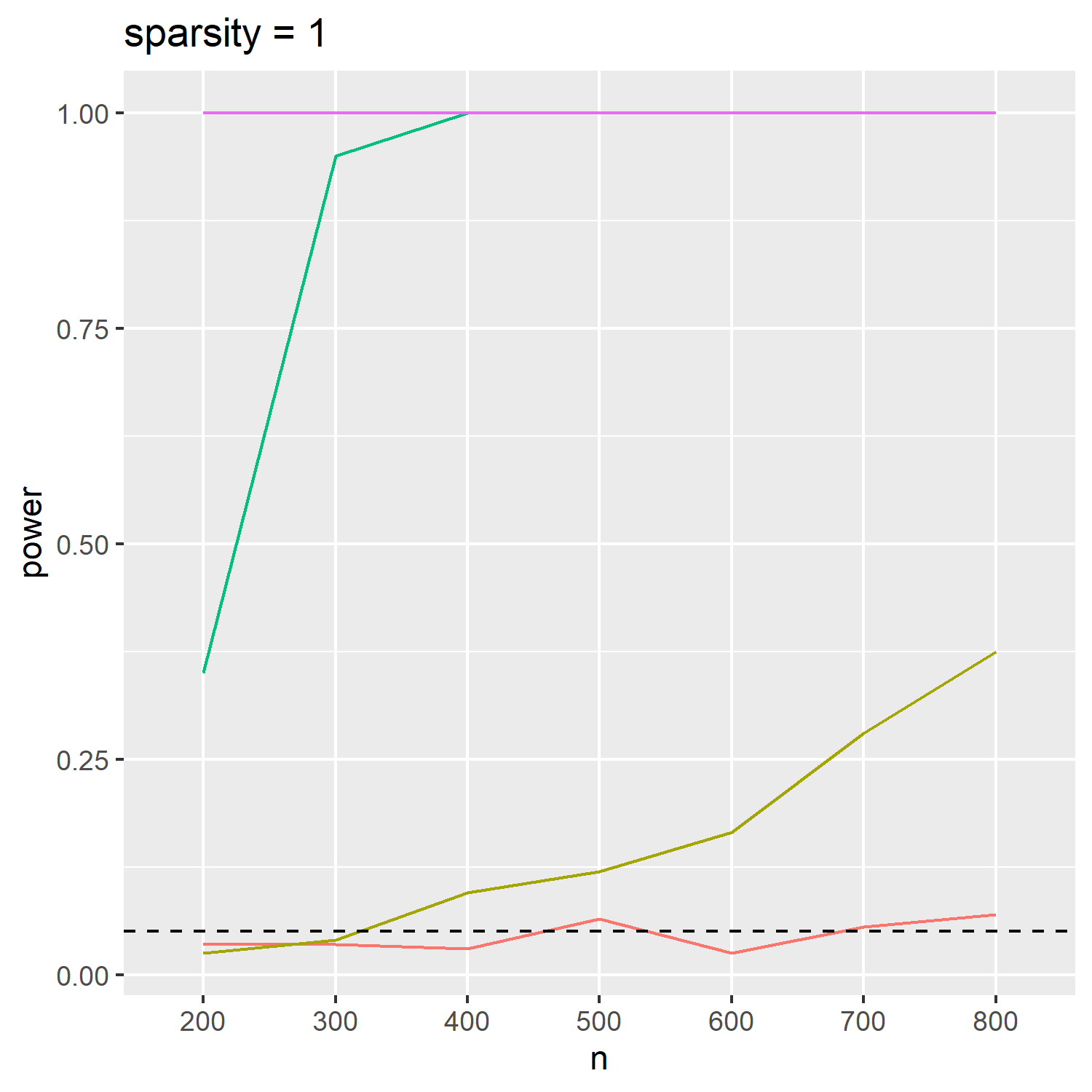}%
            \label{subfig:a}%
        }
        \subfloat{%
            \includegraphics[width=.3\linewidth,height=60mm]{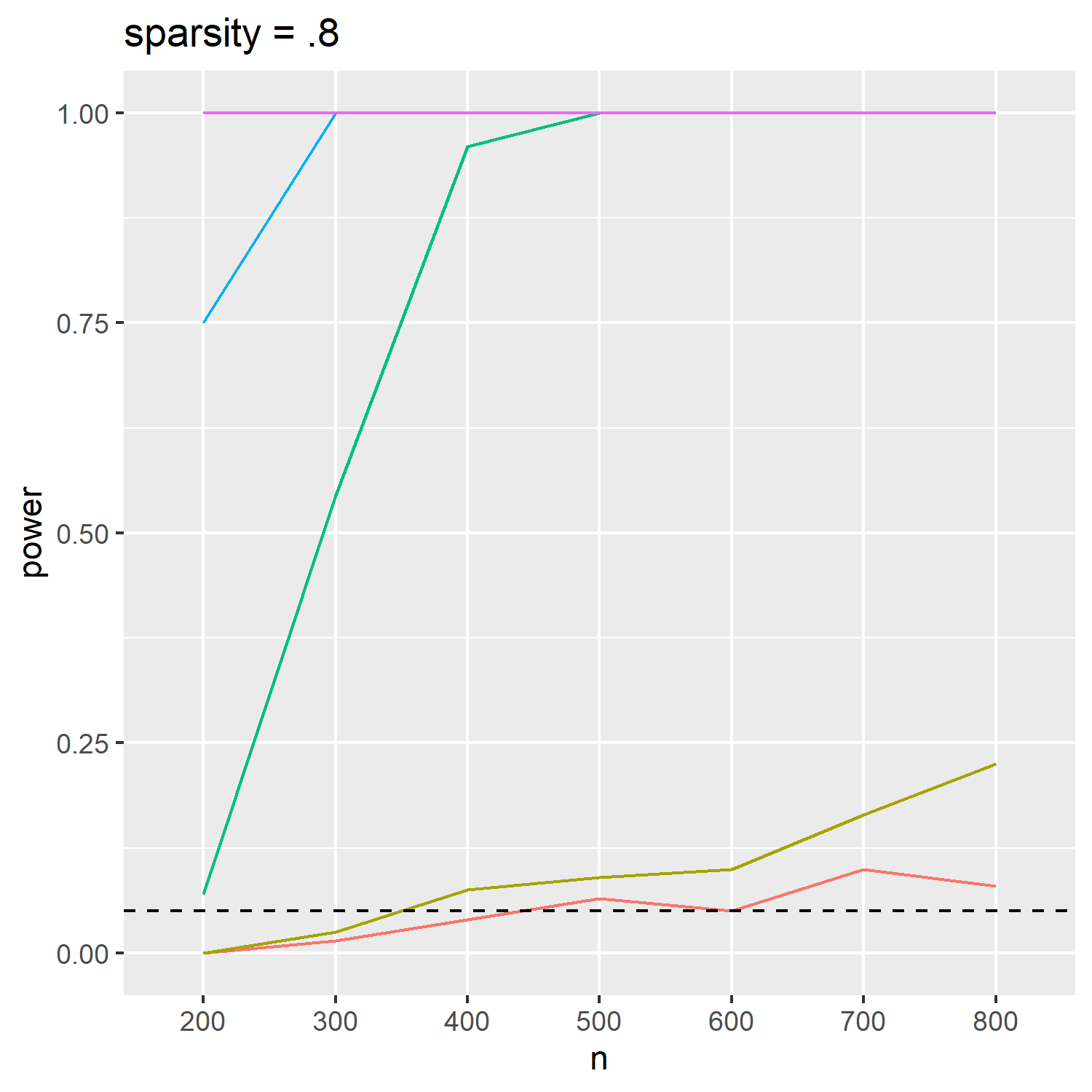}%
            \label{subfig:b}%
        }
    \subfloat{%
            \includegraphics[width=.36\linewidth,height=60mm]{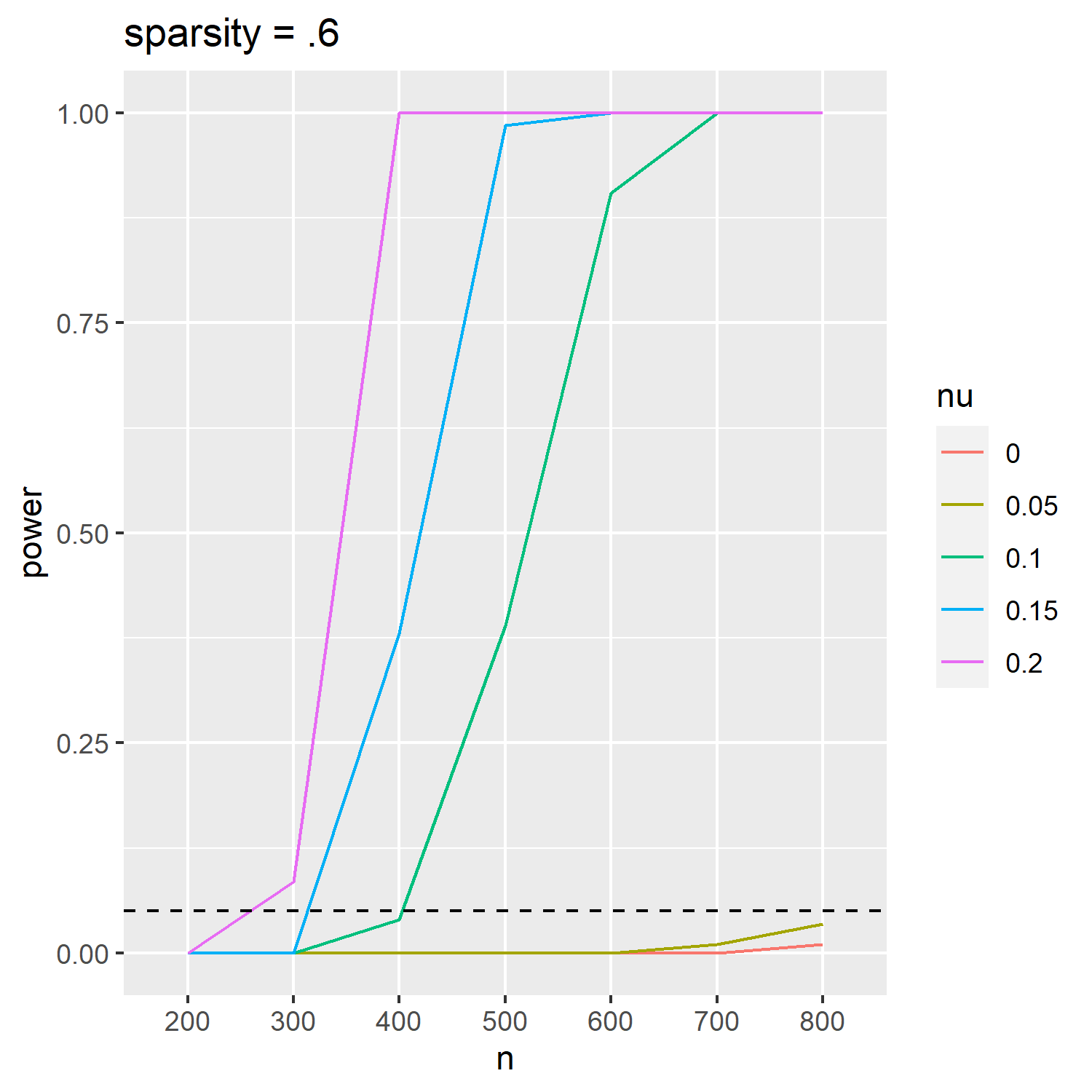}%
            \label{subfig:b}%
        }
    \caption{Hypothesis testing results with different values of $\nu$, where $\nu$ represents a local departure from the null hypothesis described in \cref{sec:sims}. As $n$ increases, it is  clear that power increases, with power tending to one slower for sparser networks, which reflects our theory. The dotted line represents level $.05$. \textcolor{black}{When curves overlap, the larger value of $\nu$ takes precedence.}   \textcolor{black}{For the value of $\nu = .05$ representing the smallest local departure from the null, the test is only able to detect when $n$ is large and the networks are sufficiently dense. Indeed, in the rightmost figure, the power curve nearly overlaps with that of the null hypothesis. }
    }   \label{fig:sbm}
\end{figure}

 In Figure \ref{fig:sbm} the results of this simulation are plotted, with power tending to increase as a function of $n$.  In addition, sparser networks result in slower power improvement, with $\alpha_n = .6$ (right) corresponding to the sparsest network.  While the tests are not exactly valid at level .05, this may be because the test is naturally a little conservative, as we minimize over all orthogonal matrices, meaning that the observations are not precisely exchangeable under the null, as required for a bootstrap test.  However, the observations are \emph{nearly} exchangeable, which is what allows the test to function properly under alternatives.  \textcolor{black}{For $\nu = .05$ (the gold line), the test struggles slightly to detect this departure, though for much larger values of $n$ and sparser graphs its power increases.  However, for the rightmost figure, this local departure is nearly undetectable.  }
Next, we consider local alternatives from the stochastic blockmodel as quantified through additional degree-corrections.  Explicitly, we keep $\mathbf{B}^{(1)} = \mathbf{B}^{(2)}$, only now varying the distributions on the degree-correction parameters.  In the first graph the degree-correction parameters are fixed at the constant $.5$, which corresponds to a fairly sparse network.  In the second graph we vary the degree-corrections by drawing them uniformly $(.5-\gamma,.5+\gamma)$, for $\gamma \in \{0,.1,.2,.3,.4\}$, resulting in higher degree heterogeneity as $\gamma$ increases. This model also results in $p =1$ and $q = 2$. In addition, we consider $\alpha_n = \beta_m \in \{1,.8,.6\}$. This null departure can be understood as a DCSBM departure from an SBM null hypothesis, which is permitted in our hypothesis testing framework. 


\begin{figure}[t]
 \subfloat{%
            \includegraphics[width=.3\linewidth,height=60mm]{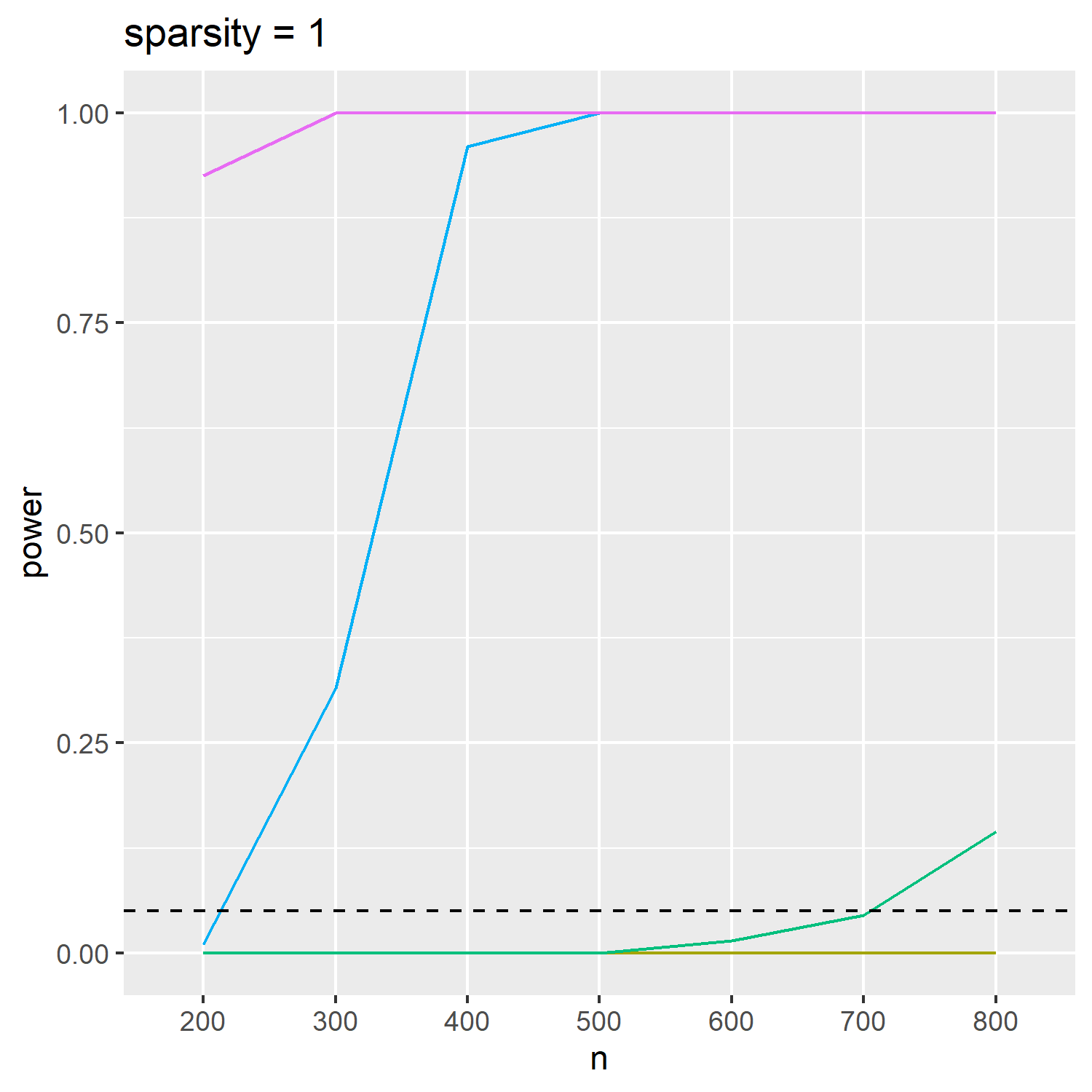}%
            \label{subfig:a}%
        }
        \subfloat{%
            \includegraphics[width=.3\linewidth,height=60mm]{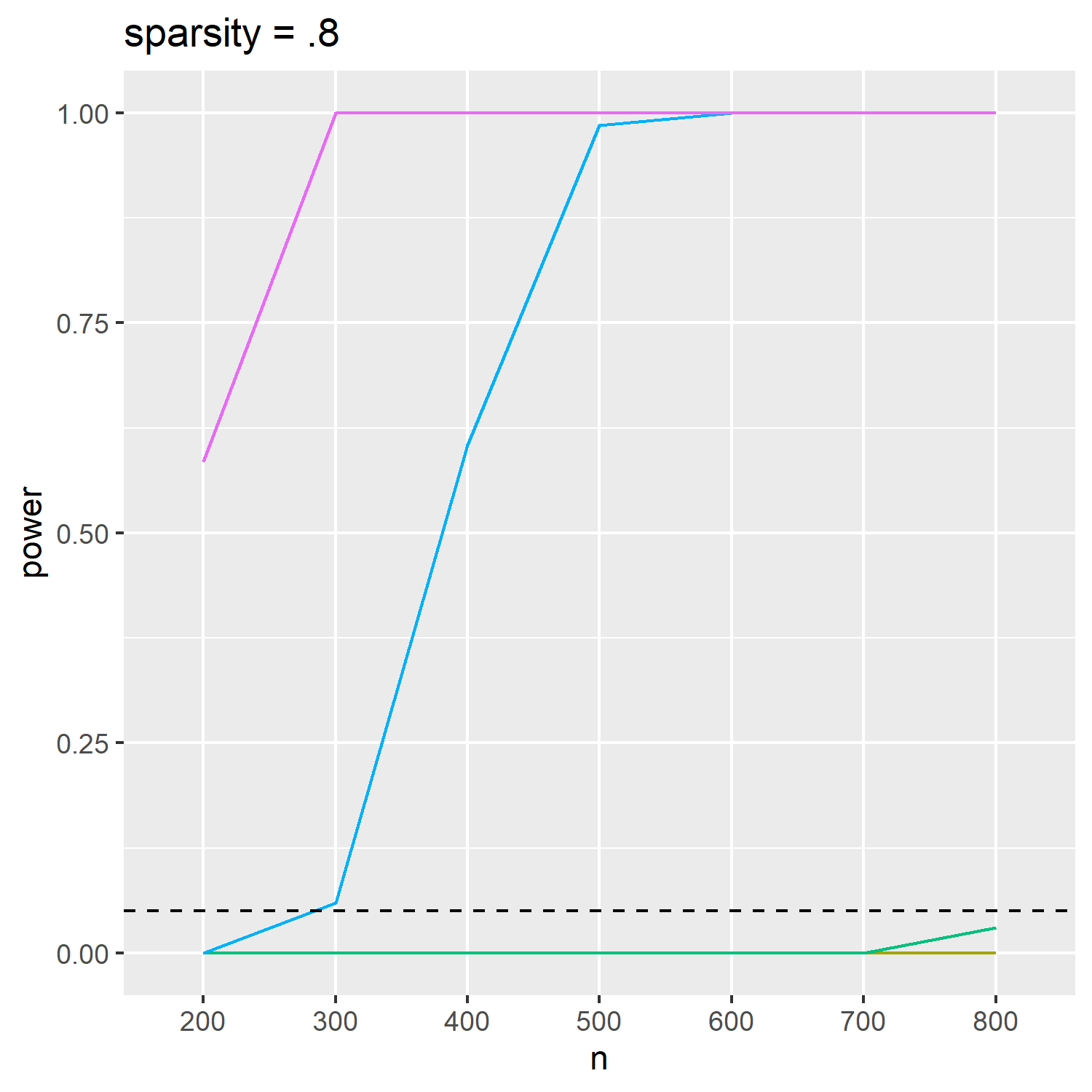}%
            \label{subfig:b}%
        }
    \subfloat{%
            \includegraphics[width=.36\linewidth,height=60mm]{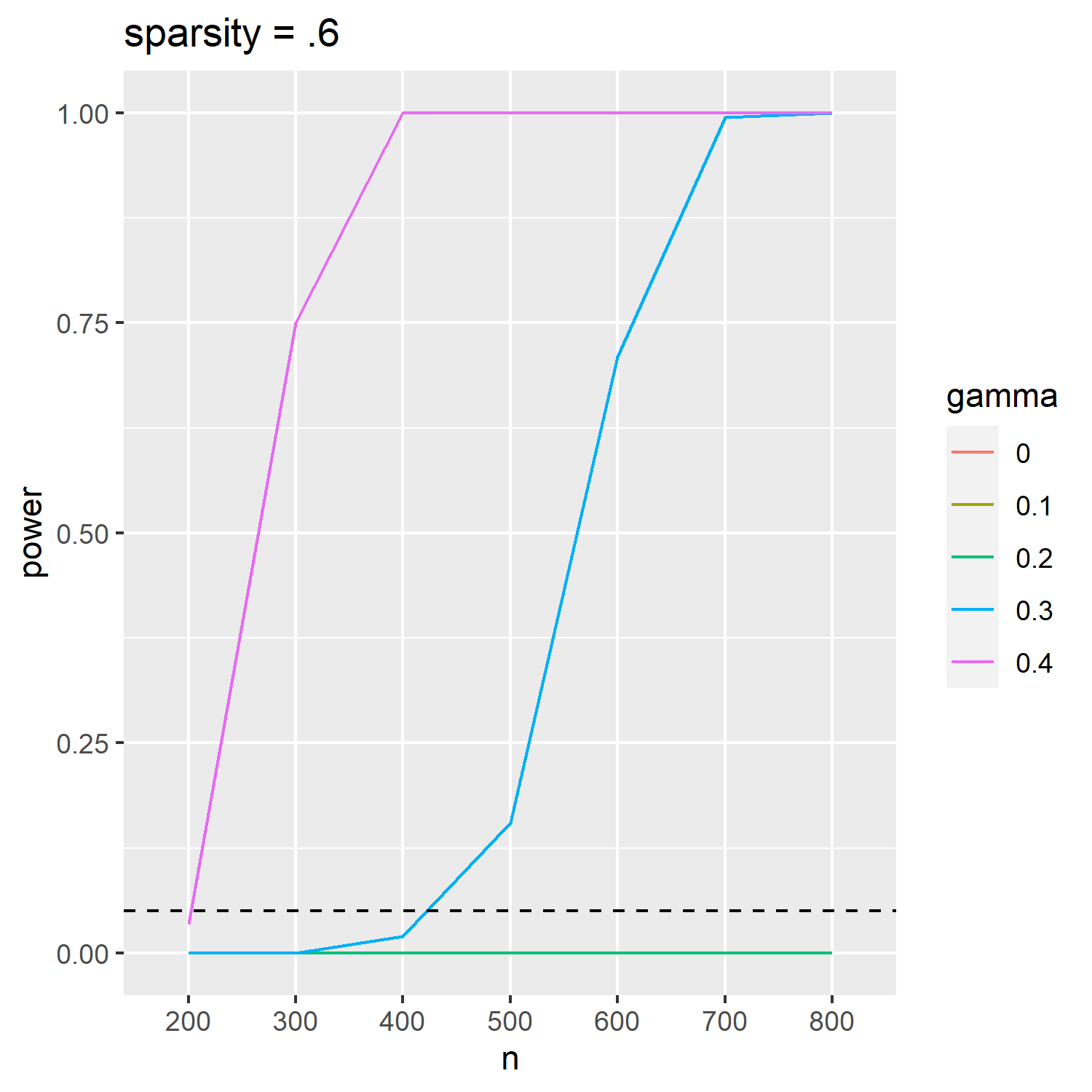}%
            \label{subfig:b}%
        }
    \caption{Hypothesis testing results for degree-corrected stochastic blockmodel departures from the null hypothesis, with $\gamma$ increasing representing more degree heterogeneity.  As $n$ increases power tends to increase, though the improvement is slower, as these networks are much sparser than the previous example.  \textcolor{black}{When curves overlap, the larger value of $\gamma$ takes precedence.} \textcolor{black}{For example, in the rightmost figure (sparsity $= .6$), the power curves associated to $\gamma \in \{0,.1,.2\}$ completely overlap at zero.}  }   \label{fig:dcsbm}
\end{figure}

In Figure \ref{fig:dcsbm} the results of this simulation are plotted, with sparsity decreasing left to right.  As in the previous example, the power tends to increase as $n$ increases, though at a slower rate for sparser networks.  Since this model is sparser than the previous model (average expected degree starting at half of the previous example), the improvements are not as stark, though all examples seem to reject consistently for large values of $\gamma$ (corresponding to high degree heterogeneity).  \textcolor{black}{However, the test struggles to detect local departures for $\gamma \in \{.1,.2\}$ except in the leftmost figure, as all of these lines completely overlap in the right two figures. }

\section{Discussion}
\label{sec:discussion}
We have shown  that a test statistic defined by using the maximum mean discrepancy applied to the rows of the adjacency spectral embedding yields a consistent test in a natural asymptotic regime as the number of vertices tends to infinity.  
The methodology we propose shows that solving the optimal transport problem estimates the orthogonal matrix stemming from the eigenvalue multiplicity of the matrices $\E(XX\t)\ipq$ and $\E(YY\t)\ipq$, which is implicit in the proofs of our results.  
While our optimization scheme alternates between points, we note that we have not proven that it yields a globally optimum solution in general, and many different initializations may be required to find the global minimizer.  

Our results show that the $U$-statistic associated to the reproducing kernel yields consistent testing under appropriate edge density; determining the exact nondegenerate limiting distribution is yet an open problem.  
The proof reveals that it will depend on the asymptotic distribution of the difference of indefinite orthogonal matrices $\sqrt{n}(\mathbf{Q_X -\tilde Q_X })$ in Section \ref{sec:proofs}, but for practical purposes, this is irrelevant, as the resulting limit will not be independent of $F_X$ and $F_Y$ in general.    
While exact derivation of the limiting distribution is complicated, we note that our procedure yields a consistent test through a simple bootstrapping procedure. Our main results have demonstrated that only repeated eigenvalues (and not negative eigenvalues) require any modification to obtain consistency for two graph hypothesis testing.

As in \citet{tang_nonparametric_2017}, one can also extend our methodology to determine whether $F_X \simeq F_Y \circ c$ for some constant $c > 0$, or for the setting $F_X \circ \pi \simeq F_Y \circ \pi$, where $\pi$ is the projection onto the sphere.  Here $F_X \simeq F_Y \circ c$ means that $F_X \simeq F_{cY}$, and $F_X \circ \pi \simeq F_Y \circ \pi$ means that $F_{\pi(X)} \simeq F_{\pi(Y)}$.  For $c$ appropriately defined so that $F_Y \circ c$ is a valid $(p,q)$-admissible distribution, one can use the estimates
\begin{align*}
    \hat s_{X} = n^{-1/2} \| \xhat \|_F; \quad  \hat s_Y = m^{-1/2}\|\yhat\|_F,
\end{align*} 
and hence, by Lemma \ref{lem:frobenius}, these are consistent estimates of the parameters 
\begin{align*}
    s_X = n^{-1/2} \| \mathbf{\tilde X}\|_F; \quad s_Y = m^{-1/2} \| \mathbf{\tilde Y} \|_F.
\end{align*}
Similarly, one can project the estimates $\xhat$ and $\yhat$ to the unit sphere to test whether $F_Y \circ \pi \simeq F_Y \circ \pi$.  
 
It remains an open question as to whether our results can be extended to graphons or other random graph models.  
In addition, while we have demonstrated that estimation of sparsity is sufficient to obtain consistency with the same scaling as in \citet{gretton_kernel_2012}, graphs with average expected degree below the $\sqrt{n}$ threshold may require further analysis of higher-order terms to obtain the same scaling.  



\section*{Acknowledgements}
The authors thank Mao Hong, Zachary Lubberts, and Joshua Cape for valuable feedback on an early draft of this paper. The first author also gratefully acknowledges funding from the Charles and Catherine Counselman Fellowship and the Johns Hopkins Mathematical Institute for Data Science (MINDS)  Fellowship. This work is partially supported by DARPA D3M under grant FA8750-17-2-0112.

\appendix
\section{Proofs of Main Results} \label{sec:proofs}

In this section we first prove our main results \cref{thm:consistency_repeated} and \cref{thm:centering}.  In \cref{sec:proof_cors} we prove Corollaries \ref{cor:scaling} and \ref{cor:scaling_estimate}.  
We then prove the results in \cref{sec:algorithm} in \cref{sec:proofsprops}.  Our main results also require a number of auxiliary technical results; these are proven in \cref{sec:proof_frob}, \cref{sec:proof_fclt}, and \cref{sec:proofs_lems} respectively.

Our proofs require careful tabulation of the various alignment matrices.  All orthogonal matrices appearing in the following proofs are written with the letter $\mathbf{W}$ and all indefinite orthogonal matrices are written with the letter $\mathbf{Q}$, and we allow the constants implicit in the notation $O(\cdot)$ to depend on $d$ in an arbitrary manner.  Finally, in our proofs, we will provide bounds with a constant $C$ that may change from line to line.

\textcolor{black}{Our results require analyzing a canonical choice of $\mathbf{X}$ (recall it is  defined only up to indefinite orthogonal matrix $\mathbf{Q}$).  }
 Throughout this section, recall that we define
\begin{align*}
    \mathbf{\tilde X} := \mathbf{U_X |\Lambda_X|}^{1/2}; \qquad \mathbf{P}\one = \mathbf{U_X \Lambda_X U_X\t} = \alpha_n \mathbf{X}\ipq \mathbf{X\t}
\end{align*}
with similar notation for $\mathbf{\tilde Y}$ and $\mathbf{P}\two$.  \textcolor{black}{We define the matrix $\mathbf{Q_X}$ as the matrix that satisfies}
\begin{align}
    \mathbf{\tilde X} = \alpha_n^{1/2}\mathbf{XQ_X\inv} \label{qx},
\end{align}
\textcolor{black}{with $\mathbf{Q_Y}$ defined similarly.}
Note that $\mathbf{\tilde X}$ and $\xhat$ depend on the sparsity $\alpha_n$, but the matrix $\mathbf{X}$ does not (since its rows are i.i.d. $F_X$), and that the matrix $\mathbf{\tilde X}$ can be thought of as the adjacency spectral embedding of the probability generating matrix $\mathbf{P} = \alpha_n \mathbf{X}\ipq \mathbf{X}\t$. See also Figure \ref{tab:diagram} below.

The following result is an adaptation of Theorem 1 from \citet{agterberg_two_2020} for the two graph setting.  The proof is straightforward and included in Section \ref{sec:proofs_lems}.  

\begin{lemma} \label{lem2}
Let $\mathbf{Q_X}$ be such that $\mathbf{U_X |\Lambda|}^{1/2} \mathbf{Q}_X = \alpha_n^{1/2}\mathbf{X}$, and $\mathbf{Q_Y}$ similarly.  Then there exist deterministic matrices $\mathbf{\tilde Q_X}$ and $\mathbf{\tilde Q_Y}$ and sequences of block-orthogonal matrices $\mathbf{W_X}$ and $\mathbf{W_Y}$ such that $\mathbf{Q_X - W_{X} \tilde Q_X} \to 0$ and similarly for $\mathbf{Q_Y}$ and $\mathbf{W_Y}$.  In particular, under the null hypothesis there exists some $\mathbf{T}$ such that $F_X \circ \mathbf{T} = F_Y$, in which case $\mathbf{\tilde Q_Y}$ can be chosen such that $\mathbf{\tilde Q_Y} =\mathbf{\tilde Q_X T}$.  
\end{lemma}

The next result concerns the approximation of the matrices $\mathbf{Q_X}$ and $\mathbf{Q_Y}$ to their population counterparts.   The proof is also in Section \ref{sec:proofs_lems}.

\begin{lemma}\label{lem:qtilde}
Let $\mathbf{Q_X}$ and $\mathbf{\tilde Q_X}$ and $\mathbf{W_X}$ be as in \cref{lem2}.  Then with probability 
at least $1 - n^{-2}$ it holds that
\begin{align}
    \|\mathbf{Q_X} - \mathbf{W_X}\mathbf{\tilde Q_X }\| &= O\left( \frac{\sqrt{\log(n)}}{\sqrt{n}} \right). \label{bound4}
\end{align}
The results continue to hold with $\mathbf{Q_X},\mathbf{\tilde Q_X},\mathbf{W_X}$ replaced with $\mathbf{Q_Y,\tilde Q_Y}$ and $\mathbf{W_Y}$ respectively.  
\end{lemma}

Finally, we need the following slight restatement of Theorem 5 of \citet{rubindelanchy_statistical_2022}.  

\begin{theorem}[Theorem 5 of \citet{rubindelanchy_statistical_2022}] \label{thm:grdpg}
Let $(\mathbf{A},\mathbf{X})\sim GRDPG(F_X,n,\alpha_n)$ for $n\alpha_n  \gg \log^{4}(n)$.  Let $\mathbf{Q_X}$ be the matrix such that $\mathbf{U_X |\Lambda|}^{1/2}\mathbf{Q_X = X}$.  Then there exists an orthogonal matrix $\mathbf{W_*^X} \in \mathbb{O}(d) \cap \mathbb{O}(p,q)$ depending on $n$ such that with probability at least $1 - n^{-2}$
\begin{align*}
    \| \xhat  \mathbf{W_*^X} - \mathbf{\tilde X } \|_{2,\infty} &= O\bigg( \frac{\log(n)}{n^{1/2}}\bigg).
\end{align*}
\end{theorem}

The next lemma is a technical result concerning the Frobenius norm concentration of $\mathbf{\hat X}$ to $\mathbf{\tilde X}$ and is needed to guarantee the existence of the specific matrices $\wx$ and $\wy$, though it is also used in the proof of Lemma \ref{lem:fclt}.  We note that similar results were proven in \citet{tang_semiparametric_2017} and \citet{tang_limit_2018} in the setting of random dot product graphs, and in \citet{athreya_numerical_2023} in the setting of numerical linear algebra for random matrices.  The asymptotic normality of a related Frobenius norm error for stochastic blockmodels was proven in \citet{li_two-sample_2018}.  However, the following lemma requires a number of additional considerations for negative eigenvalues not considered in previous works. The proof is in Section \ref{sec:proof_frob}.  

\begin{lemma} \label{lem:frobenius}
Let $(\mathbf{A, X}) \sim GRDPG(F_X, n, \alpha_n )$ for some $\alpha_n$ satisfying $n\alpha_n \geq \omega(\log^4(n))$.  Asymptotically almost surely, for the sequence of block-orthogonal matrices $\mathbf{W_*^X}$  from Theorem \ref{thm:grdpg}, the matrix $\xhat \mathbf{W_*^X}- \mathbf{\tilde X }$ admits the decomposition
\begin{align*}
    \xhat \mathbf{W_*^X} - \mathbf{\tilde X}= (\mathbf{A - P)U_X |\Lambda_X|}^{-1/2}\ipq  + \mathbf{R} 
\end{align*}
where the matrix $\mathbf{R}$ satisfies
\begin{align*}
    \|\mathbf{R}\|_F = O \left( \sqrt{\frac{\log(n)}{n\alpha_n}} \right)
\end{align*}
with high probability. Furthermore, also with high probability,
\begin{align*}
    \left| \|\xhat \mathbf{W_*^X} - \mathbf{\tilde X} 
    \|_F^2 - C^2(\mathbf{\tilde X}) \right| = O\left( \sqrt{\frac{\log(n)}{n\alpha_n}}\right),
\end{align*}
where 
\begin{align*}
C^2(\mathbf{\tilde X}) :&= \E\|(\mathbf{A- P}) \mathbf{U |S|^{-1/2}}\|_F^2.
\end{align*}
where the expectation is with respect to the randomness in $\mathbf{A}$. Finally, as $n \to \infty$, we have that
\begin{align*}
   \| \xhat \mathbf{W_*^X}- \mathbf{\tilde X } \|_F^2 \to \tr \bigg( \mathbf{\tilde Q\inv \Delta\inv \Gamma \Delta\inv \tilde Q^{-\top}} \bigg).
\end{align*}
almost surely, where $\Gamma$ is defined via
\begin{align*}
    \Gamma := \begin{cases} \E[ X X\t ( X\t \ipq \mu - X\t \ipq \Delta\ipq X)] & \alpha_n \equiv 1 \\
     \E[ X X\t ( X\t \ipq \mu )] & \alpha_n \to 0, \end{cases}
\end{align*}
with $\mu = \E X$ and $\Delta = \E XX\t$.
\end{lemma}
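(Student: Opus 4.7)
The plan is to proceed via a three-step eigenvector-perturbation argument: linearize the adjacency spectral embedding around the population embedding, concentrate the squared Frobenius norm of the leading linear term, and then identify the almost-sure limit.

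For the decomposition, I would write $\mathbf{\hat X}$ as $\mathbf{U_A}|\mathbf{\Lambda_A}|^{1/2}$ (the top-$d$ eigenvectors of $\mathbf{A}$ scaled by the square roots of the absolute eigenvalues) and expand around $\mathbf{U_X}|\mathbf{\Lambda_X}|^{1/2} = \mathbf{\tilde X}$ via a Neumann/Taylor series for eigenvectors. Using the identity $|\mathbf{\Lambda_X}|\ipq = \mathbf{\Lambda_X}$, the first-order contribution reduces cleanly to $(\mathbf{A}-\mathbf{P})\mathbf{U_X}|\mathbf{\Lambda_X}|^{-1/2}\ipq$ after absorbing the indefinite-orthogonal nonidentifiability into the block-orthogonal matrix $\mathbf{W_*}$ from Theorem \ref{thm:grdpg}; all higher-order terms are collected into $\mathbf{R}$. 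The hard part is bounding $\|\mathbf{R}\|_F$ at the stated rate, since the $\ell_{2,\infty}$ bound from Theorem \ref{thm:grdpg} only yields $\|\mathbf{R}\|_F = O(\log n)$ after naively multiplying by $\sqrt n$. I would instead bound the second-order eigenvector perturbation directly via $\|(\mathbf{A}-\mathbf{P})^2 \mathbf{U_X}|\mathbf{\Lambda_X}|^{-3/2}\|_F \lesssim \|\mathbf{A}-\mathbf{P}\|^2 \cdot \|\mathbf{U_X}|\mathbf{\Lambda_X}|^{-3/2}\|_F$, and combine with $\|\mathbf{A}-\mathbf{P}\| = O(\sqrt{n\alpha_n})$ and $|\lambda_d|\asymp n\alpha_n$ (small sign corrections being absorbed into $\mathbf{W_*}$).

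For the Frobenius concentration, set $Z := \|(\mathbf{A}-\mathbf{P})\mathbf{U_X}|\mathbf{\Lambda_X}|^{-1/2}\|_F^2 = \tr\bigl((\mathbf{A}-\mathbf{P})^2 \mathbf{U_X}|\mathbf{\Lambda_X}|^{-1}\mathbf{U_X}^{\top}\bigr)$. Since $\|\mathbf{U_X}|\mathbf{\Lambda_X}|^{-1}\mathbf{U_X}^{\top}\| = O(1/(n\alpha_n))$, the random variable $Z$ is a quadratic form in the independent Bernoulli entries of $\mathbf{A}-\mathbf{P}$, so a Hanson--Wright bound (conditionally on $\mathbf{X}$) yields $|Z - \E[Z\mid\mathbf{X}]| = O(\sqrt{\log n/(n\alpha_n)})$ with high probability. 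Combining this with the previous bound on $\|\mathbf{R}\|_F$ and a Cauchy--Schwarz cross-term produces the stated concentration of $\|\mathbf{\hat X} - \mathbf{\tilde X}\mathbf{W_*}\|_F^2$ around $C(\mathbf{\tilde X})^2 = \E[Z]$.

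For the almost-sure limit, note that $\E[(\mathbf{A}-\mathbf{P})^2] = \mathbf{D}$ is diagonal with $\mathbf{D}_{ii} = \sum_k \mathbf{P}_{ik}(1-\mathbf{P}_{ik})$ by conditional edge independence. Using $\mathbf{\tilde X} = \alpha_n^{1/2}\mathbf{X}\mathbf{Q_X}^{-1}$ to write $\mathbf{U_X}|\mathbf{\Lambda_X}|^{-1}\mathbf{U_X}^{\top} = \alpha_n \mathbf{X}\mathbf{Q_X}^{-1}|\mathbf{\Lambda_X}|^{-2}\mathbf{Q_X}^{-\top}\mathbf{X}^{\top}$, one obtains
\begin{align*}
\E Z = \tr\bigl(\mathbf{Q_X}^{-1}[|\mathbf{\Lambda_X}|/(n\alpha_n)]^{-2}\mathbf{Q_X}^{-\top}\cdot \mathbf{X}^{\top}\mathbf{D}\mathbf{X}/(n^2\alpha_n)\bigr).
\end{align*}
The strong law then gives $\mathbf{X}^{\top}\mathbf{D}\mathbf{X}/(n^2\alpha_n)\to\Gamma$, with $\Gamma = \E[XX^{\top}(X^{\top}\ipq\mu)]$ in the sparse regime (since $\mathbf{P}_{ik}(1-\mathbf{P}_{ik})\approx\mathbf{P}_{ik}$), while the extra $(1-\mathbf{P}_{ik})$ factor contributes the $-X^{\top}\ipq\Delta\ipq X$ correction in the dense regime. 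Combined with Lemma \ref{lem:qtilde} (namely $\mathbf{Q_X}\to\tilde Q$ and $|\mathbf{\Lambda_X}|/(n\alpha_n)\to\tilde Q^{-\top}\Delta\tilde Q^{-1}$ almost surely), this produces the stated trace limit.
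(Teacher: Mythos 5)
Your overall architecture is the same as the paper's: linearize $\xhat\mathbf{W_*}^\top - \mathbf{\tilde X}$ around the leading term $(\mathbf{A}-\mathbf{P})\mathbf{U_X}|\mathbf{\Lambda_X}|^{-1/2}\ipq$, concentrate $Z := \|(\mathbf{A}-\mathbf{P})\mathbf{U_X}|\mathbf{\Lambda_X}|^{-1/2}\|_F^2$ about its conditional mean, then identify the limit of $\E[Z\mid\mathbf{X}]$ by the law of large numbers together with the convergence of $\mathbf{Q_X}$ and of $|\mathbf{\Lambda_X}|/(n\alpha_n)$ from (the proof of) Lemma \ref{lem:qtilde}. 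The paper obtains the decomposition and the bound $\|\mathbf{R}\|_F = O(\sqrt{\log n/(n\alpha_n)})$ by citing the residual's $2,\infty$ bound inside the proof of Theorem 5 of \citet{rubin-delanchy_statistical_2020}, uses a bounded-differences/self-bounding concentration argument (Lemma \ref{lem:numtol}, via \citet{boucheron_concentration_2003}) for $Z$, and then performs the same trace computation you outline (routed through $n(\mathbf{X}^\top\mathbf{X})^{-1}\to\mathbf{\Delta}^{-1}$ rather than through $|\mathbf{\Lambda_X}|/(n\alpha_n)$; your identity $\E[Z\mid\mathbf{X}] = \alpha_n\operatorname{tr}(\mathbf{Q_X}^{-1}|\mathbf{\Lambda_X}|^{-2}\mathbf{Q_X}^{-\top}\mathbf{X}^\top\mathbf{D}\mathbf{X})$ is correct and equivalent). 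Two cautions on this last step: the convergence of $\mathbf{Q_X}$ holds only up to the block-orthogonal $\mathbf{W_X}$, so you must note that $\mathbf{W_X}$ commutes with $|\tilde{\mathbf{\Lambda}}|^{-2}$ blockwise; and when matching your expression $\operatorname{tr}(\mathbf{\tilde Q}^{-1}|\tilde{\mathbf{\Lambda}}|^{-2}\mathbf{\tilde Q}^{-\top}\Gamma)$ (which equals $\operatorname{tr}(\mathbf{\Delta}^{-1}\mathbf{\tilde Q}^\top\mathbf{\tilde Q}\mathbf{\Delta}^{-1}\Gamma)$ via $|\tilde{\mathbf{\Lambda}}|^{-1}=\mathbf{\tilde Q}\mathbf{\Delta}^{-1}\mathbf{\tilde Q}^\top$) to the displayed form, you need to track the $\ipq$ factors carefully — the same bookkeeping the paper elides in its own display.

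There are, however, two genuine gaps in the tools as you state them. First, the residual $\mathbf{R}$ of the ASE expansion is not only the second-order eigenvector term $(\mathbf{A}-\mathbf{P})^2\mathbf{U_X}|\mathbf{\Lambda_X}|^{-3/2}$: it also contains the eigenvalue perturbation $|\mathbf{\Lambda_A}|^{1/2}$ versus $|\mathbf{\Lambda_X}|^{1/2}$, the misalignment of $\mathbf{U_A}^\top\mathbf{U_X}$ with $\mathbf{W_*}$, and commutator-type terms such as $\mathbf{U_A}^\top\mathbf{U_X}\mathbf{\Lambda_X} - \mathbf{\Lambda_A}\mathbf{U_A}^\top\mathbf{U_X}$; for these a pure spectral-norm bound is order $\sqrt{n\alpha_n}$ and hence useless, and one needs Hoeffding/Bernstein control of terms like $\mathbf{U_X}^\top(\mathbf{A}-\mathbf{P})\mathbf{U_X}$ to recover the $\sqrt{\log n}$ factors — or simply quote the residual bound from the cited proof, as the paper does. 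Second, and more seriously, the plain Hanson--Wright inequality with the centered Bernoulli entries treated as $O(1)$-sub-Gaussian does not give $|Z-\E[Z\mid\mathbf{X}]| = O(\sqrt{\log n/(n\alpha_n)})$ when $\alpha_n\to 0$: with $\|\mathbf{V}\mathbf{V}^\top\|_F^2 \asymp d/(n\alpha_n)^2$ the sub-Gaussian branch of the tail only yields deviations of order $\sqrt{\log n}\,\sqrt{n}/(n\alpha_n)$, i.e.\ a loss of $\alpha_n^{-1/2}$, and under Assumption \ref{sparsity1} this quantity need not even vanish. You need a variance-sensitive bound — a Bernstein-type or sparse Hanson--Wright inequality that uses $\mathrm{Var}(\mathbf{A}_{ij}) = O(\alpha_n)$, or the paper's route of introducing an independent copy $\mathbf{A}'$ and applying the moment/self-bounding inequalities of \citet{boucheron_concentration_2003} to the bounded differences of $Z$.
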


Finally, we present the following functional central limit theorem for the approximation of $\xhat$ to $\mathbf{\tilde X}$ under sparsity.  The proof can be found in Section \ref{sec:proof_fclt}.  The result is a generalization of  Theorem 5 in \citet{tang_nonparametric_2017} to account for both sparsity and indefinite orthogonal transformations.

\begin{lemma} \label{lem:fclt}
Let $(\mathbf{A,X})\sim GRDPG(F_X,n,\alpha_n)$ where $n\alpha_n \gg \log^4(n)$.  Suppose $\mathcal{F} \subset \{f: \R^d \to \R: f$ \textcolor{black}{is twice continuously differentiable}$\}.$   
\textcolor{black}{Suppose that $\sup_{f \in \mathcal{F}}  \| \partial f \|_{\infty} < \infty$ and $\sup_{f \in \mathcal{F}}\| \partial^2 f \| < \infty$, where $\partial f$ is understood as a vector, and $\partial^2 f$ is understood as a matrix.}
Let $\mathbf{\tilde X} := \mathbf{U_X |\Lambda_X|}^{1/2}$, and let $\mathbf{W_*^X}$ be the matrix guaranteed by Lemma \ref{lem:frobenius}. Then, as $n \to \infty$, the empirical process
\begin{align}
    f \in \mathcal{F} \mapsto  \mathbb{\hat G}_n f :=  \sqrt\frac{\alpha_n}{ n} \sum_{i=1}^{n} \left[ f\left( \frac{(\mathbf{W_*^X})\t\hat X_i}{\sqrt{\alpha_n}}\right) - f \left(  \frac{\tilde X_i}{\sqrt{\alpha_n}}\right) \right] \to 0 \label{empiricalprocess}
\end{align}
almost surely as $n \to \infty$.  Moreover, with probability at least $1- O(n^{-2})$, 
\begin{align}
   \sup_{f \in \mathcal{F}}\bigg|  \sqrt\frac{\alpha_n}{ n} \sum_{i=1}^{n} \left[ f\left( \frac{(\mathbf{W_*^X})\t\hat X_i}{\sqrt{\alpha_n}}\right) - f \left(  \frac{\tilde X_i}{\sqrt{\alpha_n}}\right) \right]\bigg| &= O\bigg(\sqrt{\frac{\log(n)}{n\alpha_n}}\bigg). \label{eq10}
\end{align}
In addition, the results hold if $(\mathbf{W_*^X})\t\hat X_i$ and $\tilde X_i$ are replaced with $\hat X_i$ and $\mathbf{W_*^X}\tilde X_i$ respectively.  Finally, if $ \sqrt{n} \alpha_n = \omega( n^{1/2} \log^{1+\eta}(n))$ for some $\eta >0$ then the result in Equation \ref{empiricalprocess} still holds under the scaling $\frac{1}{\sqrt{n}}$ instead of $\frac{\sqrt{\alpha_n}}{\sqrt{n}}$, and in Equation \eqref{eq10} the right hand side bound is of the form $O\bigg(\frac{\log^{1/2}(n)}{n^{1/2}\alpha_n}\bigg).$
\end{lemma}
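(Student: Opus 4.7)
The strategy is a two-term Taylor expansion of each $f \in \mathcal F$ about $\tilde X_i/\sqrt{\alpha_n}$, combined with the row-wise identity
\[
\mathbf{W}_*\hat X_i - \tilde X_i \;=\; \ipq\,|\mathbf{\Lambda_X}|^{-1/2}\mathbf{U_X}\t(\mathbf A - \mathbf P)_i + R_i
\]
supplied by Lemma~\ref{lem:frobenius}. Writing $\hat{\mathbb G}_n f = L_n(f) + Q_n(f)$ for the gradient and Hessian contributions, I would first dispose of $Q_n$ using only the aggregate Frobenius bound $\|\xhat\mathbf{W}_*\t - \xtilde\|_F^2 = O(1)$ coming from Lemma~\ref{lem:frobenius}: by Assumption~\ref{assumption:kappa} the Hessian of $f$ is uniformly bounded on the compact image of the support (Theorem~\ref{theorem:bounded}), so
\[
|Q_n(f)| \;\le\; \frac{C\sqrt{\alpha_n/n}}{\alpha_n}\sum_i \|\mathbf W_*\hat X_i - \tilde X_i\|^2 \;=\; O\!\left(\frac{1}{\sqrt{n\alpha_n}}\right),
\]
which is dominated by the target rate $\sqrt{\log n/(n\alpha_n)}$.

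For the linear term I would substitute the Lemma~\ref{lem:frobenius} expansion to obtain $L_n(f) = L_n^{\mathrm{main}}(f) + L_n^{R}(f)$. The $R$-contribution $L_n^{R}(f) = \frac{1}{\sqrt n}\sum_i \nabla f(\tilde X_i/\sqrt{\alpha_n})\t R_i$ is controlled directly via Cauchy--Schwarz and $\|R\|_F$, giving $|L_n^{R}(f)| \le \|\nabla f\|_\infty\|R\|_F = O(\sqrt{\log n/(n\alpha_n)})$, which matches the claimed rate. The main contribution is
\[
L_n^{\mathrm{main}}(f) \;=\; \frac{1}{\sqrt n}\sum_{i,j} v_{ij}\,(\mathbf A - \mathbf P)_{ij},\qquad v_{ij} := \nabla f(\tilde X_i/\sqrt{\alpha_n})\t\,\ipq\,|\mathbf{\Lambda_X}|^{-1/2}(\mathbf{U_X})_j,
\]
which, conditional on $\mathbf X$, is a weighted sum of independent bounded zero-mean random variables. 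Eigenvector delocalization $\|\mathbf{U_X}\|_{2,\infty} = O(n^{-1/2})$ (obtained from $\sigma_{\min}(\mathbf X) \asymp \sqrt n$ and bounded support of $F_X$), combined with $\||\mathbf{\Lambda_X}|^{-1/2}\| = O((n\alpha_n)^{-1/2})$, yields $\max_{i,j}|v_{ij}| = O(n^{-1}\alpha_n^{-1/2})$ and $\sum_{i,j} v_{ij}^2 = O(\alpha_n^{-1})$, so the Bernstein variance proxy is $(\alpha_n/n)\sum v_{ij}^2 = O(1/n)$. A Bernstein inequality then produces $|L_n^{\mathrm{main}}(f)| = O(\sqrt{\log n/n})$ with conditional probability at least $1-n^{-c}$ for any preassigned $c$, and unconditionally upon integrating out $\mathbf X$.

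Uniformity over $\mathcal F$ is obtained by a standard covering argument: in every invocation of the lemma, $\mathcal F$ is indexed by a compact Euclidean set (typically $\{\kappa(\cdot,y) : y \in \Omega\}$), and a polynomial-size $\eps$-net upgrades the pointwise bound to a uniform one at only a logarithmic price; almost sure convergence then follows from Borel--Cantelli applied to the $O(n^{-2})$ tail. The ``replacement'' variant with $\hat X_i$ and $\mathbf W_*\t\tilde X_i$ in place of $\mathbf W_*\hat X_i$ and $\tilde X_i$ follows by the same argument with $\xtilde\mathbf W_*$ playing the role of $\xtilde$. Under Assumption~\ref{sparsity2}, rerunning the argument with the $n^{-1/2}$ scaling multiplies each bound by $\alpha_n^{-1/2}$ and yields the claimed $O(\sqrt{\log n}/(\sqrt n\,\alpha_n))$ rate, which is summable because $\sqrt n\,\alpha_n = \omega(\log^{1+\eta}n)$. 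The principal obstacle, I expect, is not the concentration or the covering step in isolation but the bookkeeping required to control both the Taylor remainder $Q_n$ and the $R_i$ term through the single aggregate Frobenius bound of Lemma~\ref{lem:frobenius}, since a naive row-by-row use of the $2\to\infty$ estimate $\|\xhat - \xtilde\mathbf W_*\|_{2,\infty} = O(\log n/\sqrt n)$ loses a factor of $\sqrt n$ and would destroy the target rate.
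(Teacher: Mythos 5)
Your proposal follows the same skeleton as the paper's proof: a second-order Taylor expansion about $\tilde X_i/\sqrt{\alpha_n}$, the quadratic term killed by the aggregate Frobenius bound of Lemma \ref{lem:frobenius} (giving $O((n\alpha_n)^{-1/2})$), the linear term split via the decomposition $\xhat\mathbf{W_*}\t - \xtilde = (\mathbf{A-P})\mathbf{U_X}|\mathbf{\Lambda_X}|^{-1/2}\ipq + \mathbf{R}$, and the $\mathbf{R}$-contribution controlled by Cauchy--Schwarz and $\|\mathbf{R}\|_F$, which is exactly where the dominant $\sqrt{\log n/(n\alpha_n)}$ rate comes from in the paper as well; you also correctly flag that a row-wise use of the $2\to\infty$ bound would be too lossy. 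The only genuine divergence is in the uniform control of the main linear term $\frac{1}{\sqrt n}\operatorname{tr}\bigl((\mathbf{A-P})\mathbf{U}|\mathbf{\Lambda}|^{-1/2}\ipq\,[\mathbf{M}(\partial f)]\t\bigr)$: the paper runs a multiscale chaining argument over dyadic covers $S_j$ of $\partial\mathcal{F}$ combined with Hoeffding's inequality (following the analogous lemma in Tang et al.), yielding $O(\sqrt{\log n/(n\alpha_n)})$ for that piece, whereas you use a single-scale $\eps$-net together with Bernstein's inequality that exploits the Bernoulli variance $O(\alpha_n)$, which gives the sharper $O(\sqrt{\log n/n})$ for that piece; since the $\mathbf{R}$ term dominates either way, both routes deliver the stated rate, and your variant is arguably a bit more elementary at the cost of having to verify explicitly the Lipschitz transfer $|L_n^{\mathrm{main}}(f)-L_n^{\mathrm{main}}(g)|\le \|\nabla f-\nabla g\|_\infty\,\|(\mathbf{A-P})\mathbf{U}|\mathbf{\Lambda}|^{-1/2}\|_F$ (bounded via Lemma \ref{lem:numtol}) when discretizing, and the same covering-number hypothesis on $\partial\mathcal{F}$ that the paper also uses implicitly. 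The replacement variant, the Borel--Cantelli step, and the Assumption \ref{sparsity2} rescaling are handled as in the paper.
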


\begin{table}[H]
\centering
\begin{tabular}{|c|l|} \hline &\\
 {\textbf{Notation}} & \textbf{Definition} \\ \hline &\\
$\mathbf{Q_X}, \mathbf{Q_Y} \in \mathbb{O}(p,q)$& The matrices such that $\mathbf{U_X | \Lambda_X|^{1/2}Q_X} = \alpha_n^{1/2}\mathbf{X}$  \\ & and $\mathbf{U_Y | \Lambda_Y|^{1/2}Q_Y =} \beta_m^{1/2}\mathbf{ Y}$  \\
& (Equation \ref{qx})\\
     \hline  &\\
     $\mathbf{\tilde Q_X}, \mathbf{\tilde Q_Y}\in \mathbb{O}(p,q)$ & The limiting matrices for $\mathbf{Q_X}$ and $\mathbf{Q_Y}$ from Lemma \ref{lem:qtilde}\\
     & (Equation \ref{qx}) \\
     \hline &\\
     $\wx$,$\wy\in \mathbb{O}(d) \cap \mathbb{O}(p,q)$ & The block-orthogonal matrices aligning $\mathbf{\hat X}$ and $\mathbf{\tilde X}$ \\ 
     & (Lemma \ref{lem:frobenius} and Theorem \ref{thm:grdpg}) \\
     \hline  &\\
     $\mathbf{W_X}$, $\mathbf{W_Y}\in \mathbb{O}(d) \cap \mathbb{O}(p,q)$ & The block-orthogonal matrices aligning $\mathbf{Q_X}$ and $\mathbf{\tilde Q_X}$ \\ &  from Lemma \ref{lem:qtilde}. \\
     \hline
\end{tabular}
\caption{Table of Notation}
\label{table:notation}
\end{table}

\begin{figure}[H]
    \centering
    \begin{tikzcd}
 F_X \arrow[to=3-2,"i.i.d.",sloped]\arrow[to=1-5,"\text{Lemma \ref{lem:qtilde}}"]    &                                          & & &  \mathbf{\tilde Q_X} \arrow[to=2-7,"\text{Lemma \ref{lem:qtilde}}",sloped] & \\
  &&&&&& \mathbf{W_X} \\
 & \mathbf{X} \arrow[to=4-2,"\alpha_n"] \arrow[to=3-5,"\text{Eq. \eqref{qx}}"] & & &\mathbf{Q_X} \arrow[to=2-7,"\text{Lemma \ref{lem:qtilde}}",sloped] & &\\
   &\alpha_n \mathbf{X}\ipq \mathbf{X}\t = \mathbf{P} \arrow[to=4-4,"\text{ASE}"]  \arrow[to = 6-2,"\text{Bernoulli}",sloped] & &\mathbf{\tilde X} \arrow[to=3-5,"\text{Eq. \eqref{qx}}",sloped]\arrow[to=5-6,"\text{Lemma \ref{lem:frobenius}}",sloped]  & \\
   &&&&& \mathbf{W_*^X} \\
                                         &  \mathbf{A} \arrow[to=6-4,"\text{ASE}"] & & \xhat\arrow[to=5-6,"\text{Lemma \ref{lem:frobenius}}",sloped]  & &
\end{tikzcd}
    \caption{Diagram of the alignment matrices and where they come from.  Both $\mathbf{\tilde Q_X}$  and $\mathbf{W_X}$ come from Lemma \ref{lem:qtilde}, whereas the matrix $\mathbf{W_*^X}$ comes from Lemma \ref{lem:frobenius} (or Theorem \ref{thm:grdpg}). }
    \label{tab:diagram}
\end{figure}

Armed with these technical results, we are now ready to prove Theorem \ref{thm:consistency_repeated}.  To make the proof more straightforward, we have compiled the notation for all the alignment matrices in Table \ref{table:notation}.  Although the proof mirrors that in \citet{tang_nonparametric_2017}, the steps require careful tabulation of sparsity parameters, orthogonal transformations, and indefinite orthogonal transformations, all of which require novel technical analyses.  Figure \ref{tab:diagram} also shows how to find the various alignment matrices.  Essentially, we have the approximations
\begin{align*}
    \xhat \wx \mathbf{W_X} \approx \mathbf{\tilde X} \mathbf{W_X}  = \alpha_n^{1/2} \mathbf{X Q_X\inv W_X } \approx \alpha_n^{1/2} \mathbf{ X  \tilde  Q_X\inv },
\end{align*}
using Lemmas \ref{lem:qtilde} and \ref{lem:frobenius}.  Similar approximations hold for $\yhat$ and $\mathbf{\tilde Y}$.  

\begin{proof}[Proof of Theorem  \ref{thm:consistency_repeated}]



Define the matrices $\mathbf{W_*^X}$ and $\mathbf{W_X}$ where $\mathbf{W_*^X}$ is the orthogonal matrix from Lemma \ref{lem:frobenius}, and $\mathbf{W_X}$ is the matrix from Lemma \ref{lem:qtilde}.  Define $\mathbf{W_*^Y}$ and $\mathbf{W_Y}$ similarly.   We will define
\begin{align*}
    \mathbf{\hat W}_n := \mathbf{W_*^X} \mathbf{W_X} (\mathbf{W_*^Y})\t \mathbf{W_Y}\t; \qquad \mathbf{W}_n :=  \mathbf{W_X}  \mathbf{W_Y}\t.
\end{align*}
We will implicitly use the fact that $\kappa$ is radial to reorganize terms.  In particular, it holds that
\begin{align*}
 U_{n,m}(\xhat \mathbf{\hat W}_n/\sqrt{\alpha_n}, \yhat/\sqrt{\beta_m}) = U_{n,m}( \xhat \mathbf{W_*^X} \mathbf{W_X} /\sqrt{\alpha_n}, \yhat \mathbf{W_*^Y}\mathbf{W_Y}/\sqrt{\beta_m} ).
\end{align*}
We will use this fact without further reference.


\textcolor{black}{
Let $\Phi( \cdot)$ denote the feature map of $\kappa(\cdot,\cdot)$ associated to its corresponding reproducing kernel Hilbert space.}  Define $\tilde V_{n,m} := V_{n,m}\left( \frac{\mathbf{\tilde XW_X}}{\sqrt{\alpha_n}}, \frac{\mathbf{\tilde YW_Y}}{\sqrt{\beta_m}} \right)$ via
\begin{align*}
    \tilde V_{n,m}\left( \frac{\mathbf{\tilde XW_X}}{\sqrt{\alpha_n}}, \frac{\mathbf{\tilde YW_Y}}{\sqrt{\beta_m}} \right) &= \left \| \frac{1}{n} \sum_{i=1}^{n} \Phi\left( \frac{\mathbf{W_X\t}\tilde X_i}{\sqrt{\alpha_n}}\right) - \frac{1}{m} \sum_{k=1}^{m} \Phi\left( \frac{ \mathbf{W_Y\t}\tilde Y_k}{\sqrt{\beta_m}}\right) \right\|_{\mathcal{H}}^2 \\
    &= \frac{1}{n^2} \sum_{i=1}^{n} \sum_{j=1}^{m} \kappa\left( \frac{\mathbf{W_X\t}\tilde X_i}{\sqrt{\alpha_n}},\frac{\mathbf{W_X\t}\tilde X_j}{\sqrt{\alpha_n}}\right) \\
    &\quad- \frac{2}{mn} \sum_{i=1}^{n} \sum_{k=1}^{m} \kappa\left( \frac{\mathbf{W_X\t}\tilde X_i}{\sqrt{\alpha_n}}, \frac{ \mathbf{W_Y\t}\tilde Y_k}{\sqrt{\beta_m}}\right) \\&\quad +\frac{1}{m^2} \sum_{k=1}^{m} \sum_{l=1}^{m} \kappa\left(  \frac{ \mathbf{W_Y\t}\tilde Y_k}{\sqrt{\beta_m}}, \frac{ \mathbf{W_Y\t}\tilde Y_l}{\sqrt{\beta_m}}\right)
\end{align*}
and analogously for $\hat V_{n,m}$. We have the decomposition
\begin{align*}
    (m\beta_m + n\alpha_n)&\left(V_{n,m}\left( \frac{\mathbf{\tilde X}\mathbf{W_X}}{\sqrt{\alpha_n}}, \frac{\mathbf{\tilde Y}\mathbf{W_Y}}{\sqrt{\beta_m}} \right) - V_{n,m}\left( \frac{\mathbf{\hat X}\wx\mathbf{W_X}}{\sqrt{\alpha_n}}, \frac{\mathbf{\hat Y}\wy\mathbf{W_Y}}{\sqrt{\beta_m}} \right) \right) \\
    &= (m\beta_m + n\alpha_n)\left(U_{n,m}\left( \frac{\mathbf{\tilde X}\mathbf{W_X}}{\sqrt{\alpha_n}}, \frac{\mathbf{\tilde Y}\mathbf{W_Y}}{\sqrt{\beta_m}} \right) - U_{n,m}\left( \frac{\mathbf{\hat X}\wx\mathbf{W_X}}{\sqrt{\alpha_n}}, \frac{\mathbf{\hat Y}\wy\mathbf{W_Y}}{\sqrt{\beta_m}} \right) \right) \\&\quad + r_1 + r_2,
\end{align*}
where
\begin{align*}
    r_1 &= \frac{ (m\beta_m + n\alpha_n)}{n(n-1)} \sum_{i=1}^{n} \bigg[\kappa\left( \frac{ \mathbf{W_X\t}\tilde X_i}{\sqrt{\alpha_n}},\frac{\mathbf{W_X\t}\tilde X_i}{\sqrt{\alpha_n}}\right) -  \kappa\left(  \frac{(\wx\mathbf{W_X})\t\hat X_i}{\sqrt{\alpha_n}},\frac{(\wx\mathbf{W_X})\t\hat X_i}{\sqrt{\alpha_n}}\right) \bigg] \\
    &\quad + \frac{ (m\beta_m + n\alpha_n)}{n(n-1)} \sum_{k=1}^{m} \bigg[\kappa\left( \frac{ \mathbf{W_Y\t}\tilde Y_k}{\sqrt{\beta_m}}, \frac{\mathbf{W_Y\t}\tilde Y_k}{\sqrt{\beta_m}}\right) 
   -  \kappa\left( \frac{ ( \wy\mathbf{W_Y})\t\hat Y_k}{\sqrt{\beta_m}},\frac{( \wy\mathbf{W_Y})\t\hat Y_k}{\sqrt{\beta_m}}\right) \bigg];
    \end{align*}
    \begin{align*}
    r_2 &= \frac{ (m\beta_m + n\alpha_n)}{n^2(n-1)} \sum_{i=1}^{n}\sum_{j=1}^{n} \bigg[\kappa\left( \frac{ \mathbf{W_X\t}\tilde X_i}{\sqrt{\alpha_n}},\frac{\mathbf{W_X\t}\tilde X_j}{\sqrt{\alpha_n}}\right) -  \kappa\left(  \frac{(\wx\mathbf{W_X})\t\hat X_i}{\sqrt{\alpha_n}},\frac{(\wx\mathbf{W_X})\t\hat X_j}{\sqrt{\alpha_n}}\right) \bigg] \\
    &\quad +  \frac{ (m\beta_m + n\alpha_n)}{m^2(m-1)} \sum_{k=1}^{m}\sum_{l=1}^{m}\bigg[\kappa\left( \frac{ \mathbf{W_Y\t}\tilde Y_k}{\sqrt{\beta_m}}, \frac{\mathbf{W_Y\t}\tilde Y_l}{\sqrt{\beta_m}}\right) 
   -  \kappa\left( \frac{ ( \wy\mathbf{W_Y})\t\hat Y_k}{\sqrt{\beta_m}},\frac{( \wy\mathbf{W_Y})\t\hat Y_l}{\sqrt{\beta_m}}\right) \bigg]. 
\end{align*}
By assumption $\mathbf{\bar \Omega}$ is compact, so by the fact that $\kappa$ is twice continuously differentiable, $\kappa$ is Lipschitz on  $\mathbf{Q_X\inv \bar \Omega}$ since $\mathbf{Q_X\inv \bar\Omega}$ is compact. In particular, for some positive $K$, we have that
\begin{align*}
\|r_1\| &\leq K\frac{m\beta_m + n\alpha_n}{n-1} \max_{i}\left\| \frac{\mathbf{W_X\t}\tilde X_i}{\sqrt{\alpha_n}} - \frac{(\wx \mathbf{W_X})\t \hat X_i}{\sqrt{\alpha_n}} \right\| \\
&\quad 
+ K\frac{m\beta_m + n\alpha_n}{m-1} \max_{i} \left\|\frac{ \mathbf{W_Y\t}\tilde Y_i}{\sqrt{\alpha_n}} - \frac{(\wy \mathbf{W_Y})\t \hat Y_i}{\sqrt{\alpha_n}} \right\| \\
&\leq 2K \frac{m\beta_m + n\alpha_n}{n\alpha_n } \sqrt{\alpha_n} \frac{\log(n)}{\sqrt{n\alpha_n}} + 2K\frac{m\beta_m + n\alpha_n}{m\beta_m } \sqrt{\beta_m} \frac{\log(m)}{\sqrt{m\beta_m}} \\
&\leq C \bigg(\frac{\log(n)}{\sqrt{n}} + \frac{\log(m)}{\sqrt{m}}\bigg),
\end{align*}
by the assumption that $\frac{m\beta_m}{m\beta_m + n\alpha_n} \to \rho \in (0,1)$ and the $\ell_{2,\infty}$ bound in Theorem \ref{thm:grdpg}.  By a similar argument,
\begin{align*}
    \| r_2 \| &\leq K \frac{m\beta_m + n\alpha_n}{(n-1)}   \max_{i}\left\| \frac{\mathbf{W_X\t}\tilde X_i}{\sqrt{\alpha_n}} - \frac{(\wx \mathbf{W_X})\t \hat X_i}{\sqrt{\alpha_n}} \right\| \\
    &\quad 
+ K\frac{m\beta_m + n\alpha_n}{m-1} \max_{i}\left\|\frac{ \mathbf{W_Y\t}\tilde Y_i}{\sqrt{\alpha_n}} - \frac{(\wy \mathbf{W_Y})\t \hat Y_i}{\sqrt{\alpha_n}} \right\| \\
&\leq C \bigg(\frac{\log(n)}{\sqrt{n}} + \frac{\log(m)}{\sqrt{m}}\bigg).
\end{align*}
Both of these bounds are  independent of $\alpha_n$ and $\beta_m$. 
Define
\begin{align*}
    \xi :&= \frac{ \sqrt{m \beta_m + n\alpha_n}}{n}  \sum_{i=1}^{n} \kappa(\mathbf{W_X\t}\tilde X_i/\sqrt{\alpha_n}, \cdot) - \frac{ \sqrt{m \beta_m + n\alpha_n}}{m} \sum_{k=1}^{m} \kappa(\mathbf{W_Y\t}\tilde Y_k/\sqrt{\beta_m}, \cdot) \\
    \hat \xi :&=\frac{ \sqrt{m \beta_m + n\alpha_n}}{n}  \sum_{i=1}^{n} \kappa((\wx\mathbf{W_X})\t\hat X_i/\sqrt{\alpha_n}, \cdot) - \frac{ \sqrt{m \beta_m + n\alpha_n}}{m} \sum_{k=1}^{m} \kappa((\wy\mathbf{W_Y})\t\hat Y_k/\sqrt{\beta_m}, \cdot).
\end{align*}
\textcolor{black}{By the triangle inequality for $\|\cdot\|_{\mathcal{H}}$, we now have that }
\begin{align*}
    \bigg|(m\beta_m + n\alpha_n)&\left(V_{n,m}\left( \frac{\mathbf{\tilde X}\wx\mathbf{W_X}}{\sqrt{\alpha_n}}, \frac{\mathbf{\tilde Y}\wy\mathbf{W_Y}}{\sqrt{\beta_m}} \right) - V_{n,m}\left( \frac{\mathbf{\hat X}\mathbf{W_X}}{\sqrt{\alpha_n}}, \frac{\mathbf{\hat Y}\mathbf{W_Y}}{\sqrt{\beta_m}} \right) \right)\bigg| \\
    &= \left| \| \xi \|_\mathcal{H}^2 - \| \hat \xi \|_{\mathcal{H}}^2 \right| \\
    &\leq \| \xi - \hat \xi\|_{\mathcal{H}} \left( 2 \| \xi \|_{\mathcal{H}} + \| \xi - \hat \xi \|_{\mathcal{H}} \right).
\end{align*}
We have that

\begin{align*}
    \xi - \hat \xi &= \sqrt{\frac{m\beta_m + n\alpha_n}{n}} \sum_{i=1}^{n} \frac{\kappa(\mathbf{W_X\t}(\wx)\t\hat X_i/\sqrt{\alpha_n}, \cdot) - \kappa(\mathbf{W_X\t}\tilde X_i/\sqrt{\alpha_n}, \cdot) }{\sqrt{n}} \\&\quad- \sqrt{\frac{m\beta_m + n\alpha_n}{n}} \sum_{i=1}^{n} \frac{\kappa(\mathbf{W_Y\t}(\wy)\t\hat Y_i/\sqrt{\beta_m}, \cdot) - \kappa((\mathbf{W_Y\t}\tilde Y_i/\sqrt{\beta_m}, \cdot) }{\sqrt{n}} \\
    :&= \zeta_X - \zeta_Y.
\end{align*}
We note that since $\kappa$ is radial, we can disregard $\mathbf{W_X\t}$ and $\mathbf{W_Y\t}$.  Moreover,
\begin{align*}
    \|\zeta_X\|
    &= \sqrt{\frac{m\beta_m + n\alpha_n}{n\alpha_n}} \left\| \sqrt{\alpha_n}\sum_{i=1}^{n}  \frac{\kappa((\wx)\t\hat X_i/\sqrt{\alpha_n}, \cdot) - \kappa(\tilde X_i/\sqrt{\alpha_n}, \cdot) }{\sqrt{n}} \right\|.
\end{align*}
Since $m\beta_m/(m\beta_m+n\alpha_n) \to \rho \in (0,1)$ the term outside of the norm is of order $O(1)$. \textcolor{black}{Letting $\mathcal{F}$ denote the set of feature maps associated to $\mathcal{H}$, we see that $\mathcal{F}$ satisfies the conditions of Lemma \ref{lem:fclt} since the set of feature maps has uniformly bounded first and second derivatives by Corollary 4.36 of \citet{steinwart_support_2008}.} Lemma  \ref{lem:fclt} then implies that the term inside of the norm tends to zero, and the same argument holds for $\zeta_Y$.  In particular, we see that by Lemma \ref{lem:fclt},
\begin{align}
    \| \xi - \hat \xi\|_{\mathcal{H}} &=  O\bigg( \sqrt{\frac{\log(n)}{n\alpha_n}} + \sqrt{\frac{\log(m)}{m\beta_m}}\bigg)\label{fclt:app}
\end{align}
with probability at least $1 - O(n^{-2} + m^{-2})$.

We now bound $\|\xi\|_{\mathcal{H}}$ under the null and alternative respectively.  
Recall that $\mathbf{\tilde Q_X}$ is the limiting matrix guaranteed by Lemma \ref{lem:qtilde}.  Note further that $\mathbf{\tilde X}/\sqrt{\alpha_n} = \mathbf{XQ_X\inv}$ so that $\mathbf{\tilde XW_X}/\sqrt{\alpha_n} = \mathbf{XQ_X\inv W_X}$, where $\mathbf{W_X}$ was the matrix such that $\mathbf{Q_X  - W_X \tilde Q_X}$ is of order $\sqrt{\log(n)/n}$ with probability at least $1 - n^{-2}$. 
Hence, from the equation
\begin{align*}
    \mathbf{Q_X\inv} = \ipq \mathbf{Q_X\t} \ipq,
\end{align*}
we see that with probability at least $1- n^{-2}$ that
\begin{align*}
    \| \mathbf{Q_X\inv W_X - \tilde Q_X\inv} \| 
    &= \| \mathbf{Q_X} - \mathbf{W_X}    \mathbf{\tilde Q_X} \| \\
&= O\bigg( \frac{\sqrt{\log(n)}}{\sqrt{n}} \bigg).
\end{align*}
Hence, we have that
\begin{align*}
    \xi &= \frac{ \sqrt{m \beta_m + n\alpha_n}}{n}  \sum_{i=1}^{n} \kappa(\mathbf{W_X\t}\tilde X_i/\sqrt{\alpha_n}, \cdot) - \frac{ \sqrt{m \beta_m + n\alpha_n}}{m} \sum_{k=1}^{m} \kappa(\mathbf{W_Y\t}\tilde Y_k/\sqrt{\beta_m}, \cdot) \\
    &=  \frac{ \sqrt{m \beta_m + n\alpha_n}}{\sqrt n}  \sum_{i=1}^{n} \frac{\kappa( (\mathbf{Q_X\inv W_X)}\t X_i,\cdot) - \kappa(\mathbf{\tilde Q_X^{-\top}} X_i,\cdot)}{\sqrt{n}} \\&\quad - \frac{ \sqrt{m \beta_m + n\alpha_n}}{\sqrt{m}} \sum_{k=1}^{m} \frac{\kappa((\mathbf{Q_Y\inv W_Y)}\t Y_i,\cdot)- \kappa(\mathbf{\tilde Q_Y^{-\top}} X_Y,\cdot)}{\sqrt{m}} \\
    &\quad + \frac{ \sqrt{m \beta_m + n\alpha_n}}{\sqrt n}  \sum_{i=1}^{n} \frac{  \kappa(\mathbf{\tilde Q_X^{-\top}} X_i,\cdot) - \mu[ F_X \circ \mathbf{\tilde Q_X^{-\top}}] }{\sqrt{n}} \\
    &\quad - \frac{ \sqrt{m \beta_m + n\alpha_n}}{\sqrt{m}} \sum_{k=1}^{m} \frac{ \kappa(\mathbf{\tilde Q_Y^{-\top}} Y_k,\cdot)  - \mu[ F_Y \circ \mathbf{\tilde Q_Y^{-\top}}] }{\sqrt{m}} \\
    &\quad  + \sqrt{m \beta_m + n\alpha_n}  \mu[ F_X \circ \mathbf{\tilde Q_X^{-\top}}]  -  \sqrt{m \beta_m + n\alpha_n}  \mu[ F_Y \circ \mathbf{\tilde Q_Y^{-\top}}]
\end{align*}
For the first two terms, by the Lipschitz property of $\kappa$ and the fact that $\mathbf{\bar \Omega}$ is compact, we have that
\begin{align*}
   \frac{ \sqrt{m \beta_m + n\alpha_n}}{\sqrt n}  \sum_{i=1}^{n} &\frac{\kappa( (\mathbf{Q_X\inv W_X)}\t X_i,\cdot) - \kappa(\mathbf{\tilde Q_X^{-\top}} X_i,\cdot)}{\sqrt{n}} \\&\leq   \frac{ \sqrt{m \beta_m + n\alpha_n}}{\sqrt n} \sum_{i=1}^{n} \frac{\| (\mathbf{Q_X\inv W_X)}\t X_i - \mathbf{\tilde Q_X^{-\top}} X_i\|}{\sqrt{n}}  \\
   &\leq  \sqrt{m \beta_m + n\alpha_n} \max_{i}\| X_i (\mathbf{Q_X\inv W_X) - \tilde Q_X\inv}) \| \\
   &\leq  \| \mathbf{X}\|_{2,\infty} \sqrt{m \beta_m + n\alpha_n} \|\mathbf{Q_X\inv W_X - \tilde Q_X\inv} \| \\
   &\leq C \sqrt{m \beta_m + n\alpha_n} \frac{\sqrt{\log(n)}}{\sqrt{n}} \\
   &= O\bigg( \sqrt{\alpha_n \log(n)}\bigg)
\end{align*}
with probability at least $1 - O(n^{-2})$ by Lemma  \ref{lem:qtilde}.  Hence the first term is of order
\begin{align*}
  O\bigg(  \sqrt{\alpha_n \log(n)} + \sqrt{\beta_m \log(m)} \bigg)
\end{align*}
with probability at least $1 - O(n^{-2} + m^{-2})$.

Now, define
\begin{align*}
    \psi_X :&=\sum_{i=1}^{n} \frac{\kappa(\mathbf{\tilde Q_X\inv} X_i,\cdot)  - \mu( F_X \circ \mathbf{\tilde Q_X\inv})}{n} \\
    \psi_Y :&= \sum_{i=1}^{n} \frac{\kappa(\mathbf{\tilde Q_Y\inv} Y_i,\cdot) - \mu( F_Y \circ  \mathbf{\tilde Q_Y\inv})}{m}
\end{align*}
By Remark 1 in \citet{schneider_probability_2016}, we see that 
\begin{align*}
    \p  ( \| \psi_X \|^2 > \eps^2 ) &\leq 2\exp\bigg[ \frac{-n\eps^2}{64}\bigg]
\end{align*}
which in particular shows that $\|\psi_X\| \leq \frac{C \sqrt{\log(n)}}{\sqrt{n}}$ with probability at least $1 - O(n^{-2})$, and similarly for $\psi_Y$.  Thus far, we have shown with probability at least $1 - O(n^{-2} + m^{-2})$ that
\begin{align*}
    \| \xi \|_{\mathcal{H}} &= O\bigg( \sqrt{\alpha_n \log(n)} + \sqrt{\beta_m \log(m)} \bigg) + \sqrt{m \beta_m + n\alpha_n}\bigg( \| \mu[ F_X \circ \mathbf{\tilde Q_X^{-\top}}] - \mu[F_Y \circ \mathbf{\tilde Q_Y^{-\top}}] \|_{\mathcal{H}}\bigg).
\end{align*}
Under the null hypothesis, the term in the parentheses is zero as $\mathbf{\tilde Q_Y}$ can be chosen to be $\mathbf{T\inv \tilde Q_X\inv}$ by Lemma \ref{lem:qtilde} and the fact that $\mathbf{\tilde Q_Y}$ and $\mathbf{\tilde Q_X}$ do not depend on the sparsity factors by Lemma \ref{lem:qtilde}.  Hence, we see that
\begin{align*}
 \|\xi - \hat \xi \| \bigg(2 \| \xi \| + \| \xi - \hat \xi\|\bigg) &= O\bigg(\sqrt{\frac{\log(n)}{n\alpha_n}} + \sqrt{\frac{\log(m)}{m\beta_m}}\bigg) \bigg( \sqrt{\alpha_n \log(n) + \beta_m \log(m)} \bigg).
\end{align*}
Under the alternative the term $\| \mu[ F_X \circ \mathbf{\tilde Q_X^{-\top}}] - \mu[F_Y \circ \mathbf{\tilde Q_Y^{-\top}}] \|_{\mathcal{H}}$ is not necessarily zero, so we have that
\begin{align*}
     \|\xi - \hat \xi \| \bigg(2 &\| \xi \| + \| \xi - \hat \xi\|\bigg) \\&= O\bigg(\sqrt{\frac{\log(n)}{n\alpha_n}} +\sqrt{\frac{\log(m)}{m\beta_m}}\bigg) \bigg( \sqrt{\alpha_n \log(n) + \beta_m \log(m)} + \sqrt{n\alpha_n + m\beta_m} \bigg) \\
     &= O\bigg( \sqrt{ \log(n)} + \sqrt{\log(m)}\bigg).
\end{align*}
Hence, dividing by $\log(n)$ yields the result.
\end{proof}

\noindent We now prove \cref{thm:centering}.

\begin{proof}[Proof of \cref{thm:centering}]
Recall that we define $\mathbf{W}_n := \mathbf{W_X W_Y}\t$, where $\mathbf{W_X}$ and $\mathbf{W_Y}$ are as in the proof of \cref{thm:consistency_repeated}.  By the Lipschitz property of $\kappa$, it holds that
\begin{align*}
    | U_{n,m} (\xtilde \mathbf{W}_n/ \alpha_n^{1/2},& \ytilde/\beta_m^{1/2} ) - U_{n,m}( \mathbf{X} \mathbf{\tilde Q_X}\inv, \mathbf{Y} \mathbf{\tilde Q_Y}\inv) | \\&\leq C \big( \| \mathbf{Q_X\inv W_X} - \mathbf{\tilde Q_X}\inv \| + \| \mathbf{Q_Y\inv W_Y} - \mathbf{\tilde Q_Y}\inv \| \big) \\
    &= O\bigg( \sqrt{\frac{\log(n)}{n}} + \sqrt{\frac{\log(m)}{m}} \bigg),
\end{align*}
where the final bound holds with probability at least $1 - n^{-2} - m^{-2}$ by \cref{lem:qtilde} and similar manipulations as in the proof of \cref{thm:consistency_repeated} using the identity $\mathbf{Q_X}\inv = \ipq \mathbf{Q_X}\t \ipq$. 

Furthermore, by the concentration inequality for $U$-statistics \citep[Section~5.6]{serfling_approximation_1980}, it holds that
\begin{align*}
    \big| U_{n,m} (\mathbf{X} \mathbf{\tilde Q_X}\inv, \mathbf{Y} \mathbf{\tilde Q_Y}\inv ) - \| \mu[ F_X \circ \mathbf{\tilde Q_X}\inv - \mu[ F_Y \circ \mathbf{\tilde Q_Y}\inv]] \|_{\mathcal{H}}^2 \big| = O\bigg( \sqrt{\frac{\log(n+m)}{n}} + \sqrt{\frac{\log(n+m)}{m}}\bigg)
\end{align*}
with probability at least $1- O(n^{-2} + m^{-2})$.  

Finally, the fact that $\| \mu[ F_X \circ \mathbf{\tilde Q_X}\inv - \mu[ F_Y \circ \mathbf{\tilde Q_Y}\inv]] \|_{\mathcal{H}}^2 = 0$ under the null hypothesis holds by \cref{lem2} since $\mathbf{\tilde Q_Y}$ and $\mathbf{\tilde Q_X}$ can be chosen to be the same matrix.  Under the alternative, the result is immediate, since if not then it holds that $F_X\simeq F_Y$ by the fact that $\kappa$ is universal.
\end{proof}

\subsection{Proofs of Corollaries} \label{sec:proof_cors}

The proof of Corollary \ref{cor:scaling} is now straightforward given the previous two proofs.

\begin{proof}[Proof of Corollary \ref{cor:scaling}]
We will highlight where the previous proof changes.  Examining the proof of Theorem \ref{thm:consistency_repeated}, we see that we have (under the new scaling $(m+n)$) the residual bounds
\begin{align*}
    r_1 = O\bigg( \frac{\log(n)}{\sqrt{n\alpha_n}} +  \frac{\log(m)}{\sqrt{m\beta_m}}\bigg).
\end{align*}
and similarly for $r_2$.  Furthermore, by the final statement in Lemma \ref{lem:fclt}, we have that 
\begin{align*}
    \| \xi - \hat \xi \|_{\mathcal{H}} &\leq  O\bigg( \frac{\sqrt{\log(n)}}{\sqrt{n} \alpha_n } +\frac{\sqrt{\log(m)}}{\sqrt{m} \beta_m }\bigg).
\end{align*} 
Under the null hypothesis, through a similar analysis, we have that
\begin{align*}
    \|\xi \|_{\mathcal{H}} &= O\bigg( \sqrt{\log(n) + \log(m)}\bigg).
\end{align*}
Therefore, with probability $1 - O(n^{-2} + m^{-2})$, we have the bound
\begin{align*}
    \| \xi - \hat \xi \| \bigg( 2 \| \xi \| + \| \xi - \hat \xi \| \bigg) &= O\bigg( \frac{\sqrt{\log(n)}}{\sqrt{n} \alpha_n } +\frac{\sqrt{\log(m)}}{\sqrt{m} \beta_m }\bigg) \bigg( \sqrt{\log(n)}  + \sqrt{\log(m)}\bigg) \\
    &= O\bigg( \frac{\log(n)}{\sqrt{n} \alpha_n} + \frac{\log(m)}{\sqrt{m}\beta_m} \bigg)
\end{align*}
since $m/(n+m) \to \rho \in (0,1)$.  Therefore, since $\min(\alpha_n,\beta_m) \gg n^{-1/2} \log(n)$, the right hand side tends to zero.  
\end{proof}

\noindent In order to prove Corollary \ref{cor:scaling_estimate}, we will need the following additional lemmas.  The first is straightforward and included in Section \ref{sec:proofs_lems} for completeness.

\begin{lemma} \label{sparsity_lemma}
When $\E(X_1\t \ipq X_2) = \frac{1}{2}$, we have with probability at least $1 - O(n^{-2})$ that
\begin{align*}
    \frac{1}{ \sqrt{\hat \alpha_n}} - \frac{1}{\sqrt{\alpha_n}} =      O\bigg(\frac{\sqrt{\log(n)}}{n^{1/2}\alpha_n} \bigg).
\end{align*}
\end{lemma}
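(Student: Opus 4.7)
My plan is to first show concentration of $\hat{\alpha}_n$ around $\alpha_n$, and then convert this to concentration for the reciprocal square roots via an elementary algebraic identity.

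For the concentration step, I would condition on $\mathbf{X}$ and exploit that, given $\mathbf{X}$, the entries $\{\mathbf{A}^{(1)}_{ij}\}_{i<j}$ are independent Bernoulli random variables with means $p_{ij} = \alpha_n X_i^\top \mathbf{I}_{p,q} X_j$. Bernstein's inequality, together with the fact that $\sum_{i<j} p_{ij}(1-p_{ij}) \leq \binom{n}{2}\alpha_n$, yields
\begin{align*}
   \bigl|\hat{\alpha}_n - \mathbb{E}(\hat{\alpha}_n \mid \mathbf{X})\bigr| = O\!\left(\sqrt{\tfrac{\alpha_n \log n}{n^2}}\right)
\end{align*}
with probability at least $1 - O(n^{-2})$, conditionally on $\mathbf{X}$ and hence unconditionally. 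Next, $\mathbb{E}(\hat{\alpha}_n \mid \mathbf{X}) = \alpha_n \binom{n}{2}^{-1}\sum_{i<j} X_i^\top \mathbf{I}_{p,q} X_j$ is $\alpha_n$ times a bounded two-sample $U$-statistic (the boundedness uses Theorem \ref{theorem:bounded}), with population mean $\alpha_n \cdot \mathbb{E}(X_1^\top \mathbf{I}_{p,q} X_2) = \alpha_n$ by the identifiability hypothesis. Hoeffding's inequality for $U$-statistics then gives
\begin{align*}
    \bigl|\mathbb{E}(\hat{\alpha}_n \mid \mathbf{X}) - \alpha_n\bigr| = O\!\left(\alpha_n \sqrt{\tfrac{\log n}{n}}\right)
\end{align*}
with probability at least $1 - O(n^{-2})$. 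A triangle inequality then yields $|\hat{\alpha}_n - \alpha_n| = O(\alpha_n \sqrt{\log n / n})$ with probability at least $1 - O(n^{-2})$, where the $U$-statistic term dominates since $n\alpha_n = \omega(\log^4 n)$.

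To convert this to a bound on the difference of reciprocal square roots, I use the identity
\begin{align*}
    \frac{1}{\sqrt{\hat{\alpha}_n}} - \frac{1}{\sqrt{\alpha_n}} = \frac{\alpha_n - \hat{\alpha}_n}{\bigl(\sqrt{\hat{\alpha}_n} + \sqrt{\alpha_n}\bigr)\sqrt{\hat{\alpha}_n \alpha_n}}.
\end{align*}
On the high-probability event above, $\hat{\alpha}_n = \alpha_n(1 + o(1))$, so the denominator is of order $\alpha_n^{3/2}$. Plugging in the numerator bound $|\hat{\alpha}_n - \alpha_n| = O(\alpha_n \sqrt{\log n / n})$ produces
\begin{align*}
    \left|\frac{1}{\sqrt{\hat{\alpha}_n}} - \frac{1}{\sqrt{\alpha_n}}\right| = O\!\left(\frac{\alpha_n \sqrt{\log n / n}}{\alpha_n^{3/2}}\right) = O\!\left(\frac{\sqrt{\log n}}{n^{1/2}\alpha_n}\right),
\end{align*}
as required.

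There is no genuine obstacle here; the argument is essentially a routine concentration and Taylor-style bound. The only minor subtlety is handling the two sources of randomness separately (the Bernoulli sampling given $\mathbf{X}$ versus the randomness in the latent positions $\mathbf{X}$ themselves), which is why conditioning on $\mathbf{X}$ is cleaner than attempting a single direct concentration bound on $\hat{\alpha}_n$.
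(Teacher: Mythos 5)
Your proof is correct and takes essentially the same route as the paper's: decompose $\hat\alpha_n-\alpha_n$ into the Bernoulli sampling fluctuation conditional on $\mathbf{X}$ plus the deviation of the order-two $U$-statistic $\binom{n}{2}^{-1}\sum_{i<j}X_i^\top \mathbf{I}_{p,q}X_j$ from its mean $1$, control each by an exponential inequality, and then pass to the reciprocal square root. The only cosmetic differences are that you use Bernstein rather than Hoeffding for the conditional term and an exact algebraic identity rather than the paper's Taylor expansion; in fact your deviation bound $O(\alpha_n\sqrt{\log n/n})$ gives the slightly sharper rate $\sqrt{\log n/(n\alpha_n)}$, which implies the stated one since $\alpha_n\le 1$.
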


\noindent 
The next lemma shows we can replace $\alpha_n$ with $\hat \alpha_n$ in the appropriate place and still maintain convergence in probability.  The proof is in Section \ref{sec:proof_frob} and is simply a modification of the proof of Lemma \ref{lem:fclt}.

\begin{lemma} \label{cor:fclt_estimate}
 Under the setting of Lemma  \ref{lem:fclt}, the limiting result holds with $\hat X_i/\alpha_n^{1/2}$ replaced with $\hat X_i/ \hat \alpha_n^{1/2}$ with the almost sure convergence replaced with convergence in probability.  
 \end{lemma}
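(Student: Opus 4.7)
The plan is to reduce to Lemma \ref{lem:fclt} by treating the substitution $\alpha_n \mapsto \hat\alpha_n$ inside the sum as an additional perturbation controlled via Lemma \ref{sparsity_lemma}. Specifically, I would add and subtract $f(\mathbf{W_*}\hat X_i/\sqrt{\alpha_n})$ to write
\[
\sqrt{\tfrac{\alpha_n}{n}}\sum_{i=1}^{n}\Bigl[f\Bigl(\tfrac{\mathbf{W_*}\hat X_i}{\sqrt{\hat\alpha_n}}\Bigr)-f\Bigl(\tfrac{\tilde X_i}{\sqrt{\alpha_n}}\Bigr)\Bigr] = T_1(f) + T_2(f),
\]
where $T_2(f)=\sqrt{\alpha_n/n}\sum_i[f(\mathbf{W_*}\hat X_i/\sqrt{\alpha_n})-f(\tilde X_i/\sqrt{\alpha_n})]$ is exactly the empirical process of Lemma \ref{lem:fclt}, already shown to tend to zero almost surely (uniformly in $f\in\mathcal{F}$), and $T_1(f)=\sqrt{\alpha_n/n}\sum_i[f(\mathbf{W_*}\hat X_i/\sqrt{\hat\alpha_n})-f(\mathbf{W_*}\hat X_i/\sqrt{\alpha_n})]$ is the new perturbation arising from plugging in $\hat\alpha_n$.

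To control $T_1(f)$, I would first-order Taylor expand $f$ about $\mathbf{W_*}\hat X_i/\sqrt{\alpha_n}$, using that $f \in C^2$ and that Theorem \ref{theorem:bounded} makes $\mathrm{supp}(F_X)$ compact, so that $\nabla f$ and its Hessian are uniformly bounded on a neighborhood containing all relevant arguments with high probability. The linear term factors as $(1/\sqrt{\hat\alpha_n}-1/\sqrt{\alpha_n})$ times a partial sum involving $\nabla f(\mathbf{W_*}\hat X_i/\sqrt{\alpha_n})^\top \mathbf{W_*}\hat X_i$. Combining the rate $|1/\sqrt{\hat\alpha_n}-1/\sqrt{\alpha_n}|=O(\sqrt{\log n}/(\sqrt{n}\,\alpha_n))$ from Lemma \ref{sparsity_lemma}, the $\|\cdot\|_{2,\infty}$ bound $\|\mathbf{W_*}\hat X_i\|=O(\sqrt{\alpha_n})$ from Theorem \ref{thm:grdpg}, and the concentration of the empirical average $n^{-1}\sum_i \nabla f(\mathbf{W_*}\hat X_i/\sqrt{\alpha_n})^\top \mathbf{W_*}\hat X_i/\sqrt{\alpha_n}$ around its deterministic population limit (via the law of large numbers applied to the rescaled rows $\tilde X_i/\sqrt{\alpha_n} = X_i\mathbf{Q_X^{-1}}$), one obtains $\sup_{f\in\mathcal{F}}|T_1(f)|\to 0$ in probability; the quadratic Taylor remainder is of strictly smaller order under Assumption \ref{sparsity1}.

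The main obstacle is the linear Taylor term: a naive Lipschitz bound on $T_1(f)$ gives a residual that fails to vanish as $n\to\infty$, so the argument must exploit finer structure. The resolution is to note that $\hat\alpha_n-\alpha_n$ is itself a centered empirical average over the entries of $\aone$, so its interaction with the empirical sum $\sum_i\nabla f(\cdot)^\top \mathbf{W_*}\hat X_i$ benefits from cancellation around the population limit $\sqrt{n\alpha_n}\,\mathbb{E}[\nabla f(X\mathbf{Q_X^{-1}})^\top X\mathbf{Q_X^{-1}}]$, which brings the rate of $T_1(f)$ back down to $o(1)$ in probability. Taking the supremum over $f\in\mathcal{F}$ by the same uniform-concentration machinery as in Lemma \ref{lem:fclt}, and noting that convergence in probability rather than almost sure arises because Lemma \ref{sparsity_lemma} and the concentration of the cross term only propagate high-probability (rather than pathwise) bounds through the decomposition, completes the proof.
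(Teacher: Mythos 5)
Your decomposition into $T_1+T_2$ and the Taylor expansion of $T_1$ are reasonable, and you correctly diagnose the crux: with the rate from Lemma \ref{sparsity_lemma}, a termwise Lipschitz bound on the perturbation gives $\sqrt{\alpha_n/n}\cdot n\cdot O(\sqrt{\alpha_n})\cdot O\bigl(\sqrt{\log n}/(\sqrt{n}\,\alpha_n)\bigr)=O(\sqrt{\log n})$, which does not vanish. The gap is in the step you offer to fix this. The claimed ``cancellation'' coming from $\hat\alpha_n-\alpha_n$ being a centered average is not a valid mechanism: the quantity $\bigl(\hat\alpha_n^{-1/2}-\alpha_n^{-1/2}\bigr)$ is a single scalar multiplying the entire sum $\sum_i\nabla f(\mathbf{W_*}\hat X_i/\sqrt{\alpha_n})^{\top}\mathbf{W_*}\hat X_i$, whose dominant contribution is its generically nonzero mean, of order $n\sqrt{\alpha_n}\,\E\bigl[\nabla f(\mathbf{\tilde Q_X}^{-\top}X)^{\top}\mathbf{\tilde Q_X}^{-\top}X\bigr]$. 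Multiplying a mean-zero scalar by a (near-)deterministic quantity produces no cancellation --- the product is simply of size (fluctuation of the sparsity estimate) times (size of the mean) --- and no independence or orthogonality argument is available, since $\hat\alpha_n$ and the rows $\hat X_i$ are both functions of the same $\mathbf{A}$. As written, your $T_1$ is still only controlled at order $\sqrt{\log n}$, so the central step does not go through.

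What actually closes this (and what the paper's short Slutsky argument exploits) is that the estimated sparsity enters only through the single multiplicative factor $\sqrt{\alpha_n/\hat\alpha_n}$, whose \emph{relative} error is much smaller than what you read off Lemma \ref{sparsity_lemma}: writing $\hat\alpha_n/\alpha_n-1=\bigl(\binom{n}{2}^{-1}\sum_{i<j}X_i^{\top}\ipq X_j-1\bigr)+(\hat\alpha_n-\theta_n)/\alpha_n$, the first term is a nondegenerate bounded-kernel $U$-statistic of order $O_{\p}(n^{-1/2})$ and the second is $O_{\p}\bigl((n\sqrt{\alpha_n})^{-1}\bigr)=o_{\p}(n^{-1/2})$, so $\bigl|\sqrt{\alpha_n/\hat\alpha_n}-1\bigr|=O_{\p}(n^{-1/2})$ with no $\alpha_n^{-1/2}$ loss. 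Then the offending piece is bounded uniformly in $f$ by $C\sqrt{n\alpha_n}\,\bigl|\sqrt{\alpha_n/\hat\alpha_n}-1\bigr|=O_{\p}(\sqrt{\alpha_n})$, which vanishes precisely because $\alpha_n\to0$ in the regime where the sparsity must be estimated (the standing assumption of Corollary \ref{cor:scaling_estimate}; if $\alpha_n\equiv1$ no estimation is performed). The paper's own proof proceeds differently from your decomposition: it reuses the linear and quadratic Taylor terms from inside the proof of Lemma \ref{lem:fclt}, factors out $\sqrt{\alpha_n/\hat\alpha_n}$, and applies Slutsky's theorem, which is also why only convergence in probability is obtained. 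To repair your argument, replace the vague cancellation claim by exactly this: bound $\sup_f|T_1(f)|\le C\sqrt{n\alpha_n}\,\bigl|\sqrt{\alpha_n/\hat\alpha_n}-1\bigr|$ using compactness of the support and $\|\hat X_i\|=O(\sqrt{\alpha_n})$, and invoke the $O_{\p}(n^{-1/2})$ relative rate together with $\alpha_n\to0$.
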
 

\begin{proof}[Proof of Corollary \ref{cor:scaling_estimate}]
Now, the result follows by simply noting that the functional CLT still holds in probability, and hence the result holds with $\xhat/\alpha_n^{1/2}$ and $\yhat/\beta_m^{1/2}$ replaced with $\xhat/\hat \alpha_n^{1/2}$ and $\yhat/\hat \beta_m^{1/2}$.
\end{proof}

\subsection{Proofs in \cref{sec:algorithm}} \label{sec:proofsprops}
In this section we prove \cref{thm:iterativeconvergence}, \cref{prop3}, and \cref{prop2}.  
\subsubsection{Proof of \cref{thm:iterativeconvergence}}
\begin{proof}[Proof of \cref{thm:iterativeconvergence}]
First, note that
\begin{align*}
    \| \mathbf{\hat W}_{\mathrm{init}}\t \mathbf{\hat X}\t \mathbf{\hat \Pi}^{(0)} \mathbf{\hat Y} - (\mathbf{\hat W}^{(\infty)})\t \mathbf{\hat X} \mathbf{\hat \Pi}^{(\infty)} \mathbf{\hat Y} \|_F \leq  C\| \mathbf{\hat W}_{\mathrm{init}}\t \xhat\t \mathbf{\hat \Pi}^{(0)} \mathbf{\hat Y} - (\mathbf{\hat W}^{(\infty)})\t \mathbf{\hat X}\t \mathbf{\hat \Pi}^{(\infty)} \mathbf{\hat Y} \|.
\end{align*}
Now consider the term on the right hand side.  Note that
\begin{align*}
    \| \mathbf{\hat W}_{\mathrm{init}}\t \xhat\t \mathbf{\hat \Pi}^{(0)} &\mathbf{\hat Y} - (\mathbf{\hat W}^{(\infty)})\t \mathbf{\hat X}\t \mathbf{\hat \Pi}^{(\infty)} \mathbf{\hat Y} \| \\
    &= \sup_{\|u\|=\|v\| = 1} \bigg| \sum_{l,l'} u_l v_{l'} \bigg( \sum_{i,j} \big(  \xhat\mathbf{\hat W}_{\mathrm{init}}\big)_{il} \big(\mathbf{\hat \Pi}^{(0)}\big)_{ij} \big(\mathbf{\hat Y}\big)_{jl'} - (\mathbf{\hat X}\mathbf{\hat W}^{(\infty)})_{il} \mathbf{\hat \Pi}^{(\infty)}_{ij} \mathbf{\hat Y}_{il'} \bigg) \bigg|.
\end{align*}
\textcolor{black}{Let $W_1(\cdot,\cdot)$ denote the 1-Wasserstein distance between two probability measures}.    Above, if $u$ and $v$ are fixed, let $f(x,y)$ denote the constant function $f(x,y) = (u,v)$.  Then $f$ is clearly Lipschitz with Lipschitz parameter at most 1, and hence by the well-known duality of the 1-Wasserstein norm with supremums over 1-Lipschitz functions, it holds that
\begin{align*}
    \bigg| \sum_{l,l'} u_l v_{l'} \bigg( \sum_{i,j} \big(  \xhat\mathbf{\hat W}_{\mathrm{init}}\big)_{il} \big(\mathbf{\hat \Pi}^{(0)}\big)_{ij} \big(\mathbf{\hat Y}\big)_{jl'} - (\mathbf{\hat X}\mathbf{\hat W}^{(\infty)})_{il} \mathbf{\hat \Pi}^{(\infty)}_{ij} \mathbf{\hat Y}_{il'} \bigg) \bigg| &\leq W_1\big( \mathbf{\hat \Pi}^{(0)}, \mathbf{\hat \Pi}^{(\infty)} \big),
\end{align*}
where $\mathbf{\hat \Pi}^{(0)}$ is understood as the joint probability measure placing mass on $\mathbf{\hat X} \mathbf{\hat W}_{\mathrm{init}}$ and $\mathbf{\hat Y}$, with $\mathbf{\hat \Pi}^{(\infty)}$ defined similarly. 
Since the right hand side is independent of $u$ and $v$, it holds that
\begin{align*}
    \| \mathbf{\hat W}_{\mathrm{init}}\t \xhat\t \mathbf{\hat \Pi}^{(0)} \mathbf{\hat Y} - (\mathbf{\hat W}^{(\infty)})\t \mathbf{\hat X}\t \mathbf{\hat \Pi}^{(\infty)} \mathbf{\hat Y} \|_F &\leq C W_1\big( \mathbf{\hat \Pi}^{(0)}, \mathbf{\hat \Pi}^{(\infty)} \big) \\
    &\leq C \big( W_1( \hat F_{\mathbf{\hat X}\mathbf{\hat W}_{\mathrm{init}}}, \hat F_{\mathbf{\hat X}\mathbf{\hat W}^{(\infty)}}) + W_1(\hat F_{\mathbf{\hat Y}},\hat F_{\mathbf{\hat Y}}) \big) \\
    &\leq C W_2 ( \hat F_{\mathbf{\hat X}\mathbf{\hat W}_{\mathrm{init}}}, \hat F_{\mathbf{\hat X}\mathbf{\hat W}^{(\infty)}}) \\
    &\leq C' \| \mathbf{\hat W}_{\mathrm{init}} - \mathbf{\hat W}^{(\infty)} \|_F,
\end{align*}
where the second inequality is due to Theorem 3 of \citet{deligiannidis_quantitative_2024}, and we implicitly used the fact that $\mathbf{\hat W}^{(\infty)}, \mathbf{\hat \Pi}^{(\infty)}$ is a fixed point; \textcolor{black}{namely, that $\mathbf{\hat \Pi}^{(\infty)}$ is the optimal coupling matrix aligning $\mathbf{\hat X}\mathbf{\hat W}^{(\infty)}$ and $\mathbf{\hat Y}$}.   \textcolor{black}{By the assumption of the theorem,} $\sigma_d\bigg( \mathbf{\hat X}\t \mathbf{\hat \Pi}^{(\infty)} \mathbf{\hat Y} \bigg) \geq C_0$, \textcolor{black}{where $\sigma_d(\cdot)$ denotes the $d$'th largest singular value of a matrix.}
Therefore, by Theorem 2.1 of \citet{chun-hui_perturbation_1989}, it holds that
\begin{align*}
    \| \mathbf{\hat W}_1 - \mathbf{\hat W}^{(\infty)} \|_F &\leq \frac{2}{C_0} C'  \| \mathbf{\hat W}_{\mathrm{init}} - \mathbf{\hat W}^{(\infty)} \|_F,
\end{align*}
and hence as long as $C_0 \geq 4 C'$, we obtain the upper bound
\begin{align*}
       \| \mathbf{\hat W}_1 - \mathbf{\hat W}^{(\infty)} \|_F &\leq \frac{1}{2}\| \mathbf{\hat W}_{\mathrm{init}} - \mathbf{\hat W}^{(\infty)} \|_F.  
\end{align*}
The result is completed by repeating the argument by induction.  
\end{proof}

\subsubsection{Proof of \cref{prop3}} \label{sec:proofs_props_34}
\begin{proof}[Proof of \cref{prop3}]
Suppose first that the sparsity factors $\alpha_n$ and $\beta_m$ are known. By Lemmas \ref{lem2} and \ref{lem:qtilde}, we have that under the null hypothesis there exist sequences of orthogonal matrices $\mathbf{W_X}$ and $\mathbf{W_Y}$ such that with probability at least $1 - n^{-2} - m^{-2}$
\begin{align}
\|\mathbf{Q_X-W_X\Tilde{Q}}\| = O\left( \frac{\sqrt{\log(n)}}{\sqrt{n}}\right); &\qquad \|\mathbf{Q_Y T^{-1}-W_Y\Tilde{Q}}\| = O\left( \frac{\sqrt{\log(m)}}{\sqrt{m}}\right). \label{qtilde}
\end{align}
Furthermore, by Theorem \ref{thm:grdpg}, there exists block-orthogonal $\mathbf{W_*^X}$ and $\mathbf{W_*^Y}$ (depending on $n$ and $m$) such that with probability $1 - n^{-2} - m^{-2}$
\begin{align}
    \|\mathbf{\hat{X}}-\alpha_n^{1/2} \mathbf{X Q_X\inv W_*^X}\|_{2,\infty} = O\bigg( \frac{\log(n)}{\sqrt{n}} \bigg);  \qquad \|\mathbf{\hat{Y}}-\beta_m^{1/2} \mathbf{Y Q_Y\inv W_*^Y}\|_{2,\infty} = O\bigg( \frac{\log(m)}{\sqrt{m}} \bigg) \label{twoinfinity}
\end{align}
(note that $\mathbf{W_*^X}$ is taken to be the transpose of the block-orthogonal matrix from \cref{thm:grdpg}).
Define the event $\mathcal{A} := \{$\eqref{qtilde}   and \eqref{twoinfinity} hold.$\}$  Note that $\p(\mathcal{A}) \geq 1 - 2n^{-2} - 2m^{-2}$.  Note that on $\mathcal{A}$ we have that
\begin{align*}
    \| \xhat - \alpha_n^{1/2} \mathbf{X\tilde Q\inv W_X\t W_*^X}\|_{2,\infty} &= O\bigg( \frac{\log(n)}{\sqrt{n}} \bigg);  \\\|\mathbf{\hat{Y}}-\beta_m^{1/2} \mathbf{Y T\inv \tilde Q\inv W_Y\t W_*^Y}\|_{2,\infty} &= O\bigg( \frac{\log(m)}{\sqrt{m}} \bigg).
\end{align*}

Define $\Gamma_{\hat X,\hat Y}$ to be the set of couplings of $\hat F_{\hat  X/\alpha_n^{1/2}}$ and $\hat F_{\hat  Y/\beta_m^{1/2}}$.  
We have that for any block-orthogonal $\mathbf{W}_n$, the minimizer $\mathbf{\hat W}_n$ satisfies
\begin{align*}
        d_2(\hat{F}_{\hat{X}/\alpha_n^{1/2}},\hat{F}_{\hat{Y}/\beta_m^{1/2}}\circ \mathbf{\hat{W}}_n)&\leq d_2(\hat{F}_{\hat{X}/\alpha_n^{1/2}},\hat{F}_{\hat{Y}/\beta_m^{1/2}}\circ \mathbf{W}_n) 
\end{align*}
To show this tends to zero, we choose an appropriate block-orthogonal matrix $\mathbf{W}_n$.  Define
\begin{align*}
    \mathbf{W}_n := \mathbf{(W_*^X W_X)\t ( W_Y W_*^Y)}.
\end{align*}
Note that under the null hypothesis $F_X \simeq F_Y$ we have that $F_X \circ  \mathbf{\tilde Q\inv} = F_Y \circ \mathbf{T\inv} \circ \mathbf{\tilde Q}\inv$ for some $\mathbf{T} \in \mathbb{O}(p,q)$.  Then, by the rotational invariance of the Euclidean norm, we have that,
\begin{align*}
d_2(\hat{F}_{\hat{X}/\alpha_n^{1/2}} \circ \mathbf{W}_n,\hat{F}_{\hat{Y}/\beta_m^{1/2}}) 
&= d_2( \hat F_{\hat  X/\alpha_n^{1/2}} \circ \mathbf{(W_X\t W_*^X})\t, \hat F_{\hat  Y/\beta_m^{1/2}} \circ \mathbf{( W_Y W_*^Y)\t} ) \\
&\leq d_2( \hat F_{\hat  X/\alpha_n^{1/2}} \circ \mathbf{(W_X\t W_*^X})\t, \hat F_{X} \circ \mathbf{\tilde  Q}\inv) \\
&\quad+ d_2( \hat F_{\hat  Y/\beta_m^{1/2}} \circ \mathbf{( W_Y W_*^Y)\t}, \hat F_{Y} \circ \mathbf{T\inv \tilde Q}\inv ) \\
&\quad + d_2 ( \hat F_{X} \circ \mathbf{\tilde Q}\inv, F_{X} \circ \mathbf{\tilde Q} \inv ) +  d_2 ( \hat F_{Y} \circ \mathbf{T\inv\tilde Q}\inv, F_{Y} \circ \mathbf{T\inv \tilde Q} \inv ) 
\end{align*}
We show each term is small.  For the first two terms, note that these are the empirical CDFs of the points $\hat X_i$ and $X_i$, where the $X_i$ aare suitably transformed but fixed.  Consider the coupling $\gamma$ which places mass of $\frac{1}{n}$ at the joint observation $(\hat X_i,  X_i)$.  Then on the event $\mathcal{A}$,
\begin{align*}
    d_2( \hat F_{\hat  X/\alpha_n^{1/2}} \circ \mathbf{(W_X\t W_*^X})\t), \hat F_{X} \circ \mathbf{\tilde  Q}\inv) &\leq 
         \left(\frac{1}{n}\|\xhat\mathbf{(W_X\t W_*^X})\t /\sqrt{\alpha_n}-\mathbf{X}\mathbf{\tilde Q\inv}\|_F^2\right)^{\frac{1}{2}}\\
         &=\frac{1}{\sqrt{n}}\|\xhat /\sqrt{\alpha_n}- \mathbf{X}\mathbf{\tilde Q\inv}\mathbf{W_X\t W_*^X}\|_F\\
         &\leq \|\xhat/\sqrt{\alpha_n}- \mathbf{X}\mathbf{\tilde Q\inv}\mathbf{W_X\t W_*^X}\|_{2,\infty}\\
         &= O\left( \frac{\log(n)}{(n\alpha_n)^{1/2}}\right)
        \end{align*}
Similarly,
\begin{align*}
    d_2( \hat F_{\hat  Y/\beta_m^{1/2}} \circ \mathbf{(W_*^Y W_Y)}, \hat F_{Y} \circ \mathbf{T\inv \tilde Q}\inv ) &= O\left( \frac{\log(m)}{(m\beta_m)^{1/2}}\right).
\end{align*}
Finally, we bound the final two terms.  By assumption $\supp(F)$ is bounded and hence so is any fixed invertible linear transformation of $\supp(F)$.  Hence, since $\|X\|_{\infty} \leq M$ almost surely, we can apply Theorem 2 of \citet{fournier_rate_2015} to see that with probability $1- n^{-2}$ that
\begin{align*}
   d_2 ( \hat F_{X} \circ \mathbf{\tilde Q}\inv, F_{X} \circ \mathbf{\tilde Q} \inv ) = O\bigg( \frac{\log^{1/d}(n)}{n^{1/d}}\bigg).
\end{align*}
Similarly, 
\begin{align*}
    d_2 ( \hat F_{Y} \circ \mathbf{T\inv\tilde Q}\inv, F_{Y} \circ \mathbf{T\inv \tilde Q} \inv )  = O\bigg( \frac{\log^{1/d}(m)}{m^{1/d}}\bigg)
\end{align*}
with probability $1 - m^{-2}$.  Therefore, putting it all together, we see that with probability $1 - O(n^{-2} + m^{-2})$,
\begin{align*}
    d_2(\hat F_{\hat  X/\alpha_n^{1/2}}, \hat F_{\hat  Y/\beta_m^{1/2}} \circ \mathbf{\hat W}_n ) = O\left( \frac{\log^{1/d}(n)}{n^{1/d}} +\frac{\log^{1/d}(m)}{m^{1/d}} + \frac{\log(n)}{(n\alpha_n)^{1/2}} + \frac{\log(m)}{(m\beta_m)^{1/2}}\right).
\end{align*}

Finally, if the sparsity is not known, we have that  by Lemma \ref{sparsity_lemma},  $\frac{1}{\sqrt{\hat\alpha_n}} = \frac{1}{\sqrt{\alpha_n}}\bigg( 1 +O\bigg(\frac{\sqrt{\log(n)}}{n^{1/2}\alpha_n} \bigg)\bigg)$ with probability at least $1 - O(n^{-2})$. Hence, we see that
\begin{align*}
    \| \xhat (\hat \alpha_n^{-1/2} - \alpha_n^{-1/2}) \|_{2,\infty} &\leq   \| \xhat  \|_{2,\infty} O\bigg(\frac{\sqrt{\log(n)}}{n^{1/2}\alpha_n} \bigg) \\
    &= O\bigg(\frac{\sqrt{\log(n)}}{n^{1/2}\alpha_n} \bigg)\bigg(\| \xhat - \sqrt{\alpha_n}\mathbf{XQ_X\inv W_*^X} \|_{2,\infty} + \| \sqrt{\alpha_n}\mathbf{XQ_X\inv W_*^X} \|_{2,\infty} \bigg) \\
    &=  O\bigg(\frac{\sqrt{\log(n)}}{n^{1/2}\alpha_n} \bigg) \bigg( \frac{\log(n)}{\sqrt{n}} + O(\sqrt{\alpha_n}) \bigg) \\
    &= O\bigg( \frac{\log(n)^{3/2}}{ n\alpha_n} \bigg) + O\bigg( \sqrt{\frac{\log(n)}{ n\alpha_n}}\bigg) \\
    &= O\bigg(\sqrt{ \frac{\log(n)}{n\alpha_n}} \bigg).
\end{align*}
Therefore, by analogous arguments as in the setting with the sparsity factors known, replacing $\hat \alpha_n^{-1/2}$ and $\hat \beta_m^{-1/2}$ with $\alpha_n^{-1/2}$ and $\beta_m^{-1/2}$ is negligible compared to the $\ell_{2,\infty}$ bound.  Hence, we have that with probability at least $1 - O(n^{-2} + m^{-2})$
\begin{align*}
    d_2(\hat F_{\hat X/\hat\alpha_n^{1/2}}, \hat F_{\hat Y/\hat \beta_m^{1/2}} \circ \mathbf{\hat W}_n ) &= O\left( \frac{\log^{1/d}(n)}{n^{1/d}} +\frac{\log^{1/d}(m)}{m^{1/d}} + \frac{\log(n)}{(n\alpha_n)^{1/2}} + \frac{\log(m)}{(m\beta_m)^{1/2}}\right).
\end{align*}

\end{proof}

\subsubsection{Proof of Proposition \ref{prop2}}
\label{sec:proofs_props_12}

\begin{proof}[Proof of Proposition \ref{prop2}]
We will show the result holds for any two sequences of block-orthogonal matrices $\mathbf{W}_n^1$ and $\mathbf{W}_n^2$, since the result follows by taking $\mathbf{W}_n := \mathbf{W}_n^1 (\mathbf{W}_n^2)\t$.  
We note that it suffices to prove the result with the replacement $\mathbf{XQ_X\inv}$ and $\mathbf{YQ_Y\inv}$ instead of $\xhat/\alpha_n^{1/2}$ and $\yhat/\beta_m^{1/2}$, since the $\ell_{2,\infty}$ result in Theorem \ref{thm:grdpg}  and the Lipschitz property of $\kappa$ shows that 
\begin{align*}
  |  U_{n,m}(\xhat \mathbf{W_*^X} \mathbf{W}_n^1/\alpha_n^{1/2}, \yhat \wy \mathbf{W}_n^2/\beta_m^{1/2}) - U_{n,m}( \mathbf{X  Q_X\inv} \mathbf{W}_n^1, \mathbf{YQ_Y\inv}\mathbf{W}_n^2) | \to 0
\end{align*}
for any sequence of block-orthogonal matrices $\mathbf{W}_n^1$ and $\mathbf{W}_n^2$.

Suppose $F_X \not \simeq F_Y$, but suppose for contradiction that $$\liminf U_{n,m}( \mathbf{X Q_X\inv} \mathbf{W}_n^1, \mathbf{Y Q_Y\inv} \mathbf{W}_n^2) = 0$$ for some sequence of block-orthogonal matrices $\mathbf{W}_n^1, \mathbf{W}_n^2$.  By passing to a convergent subsequence (by compactness of the orthogonal group and the fact that $\mathbb{O}(p,q) \cap \mathbb{O}(d)$ is a subgroup), we may assume the limit exists. Let $\mathbf{\tilde Q_X\inv}$ and $\mathbf{\tilde Q_Y\inv}$ be the limiting matrices given by Lemma \ref{lem2}, and similarly for the sequences of block orthogonal matrices $\mathbf{W^{X}}$ and $\mathbf{W^{Y}}$.  
We have that
\begin{align}
    |U_{n,m}( \mathbf{X \tilde Q_X\inv} \mathbf{W}_n^1, \mathbf{Y \tilde Q_Y\inv}  \mathbf{W}_n^2) | &= |U_{n,m}( \mathbf{X \tilde Q_X\inv W_{X} (W_{X})\t}\mathbf{ W}_n^1, \mathbf{Y \tilde Q_Y\inv W_{Y} (W_{Y})\t} \mathbf{ W}_n^2) | \nonumber
    \\
    &\leq \bigg|U_{n,m}( \mathbf{X \tilde Q_X\inv W_{X} (W_{X})\t}\mathbf{ W}_n^1 ,\mathbf{Y \tilde Q_Y\inv W_{Y} (W_{Y})\t } \mathbf{ W}_n^2)  \nonumber \\
    &\qquad  \qquad \qquad    -U_{n,m}( \mathbf{X  Q_X\inv (W_{X})\t}\mathbf{ W}_n^1, \mathbf{Y Q_Y\inv (W_{Y})\t } \mathbf{ W}_n^2)  \bigg|\nonumber \\
    &\qquad
    + |U_{n,m}( \mathbf{X  Q_X\inv (W_{X})\t }\mathbf{ W}_n^1, \mathbf{Y Q_Y\inv (W_{Y})\t } \mathbf{ W}_n^2)  |. \label{ineq1}
\end{align}
We note that
\begin{align*}
    \bigg|U_{n,m}( \mathbf{X \tilde Q_X\inv W_{X} (W_{X})\t} \mathbf{W}_n^1, \mathbf{Y \tilde Q_Y\inv W_{Y} (W_{Y})\t } \mathbf{W}_n^2) -U_{n,m}( \mathbf{X  Q_X\inv (W_{X})\t } \mathbf{W}_n^1, \mathbf{ Y Q_Y\inv (W_{Y})\t } \mathbf{W}_n^2) \bigg|
\end{align*}
tends to zero by Lemma \ref{lem:qtilde}. 
Therefore, we see that 
\begin{align*}
    \limsup | U_{n,m}( \mathbf{X \tilde Q_X\inv } \mathbf{W}_n^1,  \mathbf{Y\tilde Q_Y\inv } \mathbf{W}_n^2)| &\leq \liminf |U_{n,m}( \mathbf{XQ_X\inv (W_X)\t } \mathbf{ W}_n^1,\mathbf{ YQ_Y\inv (W_Y)\t } \mathbf{W}_n^2) | 
\end{align*}
where by assumption the right hand side is presumed to exist. Note that the only terms on the left hand side that are random are the matrices $\mathbf{X}$ and $\mathbf{Y}$ whose rows are drawn i.i.d. $F_X$ and $F_Y$ respectively.  If the term on the right hand side tends to zero (which it does by assumption), then that implies that
\begin{align*} 
\limsup | U_{n,m}( \mathbf{X \tilde Q_X\inv } \mathbf{W}_n^1, \mathbf{ Y\tilde Q_Y\inv} \mathbf{ W}_n^2)|  \to 0.
\end{align*}
This implies that
\begin{align*}
    \| \mu[ F_X \circ \mathbf{\tilde Q_X\inv } \mathbf{W}_n^1 ]- \mu[F_Y \circ \mathbf{\tilde Q_Y\inv } \mathbf{W}_n^2 ] \|_{\mathcal{H}}^2 \to 0,
\end{align*}
by, e.g. Remark 1 in \citet{schneider_probability_2016} (as in the proof of Theorem \ref{thm:consistency_repeated}). The only quantities that are changing in $n$ and $m$ are $\mathbf{W}_n^1$ and $\mathbf{W}_n^2$.  Define the map
\begin{align}
    \mathbf{W} \mapsto \| \mu[ F_X \circ  \mathbf{\tilde Q_X\inv} ]- \mu[F_Y \circ \mathbf{\tilde Q_Y\inv W} ] \|_{\mathcal{H}}^2. \label{map}
\end{align}
By Lemma 6 in \citet{gretton_kernel_2012}, we have that for any two distributions $X\sim F$ and $Y \sim G$ that
\begin{align*}
    \| \mu[ F]- \mu[G \circ \mathbf{W}_1 ] \|_{\mathcal{H}}^2 - \| \mu[ F]- \mu[G \circ \mathbf{W}_2 ] \|_{\mathcal{H}}^2  &= 2\E_{F,G}\left[ \kappa(X,\mathbf{W}_1\t Y)- \kappa(X, \mathbf{W_2}\t Y) \right]\\
    &\quad + \E_{G}\left[ \kappa(\mathbf{W}_1\t Y, \mathbf{W}_1\t Y') - \kappa(\mathbf{W}_2\t Y, \mathbf{W_2\t}Y') \right]
\end{align*}
by the definition that $Y \sim G \circ \mathbf{W}$ if $\mathbf{W\t}Y \sim G$.  Hence, continuity of the map  in \eqref{map} follows from continuity of $\kappa$.  Therefore, by the assumption that $\kappa$ is radial, we see that since
\begin{align*}
      \| \mu[ F_X \circ \mathbf{\tilde Q_X\inv } ]- \mu[F_Y \circ \mathbf{\tilde Q_Y\inv W}_n^2(\mathbf{W}_n^1)\t ] \|_{\mathcal{H}}^2 \to 0,
\end{align*}
we must have that some subsequence of $\mathbf{W}_n^2(\mathbf{W}_n^1)\t$ is converging (since the set $\mathbb{O}(p,q)\cap \mathbb{O}(d)$ is compact).  Let $\mathbf{\tilde W}$ be this subsequential limit.  Then this implies that 
\begin{align*}
     \| \mu[ F_X \circ \mathbf{\tilde Q_X\inv } ]- \mu[F_Y \circ \mathbf{\tilde Q_Y\inv \tilde W} ] \|_{\mathcal{H}}^2 = 0.
\end{align*}
However, under the alternative, since $\kappa$ is characteristic, we have that $\mu[F_X] \neq \mu[F_Y \circ \mathbf{T}]$ for any $\mathbf{T} \in \mathbb{O}(p,q)$.  But then the above equation is a contradiction.   Furthermore, working backwards, we have the chain of inequalities
\begin{align*}
     \inf_{\mathbf{W} \in \mathbb{O}(d)\cap\mathbb{O}(p,q)} \| \mu[F_X \circ \mathbf{\tilde Q_X\inv}]& - \mu[F_Y \circ \mathbf{\tilde Q_Y\inv W}] \|_{\mathcal{H}}^2 \\&\leq \liminf | U_{n,m}( \mathbf{X \tilde Q_X\inv } \mathbf{W}_n^1, \mathbf{ Y\tilde Q_Y\inv} \mathbf{ W}_n^2)| \\&\leq \limsup | U_{n,m}( \mathbf{X \tilde Q_X\inv } \mathbf{W}_n^1, \mathbf{ Y\tilde Q_Y\inv} \mathbf{ W}_n^2)| \\
     &\leq \liminf |U_{n,m}( \mathbf{XQ_X\inv (W_X)\t } \mathbf{ W}_n^1,\mathbf{ YQ_Y\inv (W_Y)\t } \mathbf{W}_n^2) |,
\end{align*}
which shows that $$C := \inf_{\mathbf{W} \in \mathbb{O}(d)\cap\mathbb{O}(p,q)} \| \mu[F_X \circ \mathbf{\tilde Q_X\inv}] - \mu[F_Y \circ \mathbf{\tilde Q_Y\inv W}] \|_{\mathcal{H}}^2 $$ is a lower bound independent of the particular sequence $\mathbf{W}_n^1, \mathbf{W}_n^2$, \textcolor{black}{which is strictly larger than zero under the alternative since the infimum is over all $\mathbf{W} \in \mathbb{O}(p,q)$}. This proves the result.
\end{proof}

\subsection{Proof of the Frobenius Concentration (Lemma \ref{lem:frobenius})} \label{sec:proof_frob}
In this section we present the proof of Lemma \ref{lem:frobenius}.  
We will need the following lemma, adapted from Lemma A.4 \citet{athreya_numerical_2023}.  

\begin{lemma}\label{lem:numtol}
Let $\mathbf{A}$ be a matrix whose entries are generated via $\mathbf{A}_{ij} \sim Bernoulli( \mathbf{P}_{ij})$ for $i \leq j$, and let $\mathbf{V} = \mathbf{U_X |\Lambda_X|^{-1/2}}$ .  Then with probability at least $1 - O(n^{-2})$,
\begin{align*}
  \bigg|  \|(\mathbf{A} - \mathbf{P}) \mathbf{V} \|_F^2 - \E (  \|(\mathbf{A} - \mathbf{P}) \mathbf{V} \|_F^2 ) \bigg|= O \bigg(\sqrt{\frac{\log(n)}{n\alpha_n }}\bigg).
\end{align*}
\end{lemma}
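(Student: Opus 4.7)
The plan is to apply Talagrand's concentration inequality for convex Lipschitz functions of bounded independent random variables to $f(\xi) := \|(\mathbf{A} - \mathbf{P})\mathbf{V}\|_F = \sqrt{W}$, where $W := \|(\mathbf{A}-\mathbf{P})\mathbf{V}\|_F^2$, and then convert the resulting tail bound on $\sqrt{W}$ into one on $W$. Conditionally on $\mathbf{X}$, the matrix $\mathbf{V}$ is deterministic and the upper-triangular entries $\xi_{ij} := (\mathbf{A}-\mathbf{P})_{ij}$ for $i<j$ are independent centered random variables with $|\xi_{ij}| \leq 1$; viewing $\mathbf{A}-\mathbf{P}$ as the zero-diagonal symmetrization of $\xi$, the map $f$ is a seminorm of $\xi$ (hence convex), and one checks directly that $f$ is $L$-Lipschitz in the Euclidean norm with $L \leq \sqrt{2}\,\|\mathbf{V}\|$.

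I would first condition on the high-probability event $\mathcal{E}_{\mathbf{X}}$ that $|\lambda_k(\mathbf{P})| \asymp n\alpha_n$ for every $k = 1, \ldots, d$, which has probability at least $1 - O(n^{-3})$ by standard concentration of $\mathbf{X}^\top \mathbf{X}/n$ around $\mathbf{\Delta}_X$ combined with the implicit assumption that $\mathbf{\Delta}_X\mathbf{I}_{p,q}$ has nonzero eigenvalues. On $\mathcal{E}_{\mathbf{X}}$, $L^2 \leq 2\|\mathbf{V}\|^2 = 2/|\lambda_d(\mathbf{P})| = O(1/(n\alpha_n))$, and a direct calculation using $\mathbf{P}_{ij} \leq \alpha_n$ and $\|\mathbf{V}\|_F^2 = \sum_{k} 1/|\lambda_k(\mathbf{P})| = O(1/(n\alpha_n))$ gives
\[
\E[W \mid \mathbf{X}] \;=\; \sum_{k=1}^{d} \sum_{i=1}^{n} \sum_{j \neq i} \mathbf{P}_{ij}(1 - \mathbf{P}_{ij})\mathbf{V}_{jk}^2 \;=\; O(1).
\]
Applying Talagrand's inequality conditionally on $\mathbf{X}$ then yields, for a universal $c > 0$,
\[
\p\!\left( \bigl|\sqrt{W} - \E[\sqrt{W} \mid \mathbf{X}]\bigr| > t \,\Big|\, \mathbf{X} \right) \;\leq\; C \exp\!\left(-c\, t^2 / L^2\right),
\]
so $t = C' \sqrt{\log(n)/(n\alpha_n)}$ with $C'$ sufficiently large makes the right-hand side at most $n^{-2}$.

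To translate this into concentration for $W$, I would use the factorization $W - (\E\sqrt{W})^2 = (\sqrt{W} - \E\sqrt{W})(\sqrt{W} + \E\sqrt{W})$. Since $\E[W \mid \mathbf{X}] = O(1)$ and $\mathrm{Var}(\sqrt{W} \mid \mathbf{X}) = O(L^2) = O(1/(n\alpha_n))$ by sub-Gaussian tail integration, both $\sqrt{W}$ and $\E[\sqrt{W} \mid \mathbf{X}]$ are of order $1$ on the concentration event, so the product is $O(\sqrt{\log(n)/(n\alpha_n)})$. The residual gap $|\E[W \mid \mathbf{X}] - (\E[\sqrt{W} \mid \mathbf{X}])^2| = \mathrm{Var}(\sqrt{W}\mid \mathbf{X}) = O(1/(n\alpha_n))$ is of strictly smaller order than $\sqrt{\log(n)/(n\alpha_n)}$ under the assumption $n\alpha_n \gg \log^4 n$, and a union bound with $\p(\mathcal{E}_\mathbf{X}^c) = O(n^{-3})$ yields the stated unconditional bound. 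The main obstacle is the clean translation from $\sqrt{W}$ to $W$, which requires showing both quantities are $\Theta(1)$ on the good event; an alternative but more technical route is a direct Hanson-Wright--Bernstein inequality applied to the quadratic form $\xi^\top \mathbf{B} \xi$ with $\mathbf{B}$ induced by the symmetrization, which requires careful bookkeeping of $\|\mathbf{B}\|$, $\|\mathbf{B}\|_F^2$, and $\sum_a \mathbf{B}_{aa}^2$ arising from the combinatorial structure of shared vertices.
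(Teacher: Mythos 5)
Your proposal is correct, but it takes a genuinely different route from the paper. The paper (following Athreya et al.) applies the entropy method of Boucheron, Lugosi and Massart directly to the squared quantity $Z=\|(\mathbf{A}-\mathbf{P})\mathbf{V}\|_F^2$: replacing a single entry $(r,s)$ of $\mathbf{A}$ by an independent copy $\mathbf{A}'_{rs}$, it shows $\E_{\mathbf{A}'}\sum_{r\le s}(Z-Z_{rs})^2\le aZ+b$ with $a=O((n\alpha_n)^{-1})$ and $b=O((n\alpha_n)^{-1})$, which gives a Bernstein-type tail for $Z$ itself, and the rate $\sqrt{\log(n)/(n\alpha_n)}$ then follows from $\E Z=O(1)$. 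You instead apply Talagrand's inequality for convex Lipschitz functions of bounded independent entries to the unsquared norm $\sqrt{W}$, using that the map from the independent entries to $\|(\mathbf{A}-\mathbf{P})\mathbf{V}\|_F$ is a seminorm with Lipschitz constant at most $\sqrt{2}\,\|\mathbf{V}\|=O((n\alpha_n)^{-1/2})$ on the event $|\lambda_d(\mathbf{P})|\ge c\,n\alpha_n$, and then convert via the factorization $W-(\E\sqrt{W})^2=(\sqrt{W}-\E\sqrt{W})(\sqrt{W}+\E\sqrt{W})$ together with $|\E W-(\E\sqrt{W})^2|=\mathrm{Var}(\sqrt{W})=O((n\alpha_n)^{-1})$, which is of smaller order than the target rate. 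Both routes rest on the same two estimates, namely $\E[W\mid\mathbf{X}]=O(1)$ and $\|\mathbf{V}\|^2=O((n\alpha_n)^{-1})$, and both deliver the bound with probability $1-O(n^{-2})$. The paper's self-bounding argument buys a direct tail bound on $W$ with no square-root conversion; your argument buys an off-the-shelf inequality and a very clean Lipschitz computation at the cost of the conversion step — note that for that step you only need the upper bounds $\sqrt{W}+\E[\sqrt{W}\mid\mathbf{X}]=O(1)$ on the good event (not $\Theta(1)$, as you worry), which you already have from $\E[W\mid\mathbf{X}]=O(1)$ and the concentration event itself. Your explicit conditioning on the eigenvalue event for $\mathbf{X}$ also makes precise what the paper leaves implicit when it invokes $|\lambda_d|^{-1}\le C(n\alpha_n)^{-1}$; the heavier Hanson--Wright alternative you mention is unnecessary.
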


\begin{proof}[Proof of Lemma \ref{lem:numtol}]
We follow the proof in \citet{athreya_numerical_2023}, though we have a slightly different argument for the inclusion of the sparsity factor. Let $\mathbf{A'} \sim \mathbf{P}$ be independent from $\mathbf{A}$. 
For $1\leq r\leq s\leq n$ define the term
\begin{align*}
    Z_{rs} := \| (\mathbf{A}^{(r,s)} - \mathbf{P)V}\|_F^2,
\end{align*}
where the matrix $\mathbf{A}^{(r,s)}$ agrees with $\mathbf{A}$ in every entry except the $(r,s)$ and $(s,r)$ones, where it equals $\mathbf{A'}$. Defining $Z := \mathbf{\| (A -P) V\|}_F^2$, we see that for $r \neq s$
\begin{align*}
    Z - Z_{rs} &= 2\left(\mathbf{A-A}^{\prime}\right)_{r s}\left[\left[\mathbf{(A-P) V V}\t\right]_{r s}+\left[\mathbf{(A-P) V V}\t\right]_{s r}+\left(\mathbf{A}^{\prime}-\mathbf{P}\right)_{r s}\left(\left(\mathbf{V V}\t\right)_{s s}+\left(\mathbf{V V}\t\right)_{r r}\right)\right]
\end{align*}
Furthermore, we have that
\begin{align*}
    (Z - Z_{rs})^2 \leq \begin{cases} 16 \bigg( [\mathbf{(A - P)VV\t}]^2_{rs} + [\mathbf{(A - P)VV\t}]^2_{sr} +\left(\mathbf{A}^{\prime}-\mathbf{P}\right)_{r s}^2 \big[ (\mathbf{VV\t})_{ss}^2 + \mathbf{VV\t})_{rr}^2\big] \bigg) & r\neq s\\
    8 \bigg( [\mathbf{(A - P)VV}\t]_{rr}^2 +\left(\mathbf{A}^{\prime}-\mathbf{P}\right)_{r s}^2  [\mathbf{VV}\t]_{rr}^2\bigg) & r= s\end{cases}
\end{align*}
Hence, 
\begin{align*}
    \sum_{r\leq s} (Z - Z_{rs})^2 &\leq 16 Z \| \mathbf{V} \|_2^2 + 8 \sum_{r} (\mathbf{VV\t})_{rr}^2 + 16 \sum_{r < s} (\mathbf{A' - P})_{rs}^2 [(\mathbf{VV\t}_{ss}^2 + (\mathbf{VV\t})_{rr}^2] \\
    &= 16Z \| \mathbf{V} \|_2^2 + 8\| \text{diag}(\mathbf{VV\t}) \|_F^2 + 16\sum_{s=1}^{n} \sum_{r=1}^{s-1}( \mathbf{A' - P})_{rs}^2[ (\mathbf{VV\t})_{ss}^2 + (\mathbf{VV\t})_{rr}^2].
\end{align*}
For the final term above, we have that
\begin{align*}
\E_{\mathbf{A'}} \bigg[ 
\sum_{s=1}^{n} \sum_{r=1}^{s-1}( \mathbf{A' - P})_{rs}^2[ (\mathbf{VV\t})_{ss}^2 + (\mathbf{VV\t})_{rr}^2]\bigg] &= \sum_{s=1}^{n} \sum_{r=1}^{s-1} \E_{\mathbf{A'}}( \mathbf{A' - P})_{rs}^2 [ (\mathbf{VV\t})_{ss}^2 + (\mathbf{VV\t})_{rr}^2] \\
&\leq 2 n\alpha_n \| \text{diag}(\mathbf{VV\t}) \|_F^2.
\end{align*}
Moreover, from the definitions of $\mathbf{V}$, we have that $\|\text{diag}( \mathbf{VV\t})\|^2_F \leq d \lambda_d^{-2} \leq Cd(n\alpha_n)^{-2}$, and $\|\mathbf{V}\|_2^2 = |\lambda_d|\inv \leq C(n\alpha_n)\inv$.  Hence, we see that
\begin{align*}
     \E_{\mathbf{A'}} \sum_{r\leq s} (Z - Z_{rs})^2 &\leq \frac{C_1}{n\alpha_n}Z + \frac{C_2d}{(n\alpha_n)^2} + \frac{C_3d}{n\alpha_n},
\end{align*}
Define $a :=\frac{C_1}{n\alpha_n}$ and $b := \frac{C_2d}{(n\alpha_n)^2} + \frac{C_3d}{n\alpha_n}$. By Theorems 5 and 6 in \citet{boucheron_concentration_2003}, we have that
\begin{align*}
    \p( | Z - \E Z| > t ) &\leq 2 \exp\bigg( \frac{-t^2}{4a \E(Z) + 4b + 2at} \bigg) \\
    &\leq 2 \exp\bigg( \frac{ -t^2 n\alpha_n}{ 4C_1 \E(Z) + 4C_2 d(n\alpha_n)\inv + 4C_3d + 2C_1 t} \bigg)\\
    &\leq 2\exp \bigg( \frac{ -t^2 n\alpha_n}{ \tilde C_1 + \tilde C_2 t}\bigg)
\end{align*}
for some constants $\tilde C_1$ and $\tilde C_2$ (depending on $d$) and for $n\alpha_n$ sufficiently large.  Note that we implicitly used that $\E Z = O(1)$, which can be seen from the fact that
\begin{align*}
    \E(Z) &= \E \| (\mathbf{A} - \mathbf{P)V}\|_F^2 \\
    &= \sum_{i=1}^{n} \sum_{k=1}^{d} \E \bigg( \sum_{j} ( \mathbf{A}_{ij} - \mathbf{P}_{ij}) \mathbf{V}_{jk}\bigg)^2 \\
    &= \sum_{i=1}^{n} \sum_{k=1}^{d} \sum_{j=1}^{n} \mathbf{V}_{jk}^2 \E( ( \mathbf{A}_{ij} - \mathbf{P}_{ij}) )^2  +  \sum_{j\neq l}^{n} \mathbf{V}_{jk}\mathbf{V}_{lk}\E( ( \mathbf{A}_{ij} - \mathbf{P}_{ij})( \mathbf{A}_{il} - \mathbf{P}_{il}) ) \\
    &= \sum_{i=1}^{n} \sum_{k=1}^{d} \sum_{j=1}^{n} \mathbf{V}_{jk}^2 \mathbf{P}_{ij}(1- \mathbf{P}_{ij}) \\
    &\leq n \alpha_n  \sum_{k=1}^{d} \sum_{j=1}^{n}\mathbf{V}_{jk}^2 \\
    &\leq  n \alpha_n \| \mathbf{U_X |\Lambda_X|}^{-1/2} \|_F^2 \\
    &\leq C
\end{align*}
for some constant $C$ depending on $d$ and $\lambda_d$.  Hence, with the choice $t = \tilde C \sqrt{\frac{\log(n)}{n\alpha_n}}$ for some constant $\tilde C$ depending on $d$, this is bounded above by $2n^{-2}$.  
\end{proof}

\noindent We are now ready to prove Lemma \ref{lem:frobenius}.



\begin{proof}[Proof of Lemma \ref{lem:frobenius}]
First, by the proof of Theorem 5 in \citet{rubindelanchy_statistical_2022}, we note that there exists an orthogonal matrix $\mathbf{W_*} \in \mathbb{O}(d) \cap \mathbb{O}(p,q)$ (see equations 5 and 6 in \citet{rubindelanchy_statistical_2022}) such that
\begin{align*}
   \mathbf{ \hat U |\hat \Lambda|^{1/2} - U |\Lambda|^{1/2} W_*\t } &= \mathbf{(A -}  \mathbf{P)U|\Lambda|^{-1/2}W_*\t}\ipq + \mathbf{R},
\end{align*}
where the matrix $\mathbf{R}$ satisfies $$\| \mathbf{R} \|_{2,\infty} = O\bigg( \frac{d^{1/2}\log^{1/2}(n)}{\sqrt{n} (n\alpha_n)^{1/2}}\bigg).$$  Passing to the Frobenius norm, we see that $$\|\mathbf{R}\|_F = O\bigg( \frac{d^{1/2}\log^{1/2}(n)}{ (n\alpha_n)^{1/2}}\bigg).$$  
This proves the first claim.  Hence, 
\begin{align*}
    \|\xhat - \mathbf{\tilde X W_*\t} \|_F &= \| \mathbf{(A- P)U|\Lambda|^{-1/2}}\|_F + O\bigg(  \sqrt{\frac{d\log (n)}{ n\alpha_n}}\bigg).
\end{align*}
We then can apply Lemma \ref{lem:numtol} to see that
 \begin{align*}
     \p\left( | \| \mathbf{(A - P)V}\|_F^2 - C(\mathbf{P})^2| > C  \sqrt{\log(n)/n\alpha_n }\right) = O(n^{-2}),
 \end{align*}
 where $\mathbf{V = U|\Lambda|^{-1/2}}$ and $C^2(\mathbf{P}) =\E \|(\mathbf{A - P)V}\|_F^2$. 
The rest of the proof is similar to \citet{tang_limit_2018}.  By similar manipulations as those leading to Equation 18 in \citet{rubindelanchy_statistical_2022}, we have that
\begin{align*}
    \mathbf{(A-  P) U|\Lambda|^{-1/2} W_*\t} \ipq = \alpha_n^{-1/2}\mathbf{(A - P) X (X\t X)\inv} \ipq \mathbf{Q_X}\inv. 
\end{align*}
By Lemma \ref{lem:qtilde}, we have that there exists a sequence of block-orthogonal matrices such that $\mathbf{W_n\t Q_X\inv} \to \mathbf{\tilde Q\inv}$ almost surely. Hence, we have that
\begin{align*}
  \mathbb{E}  \| \mathbf{(A-  P)U|\Lambda|^{-1/2}}\|_F^2 
    &= \frac{1}{\alpha_n}\tr \bigg( \mathbf{W_n\t Q_X\inv (X\t X)\inv X\t \E(A - P)^2 X (X\t X)\inv Q_X^{-\top} W_n}\bigg) \\
    &= \tr \bigg( \mathbf{W_n\t Q_X\inv} (n \mathbf{(X\t X})\inv ) \bigg[ \frac{ \mathbf{X\t \E(A - P)^2 X}}{n^2 \alpha_n} \bigg]  (n \mathbf{(X\t X)}\inv) \mathbf{ Q_X^{-\top} W_n}\bigg).
\end{align*}
By the strong law of large numbers the term $\mathbf{X\t X}/n \to \Delta$ almost surely, so $n \mathbf{(X\t X)}\inv \to \Delta\inv$ almost surely by the continuous mapping theorem.  In addition, we have that
\begin{align*}
    \frac{ \mathbf{X\t \E(A - P)^2 X}}{n^2 \alpha_n} &= \frac{1}{n^2 \alpha_n}\sum_{i=1}^{n} \sum_{k} X_i X_i\t( p_{ik}( 1- p_{ik})) \\
    &=  \frac{1}{n^2 }\sum_{i=1}^{n} \sum_{k} X_i X_i\t( X_i\t \ipq X_k - \alpha_n X_i\t \ipq X_k X_k\t \ipq X_i).
\end{align*}
As $n \to \infty$, this is tending to the matrix $\Gamma$, where $\Gamma$ is defined via
\begin{align*}
    \Gamma := \begin{cases} \E( XX\t (X\t \ipq \mu - X\t \ipq \Delta\ipq X )) & \alpha \equiv 1 \\
    \E( XX\t (X\t \ipq \mu)) &\alpha \to 0,
\end{cases}
\end{align*}
where $\mu = \E(X)$ and $\Delta= \E XX\t$.  Hence, putting it together, we have almost surely,
\begin{align*}
    \| \xhat - \mathbf{\tilde X W_*\t} \|_F^2 \to \tr \bigg( \mathbf{\tilde Q\inv \Delta\inv \Gamma \Delta\inv \tilde Q^{-\top}} \bigg).
\end{align*}
\end{proof}

\subsection{Proof of the Functional CLT (Lemma \ref{lem:fclt}) and Lemma \ref{cor:fclt_estimate}} \label{sec:proof_fclt}
In this section, we prove Lemma \ref{lem:fclt} and Lemma \ref{cor:fclt_estimate}.

\begin{proof}[Proof of Lemma \ref{lem:fclt}]
We follow the proof by analogy to the proof Lemma 3 of \citet{tang_nonparametric_2017}, though we use the decomposition from Lemma \ref{lem:frobenius}.  As every $f \in \mathcal{F}$ \textcolor{black}{is twice continuously differentiable}, for $f \in \mathcal{F}$ we Taylor expand to note that
\begin{align*}
    \frac{\sqrt{\alpha_n}}{\sqrt n}  &\sum_{i=1}^{n} \big( f( (\mathbf{W_*^X} )\t \hat X_i/\sqrt{\alpha_n}) - f( \tilde X_i/\sqrt{\alpha_n}) \big) \\&= \frac{\sqrt{\alpha_n}}{\sqrt n} \sum_{i=1}^{n} (\partial f)(\tilde X_i) \frac{((\mathbf{W_*^X} )\t \hat X_i - \tilde X_i )}{\sqrt \alpha_n} \\
    &\quad + \frac{\sqrt{\alpha_n}}{2\sqrt n} \sum_{i}  \frac{( (\mathbf{W_*^X} )\t\hat X_i - \tilde X_i )\t (\partial^2 f)(X_i^*) ((\mathbf{W_*^X} )\t \hat X_i - \tilde X_i) }{\alpha_n},
\end{align*}
for some $X_i^*$.  \textcolor{black}{Here and throughout our proofs the quantity $(\partial f)(\tilde X_i)$ is to be understood as a row vector of partial derivatives of $f$ evaluated at $\tilde X_i$, and $(\partial^2 f)(X_i^*)$ is to be understood as a matrix of all cross-derivatives.}  The second order term is straightforwardly bounded by noting that by assumption $\bar\Omega$ is compact, and hence we can apply Lemma \ref{lem:frobenius} to see that there exists some constant $C$ such that
\begin{align*}
    \sup_{f\in \mathcal{F}} \sum_{i=1}^{n} \frac{ ((\mathbf{W_*^X} )\t\hat X_i - \tilde X_i )\t (\partial^2 f)(X_i^*) ( (\mathbf{W_*^X} )\t\hat X_i - \tilde X_i) }{\sqrt{n\alpha_n}} &\leq \sup_{f \in \mathcal{F},X \in \bar \Omega} \frac{\| \partial^2(f)(X) \| \| (\mathbf{W_*^X} )\t\hat X_i -\tilde X_i \|_F^2}{\sqrt{n\alpha_n}} \\
    &\leq \frac{C}{\sqrt{n\alpha_n}},
\end{align*}
which converges to zero almost surely.  

We now bound the linear terms.  Let $\mathbf{M}(\partial f)  = \mathbf{M}(\partial f; \tilde X_1, \dots., \tilde X_n) \in \R^{n\times d}$ be the matrix whose rows are the vectors $\partial(f)(\tilde X_1)$.  Then
\begin{align*}
    \zeta(f) :&= \frac{1}{\sqrt{n}} \sum_{i=1}^{n} (\partial f)(\tilde X_i)\t ((\mathbf{W_*^X} )\t\hat X_i - \tilde X_i) \\
    &= \frac{1}{\sqrt{n}} \tr\big( (\xhat \mathbf{W_*^X}  - \mathbf{\tilde X)[M}(\partial f)]\t \big) \\
    &= \frac{1}{\sqrt{n}} \tr\big( \mathbf{(A - P)U |\Lambda|^{-1/2}}\ipq)[\mathbf{M}(\partial f)]\t \big) + \frac{1}{\sqrt{n}} \tr\big( \mathbf{R}[\mathbf{M}(\partial f)]\t \big),
\end{align*}
where $\mathbf{R}$ is the residual matrix in \ref{lem:frobenius}.  Recall the second term satisfies
\begin{align*}
   \frac{1}{\sqrt{n}} \tr\big( \mathbf{R}[\mathbf{M}(\partial f)]\t \big) &=  \frac{1}{\sqrt{n}} \langle \mathbf{R},\mathbf{M}(\partial f) \rangle \\
   &\leq \sup_{f\in\mathcal{F}, X\in \bar \Omega} \frac{ \sqrt{n} \| \partial f(X) \| }{\sqrt{n}} \| \mathbf{R}\|_F \\
   &\leq \frac{C \sqrt{n}}{\sqrt{n}}\| \mathbf{R}\|_F \\
   &\leq \frac{C \sqrt{\log(n)}}{\sqrt{n\alpha_n}},
\end{align*}
where the penultimate inequality comes from the fact that $\bar \Omega$ can be taken to be compact by assumption, and since $\mathcal{F}$ is twice-continuously differentiable, the gradient is Lipschitz on any fixed transformation of the support.  

Now, we show that the final term converges to zero.  
The rest of the proof is largely the same as in \citet{tang_nonparametric_2017}. 
Define the set of derivatives of $\partial \mathcal{F}:= \{\partial f: f\in\mathcal{F}\}$.  Let $\| \partial f\|_{\infty}$ be the maximum Euclidean norm attained by $f$ on $\bar \Omega$.  Note that by enlarging it if necessary, $\bar \Omega$ can be taken to be compact and contain the $\hat X_i$'s by the fact that $\kappa$ is twice continuously differentiable on $\R^d$ and by virtue of Theorem 
 \ref{thm:grdpg} and Lemma \ref{lem:qtilde}.  Therefore the set of derivatives is totally bounded; define $M := \sup_{\partial \mathcal{F}} \| \partial f\|_{\infty}$.  

Then for any $j$ there exists a finite subset $S_j$ of covering functions such that for any $g \in \partial \mathcal{F}$, we have that $\| g - f_j\|_{\infty} \leq 2^{-j} M$.   Define the mapping $\mathcal{P}_j$ as the mapping that assigns the function $g \in \partial \mathcal{F}$ to its closest function $f_j \in S_j$.  Then we have that
\begin{align*}
  \sup_{f} \bigg|  \frac{1}{\sqrt{n}} \tr&\big( (\mathbf{A - P)U|\Lambda^{-1/2}}\ipq ) [\mathbf{M}(\partial f)]\t\big) \bigg| \\&= \sup_{f} \bigg| \frac{1}{\sqrt{n}} \sum_{i=1}^{n} \sum_{j=0}^{\infty} (\mathcal{P}_{j+1}\partial f - \mathcal{P}_j \partial f)(\tilde X_i)\t \big[ ( \mathbf{A- P)U|\Lambda|^{-1/2}}\ipq \big]_i \bigg|\\
  &\leq \sum_{j=0}^{\infty}  \sup_{f} \bigg| \sum_{i=1}^{n} (\mathcal{P}_{j+1}\partial f - \mathcal{P}_j \partial f)(\tilde X_i)\t \big[ ( \mathbf{A- P)U|\Lambda|^{-1/2}}\ipq \big]_i \bigg| \\
  &+ \frac{1}{\sqrt{n}}\bigg|\sum_{i=1}^{n} (\mathcal{P}_0 \partial f)(\tilde X_i)\t \big[ ( \mathbf{A- P)U|\Lambda|^{-1/2}}\ipq \big]_i \bigg|.
\end{align*} 
We note that for fixed $j$, defining the term $\mathfrak{P}_j^f$ as the $n\times d$ matrix whose rows are $(\mathcal{P}_{j+1}\partial f - \mathcal{P}_j \partial f)(\tilde X_i)\t$, we have that
\begin{align*}
\frac{1}{\sqrt{n}} \sum_{i=1}^n \bigg|(\mathcal{P}_{j+1}\partial f - \mathcal{P}_j \partial f)(\tilde X_i)\t \big[ ( \mathbf{A- P)U|\Lambda|^{-1/2}} \big]_i \bigg| &= \frac{1}{\sqrt{n}}\bigg| \sum_{s=1}^d ((\mathfrak{P}_j^f) \t \big( (\mathbf{A - P}) \mathbf{U}_s \frac{- \mathbb{I}_{s > p} +  \mathbb{I}_{s \leq p}}{\sqrt{|\lambda_s|^{1/2}}}\big) \bigg|.
\end{align*}
We note that 
\begin{align*}
    \| (\mathfrak{P}_f^j)_s \| \leq \frac{3}{2} 2^{-j} M \sqrt{n}.
\end{align*}
Hence, for fixed $s \in \{1 \dots, d\}$ this is a linear combination of mean-zero random variables.  Therefore we have by Hoeffding's inequality that
\begin{align*}
    \p\bigg( ((\mathfrak{P}_j^f)_s \t \big( (\mathbf{A - P}) \mathbf{U}_s \frac{- \mathbb{I}_{s > p} +  \mathbb{I}_{s \leq p}}{\sqrt{|\lambda_s|^{1/2}}}\big) > t \bigg) \leq 2 \exp\bigg( -\frac{t^2}{C 2^{-2j} \lambda_d\inv} \bigg),
\end{align*}
for some constant $C$ depending on $M$.  Hence, by the union bound, we have that
\begin{align*}
    \p\bigg[ \frac{1}{\sqrt{n}} \bigg| \sum_{s=1}^{d}  ((\mathfrak{P}_j^f)_s \t \big( (\mathbf{A - P}) \mathbf{U}_s \frac{- \mathbb{I}_{s > p} +  \mathbb{I}_{s \leq p}}{\sqrt{|\lambda_s|^{1/2}}}\big) \bigg| > dt \bigg] &\leq 2d \exp \bigg( - \frac{t^2}{C 2^{-2j} |\lambda_d|\inv} \bigg),
\end{align*}
provided that $t$ is chosen appropriately. By another union bound over the set $S_j$, we have that
\begin{align*}
     \p\bigg[\sup_f \frac{1}{\sqrt{n}} \bigg| \sum_{i=1}^{n}  ((\mathfrak{P}_j^f) \t \big( (\mathbf{A - P}) \mathbf{U} |\Lambda|^{-1/2} \ipq\big)_i \bigg| > dt \bigg] &\leq 2d |S_j| \exp \bigg( - \frac{t^2}{C 2^{-2j} |\lambda_d|\inv} \bigg).
\end{align*}
We note that $|S_j| \leq (C2^{j})^d$ by using the bound on the covering number (e.g. Lemma 2.5 in \citet{van_de_geer_empirical_2009}).  Following the steps from equation A.5 to A.6 in \citet{tang_nonparametric_2017}, by rearranging the equation above, we have that for any $t_j >0$
\begin{align*}
    \p\bigg[ \sup_{f} \bigg| \frac{1}{\sqrt{n}}  \bigg| \sum_{i=1}^{n}  ((\mathfrak{P}_j^f) \t \big( (\mathbf{A - P}) \mathbf{U} |\Lambda|^{-1/2} \ipq\big)_i \bigg| > \eta_j \bigg] &\leq 2d\exp(-t^2_j),
\end{align*}
where $\eta_j = d\sqrt{C 2^{-2j} \lambda_d\inv(t_j^2 + \log|S_{j+1}|^2)}$.  Summing over $j$ and bounding the zeroth order term similarly, we have
\begin{align*}
      \p\bigg[ \sup_{f} | \frac{1}{\sqrt{n}} \tr\big(
    \big((\mathbf{A - P)U|\Lambda^{-1/2}}\ipq)[\mathbf{M}(\partial f)]\t \big) \geq \sum_{j=0}^{\infty} \tilde C \eta_{j}\bigg] \leq 2 d \sum_{j=0}^{\infty} \exp \left(-t_{j}^{2}\right).
\end{align*}
Taking $t_j^2 = 2 (\log(j) + \log(n))$, we have that
\begin{align*}
    \p\bigg[ \sup_{f} | \frac{1}{\sqrt{n}} \tr\big(
    \big((\mathbf{A - P)U|\Lambda^{-1/2}}\ipq)[\mathbf{M}(\partial f)]\t \big) \geq d \lambda_d^{-1/2} (C_1 \sqrt{\log (n)} + C_2)\bigg] \leq \frac{2d C_3}{n^2},
\end{align*}
for some constants $C_1$, $C_2$, and $C_3$.  Hence, combining all this, we see the linear term satisfies
\begin{align*}
    \sup_{f}| \zeta(f) | &\leq C\frac{ \sqrt{\log(n)}}{\sqrt{n\alpha_n}}
\end{align*}
with probability at least $1 - O(n^{-2})$.  We note that the hidden constants and the constants in the bound depend on the diameter of the set $\partial \mathcal{F}$ and the dimension $d$.  

If $(\mathbf{W_*^X})\t \hat X_i$ and $\tilde X_i$ are replaced with $\mathbf{W_*^X} \tilde X_i$ and $\hat X_i$ and respectively, the results continue to hold by noting that $\mathbf{W}_*^X$ has entries bounded by 1 and that appropriate quantities are invariant under orthogonal transformation.

Finally, if $\sqrt{n} \alpha_n \geq \log^{1+\eta}(n)$, then the right hand side still tends to zero with an additional factor of $\sqrt{\alpha_n}$ in the denominator, which is the final assertion of Lemma \ref{lem:fclt}.  
\end{proof}


\begin{proof}[Proof of Lemma \ref{cor:fclt_estimate}]

First, by Lemma \ref{sparsity_lemma}, we have that $\sqrt{\alpha_n}/\sqrt{\hat \alpha_n} \to 1$ in probability (and almost surely).  In the proof of Lemma \ref{lem:fclt}, we have shown that

\begin{align*}
    \sup_{f} \frac{\sqrt{\alpha_n}}{\sqrt{n}} \sum_{i=1}^{n}(\partial f)(\tilde X_i) \frac{(\hat X_i - \mathbf{W_*^X}\tilde X_i)}{\sqrt{\alpha_n}} &\to 0; \\
      \sup_{f} \frac{\sqrt{\alpha_n}}{2\sqrt n} \sum_{i}  \frac{(\hat X_i - \mathbf{W_*^X}\tilde X_i )\t (\partial^2 f)(X_i^*) (\hat X_i -\mathbf{W_*^X} \tilde X_i) }{\alpha_n} &\to 0
\end{align*}
almost surely.  Therefore, we can replace $\sqrt{\alpha_n}$ by $\sqrt{\hat \alpha_n}$ and apply Slutsky's Theorem to conclude that
\begin{align*}
     \sup_{f} \frac{\sqrt{\alpha_n}}{\sqrt{n}} \sum_{i=1}^{n}(\partial f)(\tilde X_i) \frac{(\hat X_i}{\sqrt{\hat\alpha_n}} - \frac{\mathbf{W_*^X}\tilde X_i)}{\sqrt{\alpha_n}} &\to 0;\\
      \sup_{f} \frac{\sqrt{\alpha_n}}{2\sqrt n} \sum_{i}  \frac{(\hat X_i}{\sqrt{\hat\alpha_n}} - \frac{\mathbf{W_*^X}\tilde X_i )\t}{\sqrt{\alpha_n}} (\partial^2 f)(X_i^*) \frac{(\hat X_i}{\sqrt{\hat\alpha_n}} - \frac{\mathbf{W_*^X}\tilde X_i )}{\sqrt{\alpha_n}} &\to 0
\end{align*}
in probability. 
\end{proof}




\subsection{Proofs of Auxiliary Lemmas} \label{sec:proofs_lems}
In this section we prove the additional technical lemmas; namely Lemmas \ref{lem2}, \ref{lem:qtilde}, and \ref{sparsity_lemma}.

\subsubsection{Proofs of Lemmas \ref{lem2} and \ref{lem:qtilde}}
This section contains various results associated to the approximation of the matrix $\mathbf{Q_X}$ to its limiting value. 

\begin{proof}[Proof of Lemma \ref{lem2}]
By Corollary 2 of \citet{agterberg_two_2020}, there exists a sequence of block-orthogonal matrices $\mathbf{W_X}$ such that $\|\mathbf{Q_X - W_X \tilde Q}\|\to 0$.  When $F_X = F_Y \circ\mathbf{T}$, it holds that only replacing $\mathbf{Q_X}$ with $\mathbf{Q_Y T\inv}$, to see that 
\begin{align*}
    \|\mathbf{Q_YT\inv - W_Y \tilde Q }\| \to 0.
\end{align*}
Note that the (block)-orthogonal matrix above need not necessarily be the same due to nonidentifiability of the eigenvectors.  Hence, 
\begin{align*}
    \mathbf{W_Y\t Q_Y T\inv - W_X\t Q_X} \to 0,
\end{align*}
which in particular implies that both terms are tending towards $\mathbf{\tilde Q}$ almost surely.
\end{proof}

\begin{proof}[Proof of Lemma \ref{lem:qtilde}]
Define $\Delta = \E( XX\t)$, and let $\mathbf{\tilde V}$  and $\mathbf{V}$ be the orthogonal matrices in the eigendecomposition of $\bigg(\frac{\mathbf{X\t X}}{n} \bigg)^{1/2} \ipq \bigg(\frac{\mathbf{X\t X}}{n} \bigg)^{1/2}$ and $\Delta^{1/2} \ipq \Delta^{1/2}$ respectively.  Let $\mathbf{\Lambda}$ be the eigenvalues of $\mathbf{X}\ipq \mathbf{X}\t$ and let $\mathbf{\tilde \Lambda}$ be the eigenvalues of $\Delta^{1/2} \ipq \Delta^{1/2}$.  We will first show that with probability at least $1-n^{-2}$, that the following hold simultaneously: 
\begin{align}
    \|\Delta^{1/2} - \left(\frac{\mathbf{X\t X}}{n}\right)^{1/2}\| & = O\left( \frac{\sqrt{\log(n)}}{\sqrt{n}} \right); \label{bound1}\\
    \|\left(\frac{\mathbf{|\Lambda|}}{n}\right)^{-1/2} - \mathbf{|\tilde \Lambda|}^{-1/2} \| & = O\left( \frac{\sqrt{\log(n)}}{\sqrt{n}} \right); \label{bound2} \\
    \|\mathbf{V - \tilde V W}_n\t \|  &= O\left( \frac{\sqrt{\log(n)}}{\sqrt{n}} \right); \label{bound3},
\end{align}
where $\mathbf{W}_n \in \mathbb{O}(p,q)\cap\mathbb{O}(d)$ will be defined later.

First, by Theorem 6.2 in \citet{higham_functions_2008}, for $\mathbf{A}$ and $\mathbf{B}$ positive-definite matrices, we have that
\begin{align}
    \| \mathbf{A}^{1/2} - \mathbf{B}^{1/2} \| \leq \frac{1}{\lambda_{\min}(\mathbf{A})^{1/2} + \lambda_{\min}(\mathbf{B})^{1/2}} \|\mathbf{A} - \mathbf{B}\|. \label{higham}
\end{align}
The result above is stated in \citet{higham_functions_2008} for matrices with distinct eigenvalues.  However, if $\mathbf{A}$ and $\mathbf{B}$ do not have distinct eigenvalues, the result holds by adding small values of $\eps$ to each of the repeated eigenvalues, applying the result for the new slightly perturbed matrices, and then taking the limit as $\eps \to 0$.

By the Law of Large Numbers, $\frac{\mathbf{X\t X}}{n}$ is almost surely positive definite whenever $\Delta$ is.  Applying the above inequality to $\Delta$ and $\frac{\mathbf{X\t X}}{n}$, we see that
\begin{align*}
    \| \Delta^{1/2} - (\frac{\mathbf{X\t X}}{n})^{1/2} \| &\leq \frac{1}{\lambda_{\min}(\Delta)^{1/2} + \lambda_{\min}(\mathbf{\frac{\mathbf{X\t X}}{n}})^{1/2}} \|\Delta - \frac{\mathbf{X\t X}}{n}\| \\
    &\leq \frac{1}{\lambda_{\min}(\Delta)^{1/2}} \|\Delta - \frac{\mathbf{X\t X}}{n}\|.
\end{align*}
By Theorem 6.5 in \citet{wainwright_high-dimensional_2019} (which applies to second moment matrices by shifting by the mean), we have that for some constants $c_1$, $c_2$ and $c_3$, 
\begin{align*}
    \|\Delta - \frac{\mathbf{X\t X}}{n}\| \leq \| \Delta\| c_1 \left\{ \sqrt{\frac{d}{n}} + \frac{d}{n} \right\} + \delta 
\end{align*}
with probability at least $1 - c_3\exp( -c_2 n \min(\delta,\delta^2)).$  Taking $\delta = \sqrt{\frac{2\log(n)}{c_3 n}}$, we see that 
\begin{align*}
    \|\Delta - \frac{\mathbf{X\t X}}{n}\| = O\left(\| \Delta\| \sqrt{\frac{d}{n}} \sqrt{\log(n)}\right)
\end{align*}
with probability at least $1 - O(n^{-2})$.  Putting it all together and noting $\mathbf{\|\Delta\|}$ and $d$ are constants in $n$, we arrive at
\begin{align*}
    \|\Delta^{1/2} - \frac{\mathbf{X\t X}}{n}^{1/2} \| &\leq  O\left( \frac{\sqrt{\log(n)}}{\sqrt{n}} \right).  
\end{align*}
which proves \eqref{bound1}.  

For \eqref{bound2}, we note that $\mathbf{\Lambda}$ and $\mathbf{\tilde \Lambda}/n$ are the eigenvalues of the matrix $\Delta^{1/2} \ipq \Delta^{1/2}$ and $\bigg(\frac{\mathbf{X\t X}}{n}\bigg)^{1/2} \ipq \bigg( \frac{\mathbf{X\t X}}{n}\bigg)^{1/2} $ respectively.  To see the latter, we note that $\mathbf{X}\ipq \mathbf{X}\t$ has the same nonzero eigenvalues as $(\mathbf{X\t X})^{1/2} \ipq (\mathbf{X\t X})^{1/2}$ by similarity.  By Weyl's inequality, we have that
\begin{align*}
    |\lambda_i - \tilde \lambda_i| &\leq \|\frac{\mathbf{X\t X}}{n}^{1/2} \ipq \frac{\mathbf{X\t X}}{n}^{1/2}  - \Delta^{1/2} \ipq \Delta^{1/2} \| \\
    &\leq \|\frac{\mathbf{X\t X}}{n}^{1/2} \ipq \Delta^{1/2} - \Delta^{1/2} \ipq \Delta^{1/2} \| +  \|\frac{\mathbf{X\t X}}{n}^{1/2} \ipq \Delta^{1/2} - \frac{\mathbf{X\t X}}{n}^{1/2} \ipq \frac{\mathbf{X\t X}}{n}^{1/2} \| \\
    &\leq \|\frac{\mathbf{X\t X}}{n}^{1/2} - \Delta^{1/2} \| \| \Delta^{1/2}   \| + \|\frac{\mathbf{X\t X}}{n}^{1/2}  \|  \| \Delta^{1/2} - \frac{\mathbf{X\t X}}{n}^{1/2} \| \\
    &= O\left( \frac{\sqrt{\log(n)}}{\sqrt{n}} \right)
\end{align*}
by \eqref{bound1} and the fact that $\frac{\mathbf{X\t X}}{n}$ can be bounded above by a constant.  Now, define the function $g(\lambda) := \frac{1}{|\lambda|^{1/2}}$.  Since $\Delta$ is full-rank and $\ipq$ is orthogonal, so is $\Delta \ipq$ and hence $\Delta^{1/2} \ipq \Delta^{1/2}$ by similarity.  Therefore, there exists an $\eps > 0$ such that all eigenvalues of $\Delta^{1/2} \ipq \Delta^{1/2}$ are outside the range $(-\eps, \eps)$, and hence so are those of $(\frac{\mathbf{X\t X}}{n})^{1/2} \ipq (\frac{\mathbf{X\t X}}{n})^{1/2}$ for $n$ sufficiently large with probability at least $1- n^{-2}$. The function $g(\lambda)$ is differentiable outside of $(-\eps,\eps)$, and hence by the Delta method applied to each eigenvalue individually, $g(\lambda_i) - g(\tilde \lambda_i) = O\left( \frac{\sqrt{\log(n)}}{\sqrt{n}} \right),$ for $n$ sufficiently large with probability $1- O(n^{-2})$, where the hidden constant depends on $g'(\tilde \lambda_i)$.  This proves \eqref{bound2}.

For \eqref{bound3}, we simply apply the Davis-Kahan Theorem to the eigenvectors associated to each eigenvalue.  First, consider $i$ such that $\tilde\lambda_i$ is unique.  By \eqref{bound2}, for $n$ sufficiently large, the eigenvalues outside of $i$ are separated from each other and we can apply the Davis-Kahan Theorem.  We see that
\begin{align*}
    \|\mathbf{v}_i - \mathbf{\tilde v}_i\| &\leq C\frac{\|\frac{\mathbf{X\t X}}{n}^{1/2} \ipq \frac{\mathbf{X\t X}}{n}^{1/2}  - \Delta^{1/2} \ipq \Delta^{1/2} \| }{\delta_{gap}^{(i)}} \\
    &= O\left( \frac{\sqrt{\log(n)}}{\sqrt{n}} \right),
\end{align*}
where the hidden constant depends on $\delta_{gap}^{(i)} := \min(\lambda_{i+1}-\lambda_i, \lambda_i - \lambda_{i-1})$, though this is a deterministic value depending only on $\Delta$.  For non-unique eigenvalues, we apply the same argument, only with an orthogonal matrix attached to the $\mathbf{V}_i$.  We note that the orthogonal matrix is chosen only for each group of repeated eigenvalues, and hence the combined matrix is block-orthogonal.  

We are now ready to prove the result in Equation \ref{bound4}.  By \citet{agterberg_two_2020}, we can write
\begin{align*}
    \mathbf{Q_X} &= \left(\frac{|\mathbf{\Lambda}|}{n}\right)^{-1/2} \mathbf{V}\t \left( \frac{\mathbf{X\t X}}{n}\right)^{1/2} \\
    \mathbf{\tilde Q} &= \left(|\mathbf{\tilde \Lambda}|\right)^{-1/2} \mathbf{\tilde V}\t \left( \Delta\right)^{1/2}.
\end{align*}
The result follows by using \eqref{bound1}, \eqref{bound2}, and \eqref{bound3} together and adding and subtracting terms and noting that by construction the orthogonal matrix from \eqref{bound3} commutes with $\left(|\mathbf{\tilde \Lambda}|\right)^{-1/2}$.  

We note that the matrix $\mathbf{Q_X}$ is invariant to the sparsity factor, since the eigenvectors of $\alpha_n\mathbf{P} =\alpha_n\mathbf{X}\ipq \mathbf{X\t}$ are the same as those of $\mathbf{P}$ and the eigenvalues are scaled by $\alpha_n$ so that if $\mathbf{D}$ are the eigenvalues of $\alpha_n\mathbf{ P}$ then
\begin{align*}
\mathbf{U_X |D|^{1/2}} &= \sqrt{\alpha_n} \mathbf{U_X |\Lambda_X|^{1/2}} \\
&= \sqrt{\alpha_n} \mathbf{X Q_X\inv},
\end{align*}
showing that the matrix $\mathbf{Q_X}$ depends only on the matrix $\mathbf{P}$ and not the sparsity component $\alpha_n$.  
\end{proof}

\subsubsection{Proof of Lemma \ref{sparsity_lemma}}\label{sec:proofs_aux}
\begin{proof}[Proof of Lemma \ref{sparsity_lemma}]
First, for any fixed matrix $\mathbf{X}$, we have that the $\mathbf{A}_{ij}$'s are independent random variables, and recall that
\begin{align*}
    \hat \alpha_n = \frac{1}{{{n}\choose 2}} \sum_{i < j} A_{ij}.
\end{align*}
Define $\theta_n := \frac{\alpha_n}{{{n}\choose 2}}\sum_{i< j} \mathbf{P}_{ij}$.  Then $\E( \hat \alpha_n | \mathbf{X}) = \theta_n$.  
Therefore, we have that
\begin{align*}
      \p\bigg( | \hat \alpha_n - \alpha_n | > 2t \bigg) \leq \p ( | \hat \alpha_n - \theta_n | > t) + \p( | \theta_n/\alpha_n - 1 | > t /\alpha_n ).
\end{align*}
For the first, term, we note that by applying Hoeffding's inequality, we see that
\begin{align*}
    \p \bigg( |\hat\alpha_n - \theta_n | \geq t \bigg) &\leq 2 \exp\bigg( -2 {{n}\choose 2} t^2 \bigg) \\
    &= 2 \exp \bigg( -( n^2 - n)t^2 \bigg)\\
    &\leq 2 \exp \bigg( -\frac{n^2 t^2}{2} \bigg).
\end{align*}
For the second term, note that 
$\frac{1}{{{n}\choose 2}} \sum_{i< j} X_i\t \ipq X_j $ is a $U$-statistic with expected value 1/2.  Hoeffding's inequality for $U$-statistics (e.g. Example 2.23 in \citet{wainwright_high-dimensional_2019}) shows that
\begin{align*}
    \p\bigg( | \frac{1}{{{n}\choose 2}} \sum_{i<j} X_i\t \ipq X_j  - 1 | \geq t \bigg) &\leq 2 \exp( -nt^2 / 8).
\end{align*}
Hence, we see that 
\begin{align*}
    \p\bigg( | \hat \alpha_n - \alpha_n | > 2t \bigg) &\leq 2 \exp \bigg( -\frac{n^2 t^2}{2} \bigg) + 2 \exp( -\frac{nt^2}{8\alpha_n^2})
\end{align*}
Set $t = 4  \sqrt{\frac{\alpha_n\log(n)}{n}}$.  Then recalling that for some $C > 0$ $1\geq \alpha_n \geq C \log^4(n)/n$, we have
\begin{align*}
    \p\bigg( | \hat \alpha_n - \alpha_n | > 8  \sqrt{ \frac{\alpha_n\log(n)}{n}} \bigg) &\leq  2 \exp \bigg( -8 n \alpha_n \log(n) \bigg) + 2\exp\bigg( -2\log(n) \alpha_n\inv\bigg) \\
    &\leq 2 \exp \bigg( -8C \log^5(n) \bigg) + 2n^{-2} \\
    &\leq 4n^{-2}.
\end{align*}

Now, define the event $\mathcal{A} := \{|\hat \alpha_n -\alpha_n| \leq 4  \sqrt{\frac{\alpha_n\log(n)}{n}} \}$.  On $\mathcal{A}$, since $\alpha_n \geq C \frac{\log^4(n)}{n}$,
\begin{align*}
    \frac{|\hat \alpha_n - \alpha_n|}{\alpha_n} \leq  \frac{4}{\log^{1.5}(n)},
\end{align*}
which is small for $n$ sufficiently large.  Hence, by Taylor expansion, we have that
\begin{align*}
    \frac{1}{\sqrt{\hat \alpha_n}} &= \frac{1}{\sqrt{ \alpha_n + (\hat \alpha_n - \alpha_n)}} \\
    &= \frac{1}{\sqrt{\alpha_n}} \bigg( \sqrt{ 1 + \frac{\hat\alpha_n - \alpha_n}{\alpha_n}} \bigg)\inv \\
    &= \frac{1}{\sqrt{\alpha_n}} \bigg( 1 + \frac{1}{2}\frac{\hat\alpha_n - \alpha_n}{\alpha_n} + O\bigg( \frac{\hat\alpha_n - \alpha_n}{\alpha_n}\bigg)^2 \bigg).
\end{align*}
By the previous observations, we have that this is equal to
\begin{align*}
    \frac{1}{\sqrt{\alpha_n}} \bigg( 1 + O\bigg[ \sqrt{\frac{\log(n)}{n\alpha_n}}\bigg]\bigg).
\end{align*}
for $n$ sufficiently large.  
\end{proof}

\section{More on the Discussion in Remark \ref{sec:close_eigs_discussion}}\label{sec:close_eigs}
In this section, for $f$ and $g$ two functions of $n$, we write $f(n) \ll g(n)$ if $f(n)/g(n) \to 0$ as $n$ tends to infinity.

The eigenvalues of $\alpha_n \mathbf{X}\mathbf{X}\t$ are the same as those of $\alpha_n \mathbf{X}\t  \mathbf{X}$, and as $n \to \infty$, the matrix $\frac{1}{n}\mathbf{X\t X}$ is converging almost surely to $\E(XX\t)$.  Therefore, as $n \to \infty$,
\begin{align*}
    \frac{1}{n\alpha_n}\big(\lambda_{i}(\mathbf{P}) - \lambda_{i+1}(\mathbf{P}) \big) \to \delta_{i},
\end{align*}
where $\delta_i = \lambda_{i}(\E(XX\t)) - \lambda_{i+1}(\E(XX\t))$.   We have
\begin{align*}
    \| \mathbf{W}_* - \mathbf{I}\|_F &\leq  \| \mathbf{W}_* - \mathbf{U_A\t U_P}\|_F + \| \mathbf{U_A\t U_P}- \mathbf{I}\|_F.
\end{align*}
The first term can be bounded directly using the Davis-Kahan Theorem as
\begin{align*}
     \| \mathbf{W}_* - \mathbf{U_A\t U_P}\|_F = O\bigg( \frac{ \|\mathbf{A - P}\|}{\lambda_d(\mathbf{P})} \bigg)^2 &= O( (n\alpha_n)\inv).
\end{align*}
For the second term, following the analysis on Page 24 of \citet{rubindelanchy_statistical_2022}, we have that for $i \neq j$,
\begin{align*}
     (\mathbf{U_A\t U_P} )_{ij} &=  -\frac{(\mathbf{U_A})_i\t (\mathbf{A - P}) (\mathbf{U_P})_j}{\lambda_j(\mathbf{P})-\lambda_i(\mathbf{A})}.
\end{align*}
By previous results on eigenvalue concentration (e.g. \citet{eldridge_unperturbed_2018,orourke_random_2018,cape_kato--temple_2017}), we have that
\begin{align*}
|\lambda_i(\mathbf{A}) - \lambda_i(\mathbf{P})| \leq C \log(n)
\end{align*}
with high probability.  Hence, the above bound can be written as
\begin{align*}
\frac{(\mathbf{U_A})_i\t (\mathbf{A - P}) (\mathbf{U_P})_j}{\lambda_j(\mathbf{P})-\lambda_i(\mathbf{A})} &= \frac{(\mathbf{U_A})_i\t (\mathbf{A - P}) (\mathbf{U_P})_j}{\lambda_j(\mathbf{P})-\lambda_i(\mathbf{P}) \pm C\log(n)}
\end{align*}
Moreover, by \citet{koltchinskii_random_2000}, we have that $\frac{\lambda_i(\mathbf{P})}{n\alpha_n} - \lambda_i(\E(XX\t)) = O_{\p}(n^{-1/2})$.  Hence, we can further simplify the bound to when $i = j+1$
\begin{align*}
    \frac{(\mathbf{U_A})_i\t (\mathbf{A - P}) (\mathbf{U_P})_j}{\lambda_j(\mathbf{P})-\lambda_i(\mathbf{P}) \pm C\log(n)} &= \frac{(\mathbf{U_A})_i\t (\mathbf{A - P}) (\mathbf{U_P})_j}{n\alpha_n \lambda_j(\E(XX\t)) - n\alpha_n\lambda_i(\E(XX\t)) \pm O_{\p}(\sqrt{n}\alpha_n)} \\
    &= \frac{(\mathbf{U_A})_i\t (\mathbf{A - P}) (\mathbf{U_P})_j}{n\alpha_n \delta   \pm O_{\p}(\sqrt{n}\alpha_n) \pm O(\log(n))}.
\end{align*}
Expanding the numerator, we see that we can write this via
\begin{align*}
    \bigg( n\alpha_n \delta &  \pm O_{\p}(\sqrt{n}\alpha_n) \pm O(\log(n)) \bigg)\inv \bigg((\mathbf{U_A})_i\t (\mathbf{A - P}) (\mathbf{U_P})_j\bigg) \\
    &=  \bigg( n\alpha_n \delta   \pm O_{\p}(\sqrt{n}\alpha_n) \pm O(\log(n)) \bigg)\inv\bigg[ (\mathbf{U_A})_i\t (\mathbf{U_P U_P\t})  (\mathbf{A - P}) (\mathbf{U_P})_j \bigg] \\&\quad +\bigg( n\alpha_n \delta   \pm O_{\p}(\sqrt{n}\alpha_n) \pm O(\log(n)) \bigg)\inv\bigg[ (\mathbf{U_A})_i\t (\mathbf{I} - \mathbf{U_P U_P\t})  (\mathbf{A - P}) (\mathbf{U_P})_j \bigg].
\end{align*}
The first term $\mathbf{U_P\t (A-P)(U_P})_j$ is a sum of independent random variables, so Hoeffding's inequality reveals that it is of order at most $\log(n)$ with high probability. The second term can be bounded via the Davis-Kahan theorem as
\begin{align*}
  \bigg[ (\mathbf{U_A})_i\t (\mathbf{I} - \mathbf{U_P U_P\t})  (\mathbf{A - P}) (\mathbf{U_P})_j \bigg] &\leq   \| (\mathbf{U_A})_i\t (\mathbf{I} - \mathbf{U_P U_P\t})\| \|  (\mathbf{A - P})  \| \\
  &\leq \frac{C \|\mathbf{A - P}\|}{\lambda_d(\mathbf{P})} \| (\mathbf{A - P})  \| \\
  &= O(1).
\end{align*}
Putting it together, we arrive at
\begin{align*}
     \| \mathbf{U_A\t U_P}- \mathbf{I}\|_F &= O\bigg( \frac{\log(n)}{n\alpha_n\delta} \bigg),
\end{align*}
where $\delta = \min_{i} \delta_i$, provided $\sqrt{n}\alpha_n \ll n\alpha_n \delta$ and $\log(n)\ll n\alpha_n \delta$.  

From an asymptotic standpoint, the term $\delta$ in the denominator makes little difference as it is a constant, and $n\alpha_n \to \infty$.  However, for finite $n$, depending on $\alpha_n$ and $n$, even if $n\alpha_n \gg \log^4(n)$, the constant may depend on $\delta$.  Therefore, for any fixed model, though the rate of convergence depends on $n\alpha_n$, for all practical purposes it also depends on the eigengap $\delta$.

\bibliographystyle{plainnat_JA} 
\bibliography{nonpar_grdrpg.bib,gretton.bib}

\end{document}